\newtheorem{theorem}{Theorem}
\newtheorem{remark}{Remark}
\newtheorem{corollary}{Corollary}
\newtheorem{lemma}{Lemma}
\def\R{{\mathbb{R}}}
\def\P{{\mathbb{P}}}
\def\E{{\mathbb{E}}}
\def\F{{\mathcal{F}}}
\def\C{{\mathcal{C}}}
\def\B{{\mathcal{B}}}
\def\N{{\mathfrak{N}}}
\def\X{{\mathcal{X}}}
\def\I{{\mathcal{I}}}
\def\eqd{\,{\buildrel d \over =}\,}
\def\nll{{a}}
\def\nuu{{b}}
\def\xll{{c}}
\def\xuu{{d}}
\def\arrowd{\,{\buildrel d \over \rightarrow}\,}
\def\arrowwh{\,{\buildrel w_{H^1} \over \rightarrow}\,}
\def\arrowwl2{\,{\buildrel w_{L^2} \over \rightarrow}\,}
\def\arrowl2{\,{\buildrel L^2 \over \rightarrow}\,}
\DeclareMathOperator{\am}{argmax}
\DeclareMathOperator{\nus}{nats}
\begin{document}
%
\title{On the Maximum Entropy of a Sum with Constraints and Channel Capacity Applications}
%
%
%

\author{Francisco~J.~Piera$^\dag$
\thanks{$^\dag$Ph.D., Purdue University.}
}

\maketitle

\begin{abstract}
We study the maximum achievable differential entropy at the output of a system assigning to each input $X$ the sum $X+N$, with $N$ a given noise with probability law absolutely continuous with respect to the Lebesgue measure and where the input and the noise are allowed to be dependent. We consider fairly general average cost constraints in the input, as well as amplitude constraints. Under appropriate mild regularity requirements on the noise density, it is shown that the corresponding search for the optimum may be performed over joint distributions for the input and the noise concentrated in lower dimensional geometrical objects represented by graphs of sufficiently regular functions in the associated noise-input plane, and a full characterization for the optimal curve is provided in terms of the equation and boundary conditions it satisfies. The results are then applied to correspondingly characterize the independent input and noise case, so providing bounds for channel capacity. Analysis of achievable bounds and associated capacity-achieving input distributions is also provided. Several examples including commonly considered noise distributions, cost functions and amplitude ranges are worked out in detail. General conditions for the existence of a maximum achievable entropy at the output are provided for such a system, as well as for the independent input and noise case, so establishing conditions for capacity to be achievable. The laws of such capacity-achieving input distributions are also uniquely specified. A general approximation to capacity scheme is also provided, showing that in the not achievable case the corresponding noise can be written as the limit of a sequence of noise random variables with associated channels being capacity achievable, and whose capacities converge to the capacity of the original channel. The relationship between our results for this input and noise dependent setting and the notion of feedback capacity are also explored.

\end{abstract}

\begin{IEEEkeywords}
Sum of random variables, maximum entropy, average cost constraints, amplitude constraints, additive noise channels, channel capacity, calculus of variations.
\end{IEEEkeywords}

%
\IEEEpeerreviewmaketitle

\section{Introduction}
\label{intro}

\IEEEPARstart{P}{ractical} communication systems cannot usually
transmit arbitrary channel inputs, commonly due to various physical restrictions imposed. An illustrating example in this context is
given by considering physical amplifiers, which present a
limitation on the maximum power or signal amplitudes they can tolerate.
This and other phenomena naturally results in studying the
capacity of a communication channel under various type of input
constraints, most commonly amplitude and power constraints. For example, the problem of finding capacity and characterizing capacity-achieving
distributions in additive noise channels with amplitude and/or average cost
constraints has been considered in the literature
in several contexts, both in the Gaussian and non-Gaussian noise case; see for example
\cite{AD2000,TCSHFK2003,AT2004,JB1979} and references therein. The
problem of computing and characterizing the capacity-achieving
distribution in additive noise channels with inputs constrained to lie in a given fixed discrete signaling set
is considered for example in
\cite{AFRM2007}. Other types of channels with input constraints have also been considered in the literature, such as conditionally Gaussian channels and Rayleigh-fading channels; see for example \cite{TCSHFK2005,IAMTSS2001,WZJL2006} and references therein. The case of channels with partially unknown noise has been considered in \cite{CBIF1996}.

The problem of capacity and capacity-achieving distribution computation is in general very hard to solve in additive noise channels with arbitrary noise distributions, being in fact rarely solvable explicitly. This has motivated the consideration of other related concepts such as for example the notion of \textit{capacity per unit cost}, allowing for explicit formulae in some cases with cost constrained input channels, as for instance additive and multiplicative Gaussian noise channels and additive non-Gaussian noise channels, where no explicit computation of channel capacity is possible; see \cite{SV1990}.

As widely known, the problem of computing capacity in an additive noise channel with input constraints accounts for finding the maximum possible entropy at its output, with the maximization being carried over the set of all input probability distributions representing inputs satisfying the imposed constraints and being stochastically independent of the noise. This maximization is a particular case of a maximization taken over all joint probability distributions for the input and the noise, in the sense that the last one provides an upper bound for the former by allowing for the input and the noise to be dependent, with of course the input marginal distribution representing an input satisfying the imposed constraints, and the noise marginal distribution being kept fixed to the given one. The problem of maximizing the output entropy under no independence conditions between the input and the noise has not received too much attention in the literature though. Along this line, one interesting related work can be found in \cite{TCZZ1994}, where the authors consider the problem of finding the maximum entropy of a sum of two dependent random variables subject to the constraint of both marginal densities being equal to a fixed given one.

Needless to say, the maximization of the corresponding entropy at the output without independence restrictions between the input and the noise, and satisfying of course the imposed constraints, confront us with a maximization over a much larger searching set for the optimum than in the former independent case, the search being now performed over all joint probability distributions for the input and the noise and not only over those factorizing as the product of their one-dimensional marginals. However, we will show in the paper that under appropriate mild regularity requirements on the noise density, a density that it is assumed to exist, the maximization of the entropy at the output in the jointly distributed input and noise case reduces to performing the corresponding optimal search over joint distributions concentrated in a lower dimensional region, specifically a curve expressible as the graph of a sufficiently regular function relating the input and the noise. We will characterize such a relationship, giving the corresponding equations that the associated function satisfies and providing conditions for its existence. This will lead in particular to corresponding bounds for the maximum entropy at the output in the independent input and noise case, so providing bounds for channel capacity. Conditions for attainable bounds will also be given, as well as the corresponding analysis for the associated capacity-achieving input distributions.

We consider general average cost constraints and amplitude constraints on the input. Our proof techniques for the provided maximum entropy results will mainly rely on tools from the calculus of variations and functional analysis. As widely known, a direct approach to channel capacity computation via calculus of variations is in general difficult, among other reasons because of the superposition of the input and noise distributions that results in a convolution in the independent case. In addition to provide results for the important case where dependency is allowed, our approach will serve to characterize the independent case by precisely avoiding those sort of expressions.

In addition to providing a full characterization for the optimum entropy at the output, both in the dependent and independent input and noise cases, we will also provide general conditions for the optimums to exist, further identifying in the independent case, conditions for the optimum to be achievable, that is, conditions for the existence of a corresponding capacity-achieving input distribution. Such distributions will be fully identified when they exist. In passing, a bound on how much entropy at the output may be gained when going from the independent input and noise case to the fully dependent one will also be provided, so providing a bound for the gain of entropy at the output by the use of feedback. Along this line, the relationship between our results for the input and noise dependent setting and the notion of feedback capacity will also be explored.

The rigorous mathematical setting and a precise discussion of the previous statements are presented in the next sections. Though similar techniques can in principle be used as a general approach to a multi-dimensional setting, for sake of simplicity and clarity in the exposition we will consider a one-dimensional one, where the main ideas can be presented without being obscured by pure technical issues.

The organization of the paper is as follows. In Section \ref{setting} we introduce the general problem to be considered, namely the problem of characterizing the maximum achievable entropy at the output of a system assigning to each input random variable the sum of such input plus a given noise random variable, the input and the noise random variables being allowed to be dependent and the input satisfying, in general, both amplitude and average cost constraints. In Section \ref{main} we provide a full characterization of such an optimum, identifying all the equations and boundary conditions it satisfies when it exists. In Section \ref{main2} we then consider the application of the results to the independent input and noise case, so characterizing the associated channel capacity and the existence of a capacity-achieving input distribution. In Section \ref{examples} we work out several examples of interest illustrating the usefulness of our results characterizing the optimum. In Section \ref{existence} we provide general conditions for the previous optimums to exist, so identifying conditions for the maximum entropy at the output to be attainable in both the dependent and independent signal-noise cases, and also fully characterizing the corresponding capacity-achieving input distributions in the independent case. Finally, in Section \ref{generalcapacity} we consider the relationship between our results for the input and noise dependent setting and the notion of feedback capacity.

\section{General Setting and Some Notation}
\label{setting}

Let $(\Omega,\F,\P)$ be a given probability space. For $\nll,\nuu\in\R^*\doteq\R\cup\{-\infty,+\infty\}$ with $\nll<\nuu$, we write
$$
\N(\nll,\nuu)
$$
for the class of all $\overline{(\nll,\nuu)}$-valued random variables (RVs) $N$ in $(\Omega,\F,\P)$, the overline denoting closure (in $\R$), with
$$
\E|N|<\infty
$$
and probability law $\Lambda_N$ being absolutely continuous (a.c.) with respect to (w.r.t.) Lebesgue measure $\ell$ with density
$$
p_N\doteq\frac{d\Lambda_N}{d\ell},
$$
the Radon-Nikod\'ym derivative of $\Lambda_N$ w.r.t. $\ell$. It is assumed that $p_N\in\C^1(\nll,\nuu)$, the class of continuously differentiable functions\footnote{Troughout, $\C^k(A)$ will denote the class of $k$-times continuously differentiable functions over the domain $A$.} in $(\nll,\nuu)$, that $p_N>0$ everywhere in $(\nll,\nuu)$ and that the differential entropy of $N$,
$$
h(N)\doteq-\int p_N(n)\ln p_N(n)\ell(dn),
$$
is well defined (as a number in $\R$), i.e.,
\begin{equation}\label{fe}
\int p_N(n)|\ln p_N(n)|\ell(dn)<\infty.
\end{equation}
Note we take for convenience in computations and without loss of generality (w.l.o.g.) the logarithm in the definition of differential entropy as a natural logarithm, i.e., w.r.t. to the base $e$ (and therefore entropy is measured in nats), and assumed as usual the convention $0(\pm\infty)=0$ and $\ln 0=-\infty$. Moreover, for sake of definiteness we also set $\ln x=-\infty$ for $x<0$, whenever necessary in a Lebesgue null set.

Let $N\in\N(\nll,\nuu)$, $G:\R\rightarrow\R$ a given function in $\C^1(-\infty,\infty)$, $G$ convex with derivative $G'\neq0$ $\ell$-almost everywhere, and $\beta\in G(\R)$, the image set of $G$. Throughout we will consider real-valued RVs $X$ in $(\Omega,\F,\P)$ jointly distributed with $N$, such that
$$
\E|X|<\infty\;\;\;\text{and}\;\;\;\E|G(X)|<\infty,
$$
and satisfying the constraint
\begin{equation}\label{acc}
\E G(X)\leq\beta
\end{equation}
with $\E$ denoting expectation, i.e.,
$$
\E |X|\doteq\int |X(\omega)|\P(d\omega)=\int |x|\Lambda_X(dx)
$$
and similarly for the others, and where $\Lambda_X$ denotes the corresponding marginal probability law for $X$ extracted from the joint law $\Lambda_{X,N}$
$$
\Lambda_X(B)=\Lambda_{X,N}(B,\R)=\Lambda_{X,N}(B,\overline{(\nll,\nuu)}),\;\;\;B\in\B(\R),
$$
with $\B(\R)$ the Borel sigma-algebra in $\R$. (Note $\Lambda_X$ is not assumed to be a.c. w.r.t. $\ell$, i.e., it is not assumed to have a density w.r.t. Lebesgue measure.)

Since $G$ is convex with derivative $G'\neq0$ $\ell$-almost everywhere, the image set $G(\R)$ is an interval with non-empty interior\footnote{The condition $G'\neq0$ $\ell$-almost everywhere in particular precludes the trivial case of G being constant, where the constraint plays no role whatsoever. It also avoids the degenerate case where $G$ could take upon a minimum in an interval of positive Lebesgue measure, so being constant in that interval.}. In fact, by setting
$$
\alpha_0\doteq\inf_{x\in\R}G(x)\in[-\infty,\infty),
$$
the interior of $G(\R)$ is the open interval $(\alpha_0,\infty)$. We set
$$
\I_G\doteq(\alpha_0,\infty)
$$
and, in order to avoid trivial or degenerate cases when it comes to the constraint,
$$
\E G(X)\leq\beta,
$$
consider from now on
$$
\beta\in\I_G.
$$

Whenever the resulting sum
$$
Y\doteq X+N
$$
has a probability law $\Lambda_Y$ a.c. w.r.t. Lebesgue measure with density
$$
p_Y\doteq\frac{d\Lambda_Y}{d\ell}
$$
and such that the differential entropy of $Y$
$$
h(Y)\doteq-\int p_Y(y)\ln p_Y(y)\ell(dy)
$$
is well defined \footnote{Throughout, a differential entropy will be said to be well defined when the corresponding analogous condition than for $p_N$ in (\ref{fe}) holds.}, we write
$$
Y=\Sigma(X,N;G,\beta).
$$
The function $G$ above is called the \textit{cost function} and the constraint (\ref{acc}) and $\beta$ are referred to as an \textit{average cost constraint} and \textit{maximum average cost} at the input, respectively. Following the notation, when $X$ and $N$ are in addition stochastically independent, i.e., with their joint law factorizing as the product of the corresponding marginals, we write
$$
Y=\Sigma_{\bot}(X,N;G,\beta).
$$

The motivation for asking for the resulting sum $X+N$ in $\Sigma(X,N;G,\beta)$ to have a probability law a.c. w.r.t. Lebesgue measure comes from the independent case. Indeed, if $N$ has a probability law $\Lambda_N$ a.c. w.r.t. Lebesgue measure and $X$ and $Y$ are RVs such that $Y=X+N$ with $X$ and $N$ independent, then for any set $B\in\B(\R)$ with $\ell(B)=0$ we have
\begin{eqnarray*}
\Lambda_Y(B)&=&\Lambda_{X+N}(B)\\
&=&\int\Lambda_N(B-x)\Lambda_X(dx)\\
&=&0
\end{eqnarray*}
with $B-x\doteq\{z\in\R:z+x\in B\}$, since $\ell(B-x)=0$ and $\Lambda_N$ is a.c. w.r.t. Lebesgue measure. Therefore, $\Lambda_Y$ is a.c. w.r.t. Lebesgue measure too.

Furthermore, we write
$$
\Sigma(N;G,\beta)
$$
for the set of all RVs $Y$ that admit the representation
$$
Y=\Sigma(X,N;G,\beta)
$$
for some RV $X$, and analogously for the set
$$
\Sigma_{\bot}(N;G,\beta).
$$
Note they are both non-empty. Indeed, the constant RV $X\doteq x$ with $G(x)=\beta$ (at least one such $x$ always exists since $\beta\in\I_G\subseteq G(\R)$) trivially satisfies the average cost constraint and, moreover, the sum $x+N$ is a.c. w.r.t. Lebesgue measure and with well defined differential entropy (note $h(x+N)=h(N)$ and $N\in\N(\nll,\nuu)$).

Given an RV $N\in\N(\nll,\nuu)$, cost function $G$ and maximum average cost $\beta$, we will consider in the following sections the problems of finding
\begin{equation}\label{GMP}
\sup\{h(Y):Y\in\Sigma(N;G,\beta)\}
\end{equation}
and
\begin{equation}\label{IMP}
\sup\{h(Y):Y\in\Sigma_{\bot}(N;G,\beta)\}
\end{equation}
where for sake of well definiteness the suprema are always considered as $\R^*$-valued. Note we may interpret
$$
Y=\Sigma_{\bot}(X,N;G,\beta)
$$
as the output of a discrete time memoryless additive noise channel with input $X$, average cost constraint at the input
$$
\E G(X)\leq\beta
$$
and noise $N$. For such a channel, we define the corresponding channel capacity $C(N;G,\beta)$ by
$$
C(N;G,\beta)\doteq\sup\{h(Y):Y\in\Sigma_{\bot}(N;G,\beta)\}-h(N)
$$
with the same well definiteness convention for the suprema as before. Note that, and as expected,
$$
C(N;G,\beta)\geq 0
$$
(possibly $\infty$), since
$$
\sup\{h(Y):Y\in\Sigma_{\bot}(N;G,\beta)\}\geq h(x+N)=h(N)
$$
upon considering the same constant RV $X\doteq x$ as before. The definition of channel capacity is inspired from the well known fact (see for example \cite{RG1968}) that in a discrete time memoryless additive noise channel with noise $N$, input $X$ and output $Y$, and with all respective probability laws being a.c. w.r.t. Lebesgue measure and with densities giving well define differential entropies, we have for the corresponding input-output mutual information $I(X;Y)$
$$
I(X;Y)=h(Y)-h(N).
$$
Note since obviously
\begin{equation*}
\sup\{h(Y):Y\in\Sigma_{\bot}(N;G,\beta)\}\leq\sup\{h(Y):Y\in\Sigma(N;G,\beta)\},
\end{equation*}
we have the gross upper bound on the channel capacity
$$
C(N;G,\beta)\leq\sup\{h(Y):Y\in\Sigma(N;G,\beta)\}-h(N).
$$

In the following section we will characterize the pairs of RVs
$$
(X^\dag,Y^\dag)
$$
in
$$
Y^\dag=\Sigma(X^\dag,N;G,\beta)
$$
satisfying
\begin{equation*}
\sup\{h(Y):Y\in\Sigma(N;G,\beta)\}=\max\{h(Y):Y\in\Sigma(N;G,\beta)\}=h(Y^\dag),
\end{equation*}
which will then be exploited to characterize when possible the case
\begin{equation*}
\sup\{h(Y):Y\in\Sigma_\bot(N;G,\beta)\}=\max\{h(Y):Y\in\Sigma_\bot(N;G,\beta)\}=h(Y^\ddag)
\end{equation*}
for some RV $Y^\ddag$ related to an RV $X^\ddag$ via
$$
Y^\ddag=\Sigma_{\bot}(X^\ddag,N;G,\beta).
$$
In that case
$$
C(N;G,\beta)=h(Y^\ddag)-h(N)
$$
and the probability law of such $X^\ddag$ will then give a corresponding capacity-achieving input distribution.

In the context of the problems posed by (\ref{GMP}) and (\ref{IMP}), we will consider amplitude constraints at the input as well, in problems of the general form
$$
\sup\{h(Y):Y\in\Sigma(N;G,\beta,\overline{(\xll,\xuu)})\}
$$
and
$$
\sup\{h(Y):Y\in\Sigma_{\bot}(N;G,\beta,\overline{(\xll,\xuu)})\}
$$
with $\xll,\xuu\in\R^*$, $\xll<\xuu$, and
$$
\Sigma(N;G,\beta,\overline{(\xll,\xuu)})
$$
and
$$
\Sigma_{\bot}(N;G,\beta,\overline{(\xll,\xuu)})
$$
defined the same as before but including the additional constraint
\begin{equation}\label{ac}
X\in\overline{(\xll,\xuu)},\;\;\P\text{-almost surely},
\end{equation}
i.e., with
$$
\Lambda_X(\overline{(\xll,\xuu)})=1,
$$
and assuming of course the constraints (\ref{acc}) and (\ref{ac}) are compatible \footnote{That is, $\inf\{G(\overline{(c,d)})\}\leq\beta$. Also, since the case with $c=-\infty$ and $d=\infty$ imposes no amplitude constraint at all, the relevant cases are either when $c>-\infty$, when $d<\infty$, or both. The case $c=d$ is trivial, of course.}. The corresponding notation for channel capacity
$$
C(N;G,\beta,\overline{(\xll,\xuu)})
$$
obeys to the same logic, i.e., it stands for
$$
\sup\{h(Y):Y\in\Sigma_{\bot}(N;G,\beta,\overline{(\xll,\xuu)})\}-h(N).
$$

Finally, and before proceeding with the development of the paper in the next section, we note that in addition to the trivial case when $G$ is constant, which has been ruled out from the conditions imposed on $G$, there is another case whose solution is straightforward. We briefly point it out in the following remark.

\begin{remark}\label{linear}
Consider $N\in\N(\nll,\nuu)$ and linear cost function $G(x)=px+q$, $x\in\R$, $p,q\in\R$ (the case of constant $G$ being anyway included here when $p=0$). Then, for any $\beta\in\R$ if $p\neq 0$, or for $\beta=q$ if $p=0$, we have
$$
\sup\{h(Y):Y\in\Sigma(N;G,\beta)\}=\infty
$$
and
$$
C(N;G,\beta)=\infty.
$$
Indeed, consider the family of RVs $\{X_k\}_{k\geq 0}$, where each $X_k$ is a Gaussian RV of mean $\mu$ and variance $\sigma^2_k$ independent of $N$, and with $\mu$ satisfying
$$
\mu=\frac{\beta-q}{p}
$$
if $p\neq 0$, or
$$
\mu=q=\beta
$$
if $p=0$, and $\sigma_k^2$ chosen such that
$$
\sigma_k^2\rightarrow\infty
$$
as $k\rightarrow\infty$. Then, the constraint is satisfied, and also we have
$$
h(X_k)\rightarrow\infty
$$
as $k\rightarrow\infty$ as well. From the well known inequality (see for example \cite{TC1991}) that for $X_k$ and $N$ independent
$$
h(X_k+N)\geq\max\{h(X_k), h(N)\},
$$
it follows that
$$
h(X_k+N)\geq h(X_k)\rightarrow\infty
$$
as $k\rightarrow\infty$, and therefore
$$
C(N;G,\beta)=\infty.
$$
Hence,
$$
\sup\{h(Y):Y\in\Sigma(N;G,\beta)\}=\infty
$$
too.
\end{remark}

\section{Optimum Characterization}
\label{main}


For a given $N\in\N(\nll,\nuu)$, cost function $G$ and maximum average cost $\beta$, we first consider the problem posed in (\ref{GMP}), i.e., the problem of finding
$$
\sup\{h(Y):Y\in\Sigma(N;G,\beta)\}
$$
and its channel capacity applications subsumed in (\ref{IMP}). At the end of the section we will discuss on how the case including amplitude constraints
$$
\sup\{h(Y):Y\in\Sigma(N;G,\beta,\overline{(\xll,\xuu)})\}
$$
can be handled, as well as its corresponding channel capacity applications.

Towards a characterization of (\ref{GMP}), we begin by noting that given a probability density function (pdf) $p$, there always exists a non-decreasing transformation $f$ such that
$$
\Lambda_{f(N)}(B)=\int_{B}p(u)\ell(du)
$$
for all $B\in\B(B)$, i.e., such that the RV $f(N)$ admits $p$ as a density w.r.t. Lebesgue measure. Indeed, it is easy to see that with $F_N$ the probability distribution function of $N$, i.e.,
$$
F_N(\cdot)\doteq\Lambda_N((-\infty,\cdot])=\int_{(-\infty,\cdot]}p_N(n)\ell(dn),
$$
it suffices to take
$$
f\doteq r_F\circ F_N
$$
with $\circ$ denoting composition and $r_F$ the right-inverse of the probability distribution function $F$ associated $p$, i.e.,
$$
r_F(\cdot)\doteq\inf\{u:F(u)>\cdot\}
$$
and
$$
F(\cdot)\doteq\int_{(-\infty,\cdot]}p(u)\ell(du)
$$
(giving then $(F\circ r_F)(\cdot)=\cdot$).
Since $f$ so defined is non-decreasing, its derivative $f'$ is well defined $\ell$-almost everywhere, and therefore, since also at those points and from $F\circ f=F_N$ we have
$$
(p\circ f)(\cdot)f'(\cdot)=p_N(\cdot)
$$
and $p_N$ is strictly positive in $(\nll,\nuu)$, we conclude $f'>0$ $\ell$-almost everywhere in $(\nll,\nuu)$ as well.

Motivated from the preceding discussion and since in our context we have an RV $Y$ obtained from $X$ and $N$ as
$$
Y=\Sigma(X,N;G,\beta),
$$
we are lead to searching for an extremum of the differential entropy at the output $h(Y)$ occurring at a joint distribution for $(X,N)$ induced by a relationship between $X$ and $N$ of the form
$$
X=g(N),
$$
i.e., giving
$$
Y\eqd f(N)\doteq g(N)+N,
$$
with $\eqd$ denoting equality in distribution, for some appropriate transformation $f$ (at least differentiable $\ell$-almost everywhere in $(\nll,\nuu)$ and with $f'>0$ at those points), and such that
$$
\E G(X)=\E G(g(N))\leq\beta
$$
(of course, and for well posedness, with $\E|G(g(N))|<\infty$, $\E|g(N)|<\infty$ and $h(g(N)+N)\in\R$). That when such an extremum exists it indeed corresponds to a true maximum
$$
\max\{h(Y):Y\in\Sigma(N;G,\beta)\}
$$
taken over all corresponding joint distributions $\Lambda_{X,N}$, will be discussed shortly.

We characterize such $g=f-id$ in $(\nll,\nuu)$ first, with $id$ the identity, postponing for the moment the discussion of the appropriate boundary conditions. To proceed further, since $p_N\in\C^1(\nll,\nuu)$ and $p_N>0$ in $(\nll,\nuu)$, we will in fact search for an extremum with $f'>0$ and $f'$ continuous (that is, sufficiently smooth), both everywhere in $(\nll,\nuu)$, i.e., in terms of $g$,
$$
g\in\C^1(\nll,\nuu)\;\;\text{and}\;\;g'>-1\;\;\text{everywhere in $(\nll,\nuu)$}.
$$
Note then, in particular, by considering the standard transformation of pdfs via strictly increasing differentiable mappings, we obtain
$$
p_Y(y)=p_N(f^{-1}(y))f^{-1'}(y).
$$
But,
\begin{equation}\label{rel1}
y=f(n)\;\;\;\text{and}\;\;\;n=f^{-1}(y),
\end{equation}
therefore
\begin{equation}\label{rel2}
y=f(n)=g(n)+n=g(f^{-1}(y))+f^{-1}(y),
\end{equation}
and thus, upon taking derivatives w.r.t. $y$,
$$
1=g'(f^{-1}(y))f^{-1'}(y)+f^{-1'}(y),
$$
or
\begin{equation}\label{rel3}
f^{-1'}(y)=\frac{1}{1+g'(f^{-1}(y))}=\frac{1}{1+g'(n)}.
\end{equation}
Also, since
$$
\frac{dy}{dn}=f'(n)=1+g'(n),
$$
we then conclude
$$
p_Y(y)\ln p_Y(y)\ell(dy)=p_N(n)\ln\frac{p_N(n)}{1+g'(n)}\ell(dn),
$$
and hence the differential entropy at the output \footnote{Since throughout integrals are w.r.t. Lebesgue measure, we may interchangeable consider for the domain of integration $\overline{(\nll,\nuu)}$ or $(\nll,\nuu)$.}
\begin{equation}\label{ODE}
h(Y)=-\int_{\nll}^{\nuu}p_N(n)\ln\frac{p_N(n)}{1+g'(n)}\ell(dn).
\end{equation}
As for the constraint, we have
$$
\E G(X)=\int_{\nll}^{\nuu}G(g(n))p_N(n)\ell(dn)\leq\beta.
$$
Before proceeding further, we note by Jensen's inequality (see for example \cite{DW1991})
$$
G(\E g(N) +c)\leq\E G(g(N)+c)
$$
for any $c\in\R$, and therefore, since $\beta\in \I_G$ and from (\ref{ODE}) the differential entropy at the output is invariant under changes $g\leadsto g+c$ for constant $c\in\R$ (reflecting the well known fact that $h(Y+c)=h(Y)$), a direct continuity argument shows we may w.l.o.g. assume
$$
\E G(g(N))=\beta
$$
at an extremum over such $g$, i.e.,
\begin{equation}\label{IACC}
\int_{\nll}^{\nuu}G(g(n))p_N(n)\ell(dn)=\beta.
\end{equation}
Setting
$$
J(n,u,v)\doteq -p_N(n)\ln\frac{p_N(n)}{1+v}
$$
and
$$
K(n,u,v)\doteq p_N(n)G(u),
$$
which belong to the class $\C^2$ over the domain $(a,b)\times\R\times(-1,\infty)$, we recognize then (\ref{ODE}) and (\ref{IACC}) in functional notation as
\begin{equation}\label{ipp1}
\mathcal{J}[g]\doteq\int_{\nll}^{\nuu}J(n,g,g')\ell(dn)
\end{equation}
and
\begin{equation}\label{ipp2}
\mathcal{K}[g]\doteq\int_{\nll}^{\nuu}K(n,g,g')\ell(dn)=\beta,
\end{equation}
respectively, and therefore the problem of characterizing an extremum for the differential entropy at the output in (\ref{ODE}) with average cost constraint at the input in (\ref{IACC}) accounts for solving the corresponding isoperimetric problem from the calculus of variations (see for example \cite{IMGSVF2000,BVB2006}) posed by (\ref{ipp1}) and (\ref{ipp2}), with appropriate boundary conditions considered shortly. Thus, as long as $g$ is not an extremum also for $\mathcal{K}$, a local (smooth) extremum for $\mathcal{J}$ in (\ref{ipp1}) satisfying the constraint in (\ref{ipp2}) and occurring at $g$ ($\in\C^1(\nll,\nuu)$) solves the Euler-Lagrange equation in $(\nll,\nuu)$
\begin{equation}\label{ELe}
\frac{d}{dn}\frac{\partial H}{\partial g'}-\frac{\partial H}{\partial g}=0
\end{equation}
with
$$
H(n,g,g')\doteq J(n,g,g')-\lambda K(n,g,g')
$$
for some $\lambda\in\R$, the corresponding Lagrange multiplier. Since $\lambda$ will correspond to the rate of change of the extremum $\mathcal{J}[g]$ (when it exists, of course) with respect to the isoperimetric parameter $\beta$ (see for example \cite{BVB2006}), and since we obviously have non-decreasingness with respect to $\beta$ inherited from being originally an upper bound constraint, we in fact have that $\lambda\geq 0$.

That indeed no common extremum for $\mathcal{J}$, along with constraint \footnote{Strictly speaking also satisfying the appropriate boundary conditions to be considered, as mentioned, shortly.} (\ref{ipp2}), and $\mathcal{K}$ exists, and so validating the Euler-Lagrange equation approach \footnote{The abnormal case of common extremals for $\mathcal{J}$ and $\mathcal{K}$ is known in the literature as the rigid extremals case.} (again, see for example \cite{IMGSVF2000,BVB2006}), will be discussed shortly. We now proceed further with the resolution of (\ref{ELe}). We have
$$
\frac{\partial H}{\partial g}(n)=-\lambda p_N(n)G'(g(n))\;\;\;\text{and}\;\;\;\frac{\partial H}{\partial g'}(n)=\frac{p_N(n)}{1+g'(n)}.
$$
Note since $\frac{\partial H}{\partial g'}=\frac{p_N}{1+g'}$ and $p_N$ continuous in $(\nll,\nuu)$, the Weierstrass-Erdmann corner conditions in particular rule out in fact the possibility of broken extremals (that is, being piecewise smooth; see for example \cite{IMGSVF2000}). Moreover, since
$$
\frac{\partial^2 H}{\partial g'^2}(n)=-\frac{p_N(n)}{(1+g'(n))^2}\neq 0
$$
in $(\nll,\nuu)$, from \cite[Thm. 3, pp. 17]{IMGSVF2000} we conclude that indeed $g\in\C^2(\nll,\nuu)$, and therefore
we may explicitly write
$$
\frac{d}{dn}\frac{\partial H}{\partial g'}(n)=\frac{p_N'(n)(1+g'(n))-p_N(n)g''(n)}{(1+g'(n))^2}.
$$
Hence, the Euler-Lagrange equation in $(\nll,\nuu)$ becomes
\begin{equation}\label{ELCFI}
\frac{p_N'(n)}{p_N(n)}-\frac{g''(n)}{1+g'(n)}+\lambda G'(g(n))(1+g'(n))=0,
\end{equation}
i.e.,
$$
\frac{d}{dn}\ln\frac{p_N(n)}{1+g'(n)}=-\lambda G'(g(n))(1+g'(n)),
$$
which admits the first integral \footnote{Note it is also possible to integrate equation (\ref{ELe}) directly, before taking the $d/dn$-derivative. However, it will prove to be more convenient to get an equation such as (\ref{firstintegral}), where $p_N$ has been isolated.}
\begin{equation}\label{firstintegral}
p_N(n)=C(1+g'(n))e^{-\lambda\left(\int G'(g(n))dn+G(g(n))\right)},
\end{equation}
$n\in(\nll,\nuu)$, for some real constant $C>0$ and with $\int G'(g(n))dn$ denoting a primitive function for $G'(g(n))$ (w.l.o.g. we may assume its arbitrary constant of integration has already been absorbed in $C$). Note then the requirement $g'>-1$ everywhere in $(\nll,\nuu)$ is automatically included.

Now we come back to the question as to what boundary conditions should be imposed on $g$ when further integrating (\ref{firstintegral}). Since fixed boundary conditions are not known a priori for the curve $g$ corresponding to a potential extremum, the so called natural boundary conditions must then be imposed (\cite{IMGSVF2000,BVB2006}), which correspond to asking, and without precluding a priori too possible asymptotes at the boundaries,
$$
\frac{\partial H}{\partial g'}(\nll+)\doteq\lim_{\substack{n\rightarrow\nll\\n>\nll}}\frac{\partial H}{\partial g'}(n)=0
$$
and
$$
\frac{\partial H}{\partial g'}(\nuu-)\doteq\lim_{\substack{n\rightarrow\nuu\\n<\nuu}}\frac{\partial H}{\partial g'}(n)=0,
$$
i.e.,
\begin{equation}\label{BCs}
\lim_{\substack{n\rightarrow\nll\\n>\nll}}\frac{p_N(n)}{1+g'(n)}=\lim_{\substack{n\rightarrow\nuu\\n<\nuu}}\frac{p_N(n)}{1+g'(n)}=0,
\end{equation}
the existence of such limits being of course implicitly asked in the previous notation \footnote{Note continuity or even differentiability in the interior does not guarantee existence of the limits at the boundaries, as shown for example by $\sin(1/x)$ in a vicinity of $0$.}.

Note from the first integral in (\ref{firstintegral}) and the boundary conditions in (\ref{BCs}), we can rule out the possibility of $\lambda=0$, concluding then $\lambda>0$. Indeed, $\lambda=0$ in (\ref{firstintegral}) would imply
$$
\frac{p_N(n)}{1+g'(n)}=C,\;\;\;n\in(\nll,\nuu),
$$
and then, from the boundary conditions in (\ref{BCs}),
$$
C=0,
$$
which cannot be true.

It remains then to check that, as claimed, no common extremum for $\mathcal{J}$, satisfying (\ref{ipp2}) and (\ref{BCs}), and $\mathcal{K}$ exists. Indeed, no solution to the Euler-Lagrange equation (\ref{ELe}) that satisfies the constraint (\ref{ipp2}) and the boundary conditions (\ref{BCs}) can be an extremum for $\mathcal{K}$. If so, since
$$
\frac{\partial H}{\partial g}=-\lambda\frac{\partial K}{\partial g}=-\lambda\left(\frac{d}{dn}\frac{\partial K}{\partial g'}+\frac{\partial K}{\partial g}\right)
$$
(being $\frac{\partial K}{\partial g'}\equiv 0$), and since also the corresponding Euler-Lagrange equation an extremum for $\mathcal{K}$ must satisfy is precisely
$$
\frac{d}{dn}\frac{\partial K}{\partial g'}+\frac{\partial K}{\partial g}=0,
$$
we conclude
$$
\frac{\partial H}{\partial g}=0.
$$
Therefore from (\ref{ELe})
$$
\frac{d}{dn}\frac{\partial H}{\partial g'}(n)=\frac{d}{dn}\frac{p_N(n)}{1+g'(n)}=0,\;\;\;n\in(\nll,\nuu),
$$
hence
$$
\frac{p_N(n)}{1+g'(n)}=\text{constant},\;\;\;n\in(\nll,\nuu),
$$
and thus, again from the boundary conditions in (\ref{BCs}),
$$
\frac{p_N}{1+g'}\equiv 0\;\;\;\text{in $(\nll,\nuu)$},
$$
a contradiction.

So far we have characterized by (\ref{ELe}) and (\ref{BCs}), equivalently (\ref{firstintegral}) and (\ref{BCs}), possible local (smooth) extrema for (\ref{ODE}) and satisfying the imposed average cost constraint at the input. However, as well known, (\ref{ELe}) and (\ref{BCs}) correspond to the infinite dimensional counterpart of the Lagrangian's null gradient condition in elementary analysis, not guaranteeing by any means the existence of an extremum, even locally. However, what we have at hand corresponds to a concave optimization problem, inherited from the concavity of $h(Y)$ in $p_Y(\cdot)$. Indeed, rewrite the entropy functional in (\ref{ODE}) as
\begin{equation}\label{ODE2}
h(Y)=h(N)+\int_{\nll}^{\nuu}p_N(n)\ln(1+g'(n))\ell(dn)
\end{equation}
and consider $g_1,g_2$ both being $\ell$-almost everywhere differentiable in $(\nll,\nuu)$ and with $g_1',g_2'>-1$ at those points, and such that
$$
\E G(g_i(N))\leq\beta,\;\;\;i=1,2
$$
and
$$
\int_{\nll}^{\nuu}p_N(n)\big|\ln(1+g_i'(n))\big|\ell(dn)<\infty,\;\;\;i=1,2
$$
with $\alpha_1,\alpha_2>0$ and $\alpha_1+\alpha_2=1$. The corresponding convex linear combination $\alpha_1g_1+\alpha_2g_2$ is $\ell$-almost everywhere differentiable in $(\nll,\nuu)$ with $(\alpha_1g_1+\alpha_2g_2)'>-1$ at those points and, from the convexity of $G$,
$$
\E G(\alpha_1g_1(N)+\alpha_2g_2(N))\leq\alpha_1\beta+\alpha_2\beta=\beta.
$$
Moreover, since $1+\alpha_1g_1'+\alpha_2g_2'=\alpha_1(1+g_1')+\alpha_2(1+g_2')$ with $\alpha_i(1+g_i')>0$ $\ell$-almost everywhere in $(\nll,\nuu)$, $i=1,2$, it is easy to see that
\begin{equation*}
\frac{1}{2}\big|\ln(1+\alpha_1g_1'+\alpha_2g_2')\big|
\leq\big|\ln(1+g_1')\big|+\big|\ln(1+g_2')\big|+|\ln\alpha_1|+|\ln\alpha_2|
\end{equation*}
$\ell$-almost everywhere in $(\nll,\nuu)$ too, and therefore
$$
\int_{\nll}^{\nuu}p_N(n)\big|\ln(1+\alpha_1g_1'(n)+\alpha_2g_2'(n))\big|\ell(dn)<\infty.
$$
Thus, from the concavity of $\ln$, it is easy to see that
\begin{equation*}
\int_{\nll}^{\nuu}p_N(n)\ln(1+\alpha_1g_1'(n)+\alpha_2g_2'(n))\ell(dn)
\geq\alpha_1\int_{\nll}^{\nuu}p_N(n)\ln(1+g_1'(n))\ell(dn)+\alpha_2\int_{\nll}^{\nuu}p_N(n)\ln(1+g_2'(n))\ell(dn),
\end{equation*}
as claimed.

Being a concave optimization problem, we are then lead to the conclusion that the existence of real constants $C>0$ and $\lambda>0$ and of a corresponding solution $g\in\C^1(\nll,\nuu)$ to (\ref{firstintegral}) and satisfying $\E|g(N)|<\infty$, (\ref{BCs}) and (\ref{IACC}) \footnote{Note then and by construction $\E G(g(N))=\beta\in\R$, and therefore in fact $\E|G(g(N))|<\infty$.} in fact corresponds to the existence of a proper and well defined maximum for (\ref{ODE}),
attained at
$$
Y^\dag=\Sigma(X^\dag,N;G,\beta)
$$
with
$$
X^\dag=g(N).
$$

It only remains to check that the maximum above does indeed solve the original problem of computing
$$
\sup\{h(Y):Y\in\Sigma(N;G,\beta)\},
$$
being then equal to $\max\{h(Y):Y\in\Sigma(N;G,\beta)\}$ with
\begin{eqnarray*}
\max\{h(Y):Y\in\Sigma(N;G,\beta)\}
&=&h(Y^\dag)\\
&=&h(X^\dag+N)\\
&=&h(g(N)+N).
\end{eqnarray*}

For that purpose, consider $\sigma(N)\subseteq\F$, the sigma-algebra generated by $N$ (the noise RV), and note each input $X$ to the channel as well as the noise belong to $L^1(\Omega,\F,\P)$, the space of integrable RVs. Therefore, each corresponding output $Y$ also belongs to $L^1(\Omega,\F,\P)$, and we can therefore consider the projection of the channel
$$
Y=X+N
$$
onto $\sigma(N)$, to get
\begin{equation}\label{projection}
\E[Y|\sigma(N)]=\E[X|\sigma(N)]+N,
\end{equation}
$\P$-almost surely. That is, with $Y_N\doteq\E[Y|\sigma(N)]$ and $X_N\doteq\E[X|\sigma(N)]$ (both defined as versions in a $\P$- almost surely sense),
$$
Y_N=X_N+N.
$$
As for the constraint
$$
\E G(X)\leq\beta,
$$
and by Jensen's inequality,
$$
\beta\geq\E G(X)=\E\E[G(X)|\sigma(N)]\geq\E G(\E[X|\sigma(N)]),
$$
i.e.,
$$
\E G(X_N)\leq\beta.
$$
Hence, we have
$$
Y_N=X_N+N
$$
with the constraint
$$
\E G(X_N)\leq\beta,
$$
and therefore the channel (along with its constraint) is invariant under projection onto $\sigma(N)$. We conclude then the maximum, when it exists, must be attained at
$$
Y^\dag=\Sigma(X^\dag,N;G,\beta)
$$
with
$$
X^\dag=g(N),
$$
i.e., with $X^\dag$ (and hence $Y^\dag$ too) being $\sigma(N)$-measurable. Indeed, since for each input-output pair
$$
Y=X+N,
$$
$$
\E G(X)\leq\beta,
$$
we have invariance
$$
Y_N=X_N+N,
$$
$$
\E G(X_N)\leq\beta,
$$
and since for each such pair there exist Borel-measurable functions $f$ and $g$ such that
$$
Y_N=f(N),\;\;\;X_N=g(N),
$$
$\P$-almost surely, that is
$$
f(N)=g(N)+N,
$$
$$
\E G(g(N))\leq\beta,
$$
then, if $X$ is such that $Y$ generates maximum entropy at the output, we have that its corresponding $g$ must in turn correspond to a maximum entropy curve in the noise-input plane, and therefore it must satisfy the Euler-Lagrange equation in (\ref{ELCFI}) for some $\lambda>0$, along with the boundary conditions in (\ref{BCs}), with the corresponding regularity properties for $g$ (equivalently $f$) following by the arguments leading to such equation. Hence, since $\lambda>0$ and with $X^\dag_N=g(N)$, in the optimum we must have
$$
\E G(g(N))=\beta,
$$
and therefore
$$
\beta\geq\E G(X^\dag)\geq\E G(X_N^\dag)=\E G(g(N))=\beta,
$$
from where we conclude
$$
\E\left[G(X^\dag)-G(g(N))\right]=0,
$$
hence
$$
\E\E\left[G(X^\dag)-G(g(N))|\sigma(N)\right]=0
$$
but, since by Jensen's inequality
$$
\E[G(X^\dag)|\sigma(N)]\geq G(g(N))
$$
$\P$-almost surely, we conclude
$$
\E[G(X^\dag)|\sigma(N)]=G(g(N))
$$
$\P$-almost surely as well, and therefore
$$
X^\dag=g(N)
$$
$\P$-almost surely too, unless $G$ is either constant or linear, both cases that cannot deliver a finite extremum (see Remark \ref{linear} in Section \ref{setting}).

Before proceeding further, we make the following remark regarding the channel projection.

\begin{remark}
Note analogously to (\ref{projection}), but considering the corresponding orthogonal projection of the channel, gives
$$
Y-\E[Y|\sigma(N)]=X-\E[X|\sigma(N)],
$$
$\P$-almost surely, and therefore we have consistency
$$
Y^\dag-\E[Y^\dag|\sigma(N)]=X^\dag-\E[X^\dag|\sigma(N)]=0,
$$
$\P$-almost surely as well, being $X^\dag$ $\sigma(N)$-measurable.
\end{remark}

Finally, before stating the result characterizing the optimum when it exists, we note from the concavity of the optimization problem at hand, we know the optimizing function $g$, when it exists, it is unique in an $\ell$-almost everywhere sense. Moreover, since the previous arguments show that $X^\dag$, and hence $Y^\dag$ too are both $\sigma(N)$-measurable, we obtain $\P$-almost surely uniqueness in the following sense. Assume there exists $(X^\dag,Y^\dag)$ such that
$$
Y^\dag=\Sigma(X^\dag,N;G,\beta)
$$
$\P$-almost surely and
\begin{equation*}
\sup\{h(Y):Y\in\Sigma(N;G,\beta)\}=\max\{h(Y):Y\in\Sigma(N;G,\beta)\}=h(Y^\dag).
\end{equation*}
If there exists other pair $(\hat{X}^\dag,\hat{Y}^\dag)$ such that
$$
\hat{Y}^\dag=\Sigma(\hat{X}^\dag,N;G,\beta)
$$
$\P$-almost surely and $h(\hat{Y}^\dag)=h(Y^\dag)$, then we have
$$
(\hat{X}^\dag,\hat{Y}^\dag)=(X^\dag,Y^\dag)\;\;\P\text{-almost surely as well}.
$$

In summary, and joining together all the elements discussed previously in this section, we have thus proved the following result.

\begin{theorem}\label{thm1}
Consider $N\in\N(\nll,\nuu)$, cost function $G$ and maximum average cost $\beta$. Assume there exist real constants $C>0$ and $\lambda>0$ and a function $g\in\C^1(\nll,\nuu)$ such that \footnote{Recall from equation (\ref{firstintegral}) that $\int G'(g(n))dn$ denotes a primitive function for $G'(g(n))$ whose arbitrary constant of integration has already been absorbed (w.l.o.g.) in $C$.}
$$
p_N(n)=C(1+g'(n))e^{-\lambda\left(\int G'(g(n))dn+G(g(n))\right)}
$$
for $n\in(\nll,\nuu)$,
$$
\lim_{\substack{n\rightarrow\nll\\n>\nll}}\frac{p_N(n)}{1+g'(n)}=\lim_{\substack{n\rightarrow\nuu\\n<\nuu}}\frac{p_N(n)}{1+g'(n)}=0,
$$
$$
\E G(g(N))=\beta
$$
and $\E |g(N)|<\infty$. Then, with
$$
Y^\dag=\Sigma(X^\dag,N;G,\beta)\;\;\;\text{and}\;\;\;X^\dag=g(N)
$$
we have
\begin{equation*}
\sup\{h(Y):Y\in\Sigma(N;G,\beta)\}=\max\{h(Y):Y\in\Sigma(N;G,\beta)\}=h(Y^\dag).
\end{equation*}
Moreover, the pair $(X^\dag,Y^\dag)$ as above is unique in a $\P$-almost surely sense.
\end{theorem}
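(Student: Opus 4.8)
The plan is to assemble the ingredients developed in this section into a sufficiency-plus-uniqueness argument, reading the theorem as the assertion that the hypothesized solution $g$ of the first integral realizes the maximum. First I would check that the candidate is admissible, i.e.\ that $Y^\dag\in\Sigma(N;G,\beta)$. Since $p_N>0$ and $C>0$, the hypothesized identity forces $1+g'(n)>0$ throughout $(\nll,\nuu)$, so $f\doteq g+\id\in\C^1(\nll,\nuu)$ is strictly increasing with $f'=1+g'>0$; hence $Y^\dag=f(N)$ has a Lebesgue-a.c.\ law with density $p_N(f^{-1})\,(f^{-1})'$, and the change of variables reproduces (\ref{ODE})--(\ref{ODE2}). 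Using the first integral to write $\ln(1+g')$ in terms of $\ln p_N$, of $G(g)$ and of the primitive of $G'(g)$, the assumptions $\E|g(N)|<\infty$, $\E G(g(N))=\beta$ and (\ref{fe}) give $\E|\ln(1+g'(N))|<\infty$, so $h(Y^\dag)$ is well defined and the cost constraint (\ref{acc}) holds with equality. Thus $X^\dag=g(N)$ is feasible and $Y^\dag\in\Sigma(N;G,\beta)$.

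Next I would invoke the reformulation (\ref{ODE2}): on the convex set of admissible $g$ (those with $g'>-1$ a.e.\ and $\E G(g(N))\le\beta$, convex because $G$ is convex), the functional $g\mapsto h(N)+\int_{\nll}^{\nuu}p_N\ln(1+g')\,\ell(dn)$ is concave, as verified just above. Hence the stationarity expressed by the Euler--Lagrange equation (\ref{ELCFI}), equivalently the first integral together with the natural boundary conditions (\ref{BCs}) and multiplier $\lambda>0$, is not merely necessary but \emph{sufficient} for a global maximum over this class. The hypotheses furnish exactly such a $g$, so $h(Y^\dag)$ is the maximal output entropy among inputs of the form $X=g(N)$.

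The hard part is to upgrade this to the supremum over \emph{all} joint laws. Here I would use the projection of the channel onto $\sigma(N)$: any feasible $(X,Y)$ gives $\E[Y\mid\sigma(N)]=\E[X\mid\sigma(N)]+N$, i.e.\ $Y_N=X_N+N$ with $X_N=g(N)$ Borel, while Jensen's inequality preserves the constraint, $\E G(X_N)\le\beta$, so the channel and its constraint are invariant under projection. The crux is to conclude that a maximizer must itself be $\sigma(N)$-measurable, so that its curve $g$ is forced to solve (\ref{ELCFI}) with (\ref{BCs}); this is precisely where projection-invariance and the necessity of the Euler--Lagrange conditions combine to exclude any entropy gain from a genuinely randomized coupling. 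Together with the previous step this yields $\sup\{h(Y):Y\in\Sigma(N;G,\beta)\}=h(Y^\dag)$, the supremum being attained.

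Finally, strict concavity of the functional --- up to the invariance $g\leadsto g+c$, which the equality $\E G(g(N))=\beta$ removes through the strictly convex behaviour of $G$ --- gives $\ell$-a.e.\ uniqueness of the optimal $g$. Since $X^\dag=g(N)$ and $Y^\dag=f(N)$ are $\sigma(N)$-measurable, this lifts to $\P$-almost sure uniqueness of the pair $(X^\dag,Y^\dag)$, completing the argument. I expect the genuine obstacle to be the reduction step, since merely projecting onto $\sigma(N)$ need not increase output entropy; the delicate claim is rather that \emph{at the optimum} no advantage can be extracted from dependence beyond a deterministic curve $X=g(N)$.
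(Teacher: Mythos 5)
Your outline follows the paper's own route step for step: admissibility of the candidate, concavity of $g\mapsto h(N)+\int p_N\ln(1+g')\,\ell(dn)$ making the Euler--Lagrange/natural-boundary-condition system sufficient for a global maximum over curves, projection of the channel onto $\sigma(N)$ to handle general joint laws, and concavity again for uniqueness. The admissibility and concavity portions are fine and match the paper.

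The genuine gap is the one you yourself flag and then leave open: you assert that ``projection-invariance and the necessity of the Euler--Lagrange conditions combine to exclude any entropy gain from a genuinely randomized coupling,'' but you give no mechanism for this, and you correctly observe in your last sentence that conditioning on $\sigma(N)$ does not by itself increase output entropy. The paper closes this step not through an entropy comparison between $Y$ and $\E[Y|\sigma(N)]$ but through the \emph{cost} constraint. Concretely: for a maximizing pair $(X^\dag,Y^\dag)$, the projected input $X_N^\dag=\E[X^\dag|\sigma(N)]=g(N)$ is feasible and its curve must be the optimal one, for which the Lagrange multiplier satisfies $\lambda>0$ and hence the constraint is tight, $\E G(g(N))=\beta$. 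Combining this with conditional Jensen gives the sandwich
$$
\beta\;\geq\;\E G(X^\dag)\;\geq\;\E\,\E\bigl[G(X^\dag)\,\big|\,\sigma(N)\bigr]\;\geq\;\E G\bigl(\E[X^\dag|\sigma(N)]\bigr)\;=\;\E G(g(N))\;=\;\beta,
$$
forcing $\E[G(X^\dag)|\sigma(N)]=G(g(N))$ $\P$-almost surely; since $G$ is convex and not affine (the linear case having been excluded as yielding no finite optimum), the equality case of Jensen then forces $X^\dag=g(N)$ $\P$-almost surely. Without this argument --- or some substitute for it --- your proof establishes only that $h(Y^\dag)$ is maximal among inputs of the form $X=g(N)$, not that it equals $\sup\{h(Y):Y\in\Sigma(N;G,\beta)\}$ over all joint laws, and the $\P$-a.s.\ uniqueness of the pair likewise remains unproved, since it rests on the same $\sigma(N)$-measurability of any maximizer.
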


Before discussing channel capacity applications in the next section, we make the following remarks.

\begin{remark}
Note generally speaking, and from a differential equations point of view, further integration of
$$
p_N(n)=C(1+g'(n))e^{-\lambda\left(\int G'(g(n))dn+G(g(n))\right)}
$$
in order to solve for $g$ introduces a third real constant, say $K$, leading then to the determination of $C$, $\lambda$ and $K$ to completely specify such $g$. These constants must then be solved for from the three available equations, namely the two boundary conditions in (\ref{BCs}) and the equality constraint in (\ref{IACC}). 
\end{remark}

\begin{remark}\label{ELAlt}
In some applications (see Section \ref{examples}) it will show to be useful to rewrite the first integral to the Euler-Lagrange equation, namely
$$
p_N(n)=C(1+g'(n))e^{-\lambda\left(\int G'(g(n))dn+G(g(n))\right)},
$$
and as an intermediate stage to further integrating for $g$, in terms of $f^{-1}$ with $f=g+id$ as before, which, upon using equations (\ref{rel1}), (\ref{rel2}) and (\ref{rel3}), becomes
\begin{equation*}
p_N(f^{-1}(y))f^{-1'}(y)=Ce^{-\lambda\int G'(y-f^{-1}(y))dy}.
\end{equation*}
\end{remark}

\section{Capacity Achievability}
\label{main2}

In this section we discuss the applications of Theorem \ref{thm1} in the context of channel capacity. As mentioned in Section \ref{setting}, and under the hypotheses of the previous theorem, we immediately have the gross upper bound
$$
C(N;G,\beta)\leq h(Y^\dag)-h(N)
$$
where
\begin{equation*}
\sup\{h(Y):Y\in\Sigma(N;G,\beta)\}=\max\{h(Y):Y\in\Sigma(N;G,\beta)\}=h(Y^\dag)
\end{equation*}
with
$$
Y^\dag=\Sigma(X^\dag,N;G,\beta)\;\;\;\text{and}\;\;\;X^\dag=g(N).
$$

In what follows we focus on the case when an attainable upper bound for channel capacity may in fact be given, so corresponding to a capacity-achieving input distribution.

Consider $N\in\N(\nll,\nuu)$ and $G$ a cost function as before. We assume for the remaining of this section that for each $\alpha\in \I_G$, the conditions of Theorem \ref{thm1} hold with maximum average cost $\alpha$, i.e., that for each $\alpha\in \I_G$ we may write
\begin{equation*}
\sup\{h(Y):Y\in\Sigma(N;G,\alpha)\}=\max\{h(Y):Y\in\Sigma(N;G,\alpha)\}=h(Y^\dag_\alpha)
\end{equation*}
with
$$
Y_\alpha^\dag=\Sigma(X_\alpha^\dag,N;G,\alpha)\;\;\;\text{and}\;\;\;X_\alpha^\dag=g_\alpha(N).
$$
Conditions for that to be the case will be presented in Section \ref{existence} of the paper. (Note from Section \ref{main} $h(Y_\alpha^\dag)$ is not only strictly increasing in $\alpha\in \I_G$, $\alpha$ being the isoperimetric parameter, but also differentiable\footnote{With the derivative of $h(Y_\alpha^\dag)$ w.r.t. $\alpha$ being equal to the corresponding Lagrange multiplier $\lambda(\alpha)>0$.}, and hence continuous too.)

Write $\F[\mu]$ for the Fourier transform of a finite Borel measure $\mu$ in $\R$, i.e.,
$$
\F[\mu](f)\doteq\int_{\R}e^{-j2\pi f\xi}\mu(d\xi),\;\;\;f\in\R,
$$
with $j\doteq\sqrt{-1}$. Then, for each $\alpha\in \I_G$,
$$
\F[\Lambda_{Y^\dag_{\alpha}}](f)=\int_{\R}e^{-j2\pi f\xi}p_{Y^\dag_{\alpha}}({\xi})\ell(d\xi),\;\;\;f\in\R,
$$
and
$$
\F[\Lambda_{N}](f)=\int_{\nll}^{\nuu}e^{-j2\pi f\xi}p_{N}(\xi)\ell(d\xi),\;\;\;f\in\R,
$$
and where we have expressed the Fourier transforms of the laws of $Y^\dag_{\alpha}$ and $N$ in terms of the corresponding densities w.r.t. Lebesgue measure.

Now, for each $\alpha\in \I_G$ we set
$$
\X_\alpha(f)\doteq\frac{\F[\Lambda_{Y^\dag_{\alpha}}]}{\F[\Lambda_N]}(f)
$$
in
$$
S_N\doteq\{f\in\R:\F[\Lambda_N](f)\neq 0\},
$$
and note that whenever a continuous extension of $\X_\alpha$ from $S_N$ to the whole of $\R$ exists \footnote{Note from the continuity of $\F[\Lambda_{Y^\dag_{\alpha}}](\cdot)$ and $\F[N](\cdot)$, $\X_\alpha(\cdot)$ is already continuous in $S_N$.}, say $\overline{\X}_\alpha$, that indeed corresponds to the Fourier transform of a probability measure in $\R$, then we can associate to $Y_\alpha^\dag$ an RV, $X_{\alpha}^{ind}$, whose probability law is uniquely characterize by the Fourier transform $\overline{\X}_\alpha$, and such that
$$
Y_\alpha^\dag\eqd X_{\alpha}^{ind}+N
$$
and with $X_{\alpha}^{ind}$ and $N$ independent, by construction of its corresponding Fourier transform. By Bochner's theorem \footnote{The original result and proof of Bochner can be found in \cite{SB1933}. Other proofs of Bochner's theorem can be found for example in \cite{RC1975,IMGNYV1964}.}, that such a continuously extended version $\overline{\X}_\alpha$ indeed corresponds to the Fourier transform of a probability measure in $\R$, is equivalent to asking for the extension to be positive definite, i.e., with
$$
\sum_{i=1}^n\sum_{j=1}^nz_iz^*_j\overline{\X}_\alpha(f_i-f_j)\geq 0
$$
for any finitely many real $f_1,\ldots,f_n$ and complex $z_1,\ldots,z_n$, with $z^*_i$ denoting complex conjugation, or, in matrix jargon, for the matrix with $i,j$-th elements $\X_\alpha(f_i-f_j)$ being positive semi-definite for any such finitely many reals \footnote{Other characterizations of positive definite functions can be found for example in \cite{JLBC1959} and references therein. However, the first attempt in searching for such a probability measure is of course to directly check whether an appropriate inverse Fourier transform can be explicitly found.}. Note if that is the case, the advertised unique finite non-negative Borel measure (in $\R$) in Bochner's theorem indeed corresponds to a probability measure since, and from the continuity of $\F[\Lambda_{Y^\dag_{\alpha}}]$ and $\F[N]$,
$$
\overline{\X}_\alpha(0)=\X_\alpha(0)=\frac{\F[\Lambda_{Y^\dag_{\alpha}}]}{\F[\Lambda_N]}(0)=\frac{1}{1}=1.
$$
Note also if in addition $|\overline{\X}_\alpha|\in L^1(\R,\B(\R),\ell)$, i.e.,
$$
\int_{\R}|\overline{\X}_\alpha(f)|\ell(df)<\infty,
$$
then in fact the associated probability measure turns out to be a.c. w.r.t. Lebesgue measure, with density given, up to a Lebesgue-null set, by the corresponding standard inverse Fourier transform formula, i.e., by
$$
\int_{\R}\overline{\X}_\alpha(f)e^{j2\pi fx}\ell(df)
$$
for $\ell$-almost every $x\in\R$.

As we did for the optimal solution
\begin{equation*}
\sup\{h(Y):Y\in\Sigma(N;G,\alpha)\}=\max\{h(Y):Y\in\Sigma(N;G,\alpha)\}=h(Y^\dag_\alpha)
\end{equation*}
with
$$
Y_\alpha^\dag=\Sigma(X_\alpha^\dag,N;G,\alpha)\;\;\;\text{and}\;\;\;X_\alpha^\dag=g_\alpha(N),
$$
we also assume for the remaining of this section that for each $\alpha\in \I_G$, we in fact have the existence of such $X_{\alpha}^{ind}$, i.e., that for each $\alpha\in \I_G$ we may write
\begin{equation}\label{ind}
Y_\alpha^\dag\eqd X_{\alpha}^{ind}+N
\end{equation}
with the RVs $X_{\alpha}^{ind}$ and $N$ being independent. Again, conditions for that to be the case will be presented in Section \ref{existence} of the paper.

To proceed further, note since from (\ref{ind}) we have
$$
h(Y^\dag_{\alpha})=h(X^{ind}_{\alpha}+N)
$$
for each $\alpha\in \I_G$, and since as discussed in Section \ref{main} the rate of change of the optimum w.r.t. the isoperimetric parameter $\alpha$ is strictly positive and therefore $h(Y^\dag_{\alpha})$ is strictly increasing in $\alpha\in \I_G$, we conclude
$$
\E G(X^{ind}_{\alpha})\geq\alpha,
$$
and in fact, by the uniqueness of the optimum,
$$
\E G(X^{ind}_{\alpha})>\alpha,
$$
for each $\alpha\in \I_G$ as well. Set
$$
E(\alpha)\doteq\E G(X^{ind}_{\alpha}),\;\;\;\alpha\in \I_G,
$$
and note that $E(\alpha)$ is clearly strictly increasing and continuous for $\alpha\in \I_G$, that is for $\alpha>\alpha_0$ (recall $\alpha_0\doteq\inf_{x\in\R}G(x)$; what happens as $\alpha$ decreases to $\alpha_0$ when $\alpha_0>-\infty$ will be seen shortly). Therefore, if $\alpha_0=-\infty$, then for each $\alpha\in \I_G$ we have that there exists a unique $\alpha^\ddag(\alpha)\in \I_G$, with $\alpha^\ddag(\alpha)<\alpha$ (since $E(\alpha)>\alpha$), and such that
$$
Y^\dag_{\alpha^\ddag{(\alpha)}}\eqd X_{\alpha^\ddag{(\alpha)}}^{ind}+N
$$
and
$$
\E G(X_{\alpha^\ddag{(\alpha)}}^{ind})=\alpha.
$$
Hence, it is apparent that for all $\alpha\in \I_G$ we have
\begin{equation*}
\sup\{h(Y):Y\in\Sigma(N;G,\alpha^\ddag{(\alpha)})\}=\sup\{h(Y):Y\in\Sigma_{\bot}(N;G,\alpha)\}.
\end{equation*}

Indeed, assume on the contrary that for a given $\alpha\in\I_G$ there exists an RV $X^*$, independent of $N$, with
$$
\E G(X^*)\leq\alpha
$$
and such that, with $Y^*\doteq X^*+N$,
$$
h(Y_{\alpha^\ddag(\alpha)}^\dag)<h(Y^*)<\infty.
$$
Now note that, since
$$
h(Y_{\alpha^\ddag(\alpha)}^\dag)=h(X_{\alpha^\ddag{(\alpha)}}^{ind}+N),
$$
and since $X_{\alpha^\ddag{(\alpha)}}^{ind}$ and $N$ are independent, from a well know Information Theory inequality (see for example \cite{TC1991}) we may write
$$
h(Y_{\alpha^\ddag(\alpha)}^\dag)=h(X_{\alpha^\ddag{(\alpha)}}^{ind}+N)\geq\max\{h(X_{\alpha^\ddag{(\alpha)}}^{ind}), h(N)\},
$$
and therefore
$$
h(Y_{\alpha^\ddag(\alpha)}^\dag)\geq h(N).
$$
Consider the family of RVs
$$
X_\xi^*\doteq \xi X^*+s
$$
with $\xi\in [0,1]$ and $s\in\R$ a shifting parameter whose role will be stated shortly, and set accordingly
$$
Y_\xi^*\doteq X_\xi^*+N.
$$
Finally, for $\xi\in[0,1]$ set
$$
\hat{h}(\xi)\doteq h(Y_\xi^*),
$$
and therefore
$$
\hat{h}(\xi)=h(X_\xi^*+N)=h(\xi X^*+s+N)=h(\xi X^*+N),
$$
being $s$ just a shifting. It is easy to see that $\hat{h}$ is well defined and continuous, with
$$
\hat{h}(0)=h(N)\in\R
$$
and
$$
\hat{h}(1)=h(Y^*)\in\R.
$$
Then, since $h(Y_{\alpha^\ddag(\alpha)}^\dag)\geq h(N)$, we conclude that there exists $\xi_0\in[0,1)$ such that
$$
\hat{h}(\xi_0)=h(Y_{\alpha^\ddag(\alpha)}^\dag),
$$
that is
$$
h(Y_{\xi_0}^*)=h(X_{\xi_0}^*+N)=h(X_{\alpha^\ddag{(\alpha)}}^{ind}+N)=h(Y_{\alpha^\ddag(\alpha)}^\dag),
$$
with $X_{\xi_0}^*$ independent of $N$ ($X^*$ independent of $N$). Now, note since $\xi_0\in[0,1)$, $\xi_0 X^*$ is a compression of the values taken by $X^*$, and therefore, and without changing the entropy $h(Y_{\xi_0}^*)$, we can always choose $s$, by shifting the values of $\xi_0 X^*$ towards regions of lower values of $G$ if necessary, such that
$$
\E G(\xi_0 X^*+s)<\E G(X^*).
$$
Hence, we then have
\begin{equation}\label{cont}
\E G(\xi_0 X^*+s)<\E G(X^*)\leq\alpha =\E G(X_{\alpha^\ddag{(\alpha)}}^{ind}).
\end{equation}
Finally, note by the same arguments than in Section \ref{main}, we can write the representation
$$
Y_{\xi_0}^*\eqd g_{\xi_0}^*(N)+N
$$
for some appropriate function $g_{\xi_0}^*$, and therefore we then have
$$
h(g_{\xi_0}^*(N)+N)=h(Y_{\xi_0}^*)=h(Y_{\alpha^\ddag(\alpha)}^\dag)=h(g_{\alpha^\ddag(\alpha)}(N)+N).
$$
However, since $g_{\alpha^\ddag(\alpha)}$ is unique, being $h(Y_{\alpha^\ddag(\alpha)}^\dag)$ the corresponding optimum, we conclude
$$
g_{\xi_0}^*=g_{\alpha^\ddag(\alpha)}
$$
$\ell$-almost everywhere in $(\nll,\nuu)$, and therefore
$$
Y_{\xi_0}^*\eqd Y_{\alpha^\ddag(\alpha)}^\dag,
$$
from where, and since $X_{\xi_0}^*$ is independent of $N$ and being as well the decomposition
$$
Y^\dag_{\alpha^\ddag{(\alpha)}}\eqd X_{\alpha^\ddag{(\alpha)}}^{ind}+N
$$
unique,
$$
X_{\xi_0}^*\eqd X_{\alpha^\ddag{(\alpha)}}^{ind}.
$$
Hence,
$$
\E G(X_{\xi_0}^*)=\E G(X_{\alpha^\ddag{(\alpha)}}^{ind})=\alpha.
$$
This last equality, along with equation (\ref{cont}) above, then provides the desired contradiction,
$$
\alpha=\E G(X_{\xi_0}^*)<\alpha.
$$

Hence, we conclude that indeed for all $\alpha\in \I_G$ we have
\begin{equation*}
\sup\{h(Y):Y\in\Sigma(N;G,\alpha^\ddag{(\alpha)})\}=\sup\{h(Y):Y\in\Sigma_{\bot}(N;G,\alpha)\},
\end{equation*}
and therefore
\begin{equation*}
\sup\{h(Y):Y\in\Sigma_{\bot}(N;G,\alpha)\}=\max\{h(Y):Y\in\Sigma_{\bot}(N;G,\alpha)\}=h(Y^\ddag_{\alpha})
\end{equation*}
with
$$
Y^\ddag_{\alpha}=\Sigma_{\bot}(X_\alpha^\ddag,N;G,\alpha),
$$
and where
$$
X^\ddag_\alpha\doteq X_{\alpha^\ddag{(\alpha)}}^{ind},
$$
that is, with the probability law of $X^\ddag_\alpha$ so constructed being a corresponding capacity-achieving input distribution for each $\alpha\in \I_G$,
$$
C(N;G,\alpha)=h(Y^\ddag_{\alpha})-h(N).
$$
Note for each $\alpha\in \I_G$,
$$
Y^\ddag_{\alpha}\eqd Y^\dag_{\alpha^\ddag{(\alpha)}}
$$
and therefore
$$
C(N;G,\alpha)=h(Y^\dag_{\alpha^\ddag{(\alpha)}})-h(N).
$$

Now, if $\alpha_0>-\infty$ the exact same argument as before goes through by showing that
$$
\lim_{\substack{\alpha\rightarrow\alpha_0\\\alpha>\alpha_0}}E(\alpha)=\alpha_0.
$$
To that end, note if that is the case, that is if we have $\alpha_0=\inf_{x\in\R}G(x)>-\infty$, then, by the convexity of $G$ and the fact that $G'\neq0$ $\ell$-almost everywhere, we know there exists a unique $x_0\in\R^*$ (recall $\R^*\doteq\R\cup\{-\infty,+\infty\}$) such that
$$
\lim_{x\rightarrow x_0}G(x)=\alpha_0.
$$
Therefore, as $\alpha\rightarrow\alpha_0$, $\alpha>\alpha_0$, we have
$$
X_\alpha^\dag\arrowd x_0
$$
with $\arrowd$ denoting convergence in distribution (note if $x_0=\infty$, the previous convergence meaning that, and with $F_{X_\alpha^\dag}$ denoting the probability distribution function associated to the RV $X_\alpha^\dag$,
$$
F_{X_\alpha^\dag}(x)\rightarrow 0
$$
for all $x\in\R$ as $\alpha\rightarrow\alpha_0$, $\alpha>\alpha_0$, and similarly for the case when $x_0=-\infty$). Hence, since the constant RV $x_0$ is independent of $N$ (considering $N$ as an $\R^*$-valued RV if suitable), we also have the corresponding convergence
$$
X^{ind}_{\alpha}\arrowd x_0
$$
(with the same meaning as before when $x_0=\infty$ or $x_0=-\infty$). Set the function $G_0$ as
$$
G_0\doteq G-\alpha_0
$$
and note then $G_0\geq0$. Now, for $\alpha>\alpha_0$
$$
\E G_0(X^{ind}_{\alpha})=\E\sum_{n\geq 0}G_0(X^{ind}_{\alpha})\mathbbm{1}\{n\leq G_0<n+1\},
$$
where $\mathbbm{1}\{n\leq G_0<n+1\}$ denotes the usual indicator function of the event in parentheses, in this case
$$
\mathbbm{1}\{\omega\in\Omega: n\leq G_0(X^{ind}_{\alpha}(\omega))<n+1\}.
$$
By the Monotone Convergence theorem,
\begin{equation*}
\E\sum_{n\geq 0}G_0(X^{ind}_{\alpha})\mathbbm{1}\{n\leq G_0<n+1\}=\sum_{n\geq 0}\E G_0(X^{ind}_{\alpha})\mathbbm{1}\{n\leq G_0<n+1\}.
\end{equation*}
Since $X^{ind}_{\alpha}\arrowd x_0$, and by breaking each term for $n\geq1$ into two if necessary (by the convexity of $G_0$), from Portmanteau lemma we obtain
$$
\lim_{\substack{\alpha\rightarrow\alpha_0\\\alpha>\alpha_0}}\E G_0(X^{ind}_{\alpha})=\lim_{x\rightarrow x_0}G_0(x)=0
$$
and hence
$$
\lim_{\substack{\alpha\rightarrow\alpha_0\\\alpha>\alpha_0}}E(\alpha)=\lim_{\substack{\alpha\rightarrow\alpha_0\\\alpha>\alpha_0}}\E G(X^{ind}_{\alpha})=\lim_{x\rightarrow x_0}G(x)=\alpha_0,
$$
which concludes the argument.

Before stating the result, we note that the same $\P$-almost surely uniqueness as before holds for the independent case. Indeed, from the uniqueness of the pair $(X_\alpha^\dag,Y_\alpha^\dag)$ in a $\P$-almost surely sense for each $\alpha\in \I_G$, the fact that for every $\alpha\in \I_G$ there exists a unique $\alpha^\ddag(\alpha)\in \I_G$, $\alpha^\ddag(\alpha)<\alpha$, such that
$$
Y_\alpha^\ddag\eqd Y^\dag_{\alpha^\ddag(\alpha)},
$$
and the fact that $X_\alpha^\ddag$ is then uniquely characterized in distribution, and independent of $N$, allow us to conclude that, if in addition to the pair $(X_\alpha^\ddag,Y_\alpha^\ddag)$ satisfying
\begin{equation*}
\sup\{h(Y):Y\in\Sigma_{\bot}(N;G,\alpha)\}=\max\{h(Y):Y\in\Sigma_{\bot}(N;G,\alpha)\}=h(Y^\ddag_{\alpha})
\end{equation*}
with
$$
Y^\ddag_{\alpha}=\Sigma_{\bot}(X_\alpha^\ddag,N;G,\alpha)
$$
$\P$-almost surely, there exists other pair $(\hat{X}^\ddag_\alpha,\hat{Y}^\ddag_\alpha)$ such that
$$
\hat{Y}^\ddag_\alpha=\Sigma(\hat{X}^\ddag_\alpha,N;G,\alpha)
$$
$\P$-almost surely as well, and $h(\hat{Y}^\ddag_\alpha)=h(Y^\ddag_\alpha)$, then we have
$$
(\hat{X}^\ddag_\alpha,\hat{Y}^\ddag_\alpha)=(X^\ddag_\alpha,Y^\ddag_\alpha)\;\;\P\text{-almost surely too},
$$
for each $\alpha\in \I_G$.

In summary, we have thus proved the following corollary to Theorem \ref{thm1}.

\begin{corollary}\label{cor1}
Consider $N\in\N(\nll,\nuu)$ and cost function $G$. Assume the hyphoteses of Theorem \ref{thm1} are satisfied with maximum average cost $\alpha$ for each $\alpha\in \I_G$, so that we may write, as $\alpha$ ranges in $\I_G$,
\begin{equation*}
\sup\{h(Y):Y\in\Sigma(N;G,\alpha)\}=\max\{h(Y):Y\in\Sigma(N;G,\alpha)\}=h(Y_\alpha^\dag)
\end{equation*}
with
$$
Y_\alpha^\dag=\Sigma(X_\alpha^\dag,N;G,\alpha)\;\;\;\text{and}\;\;\;X_\alpha^\dag=g_\alpha(N).
$$
Also assume for each $\alpha\in \I_G$ the Fourier transform quotient
$$
\frac{\F[\Lambda_{Y^\dag_{\alpha}}]}{\F[\Lambda_N]}(f),
$$
defined initially in
$$
S_N\doteq\{f\in\R:\F[\Lambda_N](f)\neq 0\},
$$
has a positive definite continuous extension from $S_N$ to the whole of $\R$. Then, with $X_{\alpha}^{ind}$ denoting an independent of $N$ RV whose probability law is uniquely characterized by the Fourier transform of such positive definite continuous extension, we have that, for each $\alpha\in \I_G$ there exists a unique $\alpha^\ddag(\alpha)\in \I_G$, with $\alpha^\ddag(\alpha)<\alpha$, and such that
\begin{equation*}
\sup\{h(Y):Y\in\Sigma_{\bot}(N;G,\alpha)\}=\max\{h(Y):Y\in\Sigma_{\bot}(N;G,\alpha)\}=h(Y^\ddag_{\alpha})
\end{equation*}
with
$$
Y^\ddag_{\alpha}=\Sigma_{\bot}(X_\alpha^\ddag,N;G,\alpha),
$$
and where
$$
X^\ddag_\alpha\doteq X_{\alpha^\ddag{(\alpha)}}^{ind},
$$
that is, with the probability law of $X^\ddag_\alpha$, equivalently of $X_{\alpha^\ddag{(\alpha)}}^{ind}$, being a corresponding capacity-achieving input distribution for each $\alpha\in \I_G$,
$$
C(N;G,\alpha)=h(Y^\ddag_{\alpha})-h(N).
$$
Moreover, for each $\alpha\in \I_G$
$$
Y^\ddag_{\alpha}\eqd Y^\dag_{\alpha^\ddag{(\alpha)}},
$$
and therefore we may alternatively write
$$
C(N;G,\alpha)=h(Y^\dag_{\alpha^\ddag{(\alpha)}})-h(N).
$$
In addition, each pair $(X^\ddag_\alpha,Y^\ddag_\alpha)$ as above is unique in a $\P$-almost surely sense. Finally, if the positive definite continuous extension referred to above turns out to belong to the space of integrable functions $L^1(\R,\B(\R),\ell)$, then a corresponding probability density function (w.r.t. Lebesgue measure) may be associated to each capacity-achieving input distribution, by the corresponding standard inverse Fourier transform formula.
\end{corollary}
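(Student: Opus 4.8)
The plan is to assemble the construction and achievability arguments developed in this section into a single chain, starting from the hypothesis that the Fourier quotient $\X_\alpha=\F[\Lambda_{Y_\alpha^\dag}]/\F[\Lambda_N]$ admits a positive definite continuous extension $\overline{\X}_\alpha$ on all of $\R$. First I would invoke Bochner's theorem to realize $\overline{\X}_\alpha$ as the Fourier transform of a finite non-negative Borel measure, and observe that $\overline{\X}_\alpha(0)=\X_\alpha(0)=1$ forces that measure to be a probability law; this defines an RV $X_\alpha^{ind}$ with $\F[\Lambda_{X_\alpha^{ind}}]=\overline{\X}_\alpha$. Since $\F[\Lambda_{Y_\alpha^\dag}]=\overline{\X}_\alpha\,\F[\Lambda_N]$ everywhere (on $S_N$ by definition, elsewhere by continuity), the product structure of Fourier transforms yields $Y_\alpha^\dag\eqd X_\alpha^{ind}+N$ with $X_\alpha^{ind}$ and $N$ independent.

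Next I would introduce $E(\alpha)\doteq\E G(X_\alpha^{ind})$ and show $E(\alpha)>\alpha$ for every $\alpha\in\I_G$. The key is that $h(Y_\alpha^\dag)$ is strictly increasing in the isoperimetric parameter (its derivative is the strictly positive multiplier from Section \ref{main}): were $E(\alpha)\le\alpha$, the independent input $X_\alpha^{ind}$ would be feasible at some budget $\alpha'<\alpha$ yet achieve the dependent-case optimum $h(Y_\alpha^\dag)$, violating strict monotonicity together with the uniqueness from Theorem \ref{thm1}. Having $E$ strictly increasing and continuous in $\alpha$, I would then pin down its behavior as $\alpha\downarrow\alpha_0$: when $\alpha_0>-\infty$ this requires showing $\lim_{\alpha\downarrow\alpha_0}E(\alpha)=\alpha_0$, which I would obtain from $X_\alpha^{ind}\arrowd x_0$ (the unique minimizer of $G$ in $\R^*$), writing $\E G_0(X_\alpha^{ind})$ with $G_0\doteq G-\alpha_0\ge0$ as a monotone sum over the level sets $\{n\le G_0<n+1\}$ and passing to the limit via the Portmanteau lemma. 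Together with $E(\alpha)>\alpha$, the intermediate value theorem and strict monotonicity then supply, for each $\alpha\in\I_G$, a unique $\alpha^\ddag(\alpha)\in\I_G$ with $\alpha^\ddag(\alpha)<\alpha$ and $E(\alpha^\ddag(\alpha))=\alpha$.

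I expect the main obstacle to be the identity $\sup\{h(Y):Y\in\Sigma(N;G,\alpha^\ddag(\alpha))\}=\sup\{h(Y):Y\in\Sigma_\bot(N;G,\alpha)\}$, i.e.\ showing that $X_\alpha^{ind}$ with the shifted budget actually solves the independent problem and that no independent input does strictly better. I would argue by contradiction: suppose some $X^*$ independent of $N$ with $\E G(X^*)\le\alpha$ gives $h(Y_{\alpha^\ddag(\alpha)}^\dag)<h(X^*+N)<\infty$. Using the family $X_\xi^*\doteq\xi X^*+s$ and $\hat h(\xi)\doteq h(X_\xi^*+N)$, which is continuous with $\hat h(0)=h(N)$ and $\hat h(1)=h(X^*+N)$, together with $h(Y_{\alpha^\ddag(\alpha)}^\dag)\ge h(N)$ coming from $h(X+N)\ge\max\{h(X),h(N)\}$ in the independent case, the intermediate value theorem produces $\xi_0\in[0,1)$ with $\hat h(\xi_0)=h(Y_{\alpha^\ddag(\alpha)}^\dag)$. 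Because $\xi_0<1$ compresses the range of $X^*$, a shift $s$ can be chosen so that $\E G(\xi_0X^*+s)<\E G(X^*)\le\alpha$. On the other hand, representing $Y_{\xi_0}^*\eqd g_{\xi_0}^*(N)+N$ as in Section \ref{main} and invoking uniqueness of the optimal curve $g_{\alpha^\ddag(\alpha)}$ forces $g_{\xi_0}^*=g_{\alpha^\ddag(\alpha)}$ $\ell$-almost everywhere, hence $X_{\xi_0}^*\eqd X_{\alpha^\ddag(\alpha)}^{ind}$ and $\E G(X_{\xi_0}^*)=\alpha$, contradicting the strict inequality just obtained. The delicate points here are justifying continuity of $\hat h$ and the existence of a shift lowering the expected cost without altering the output entropy.

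Finally I would collect the consequences: the equality of suprema, together with attainment at $X_\alpha^{ind}$, upgrades the independent supremum to a maximum attained by $X_\alpha^\ddag\doteq X_{\alpha^\ddag(\alpha)}^{ind}$, whence $C(N;G,\alpha)=h(Y_\alpha^\ddag)-h(N)$ and, since $Y_\alpha^\ddag\eqd Y_{\alpha^\ddag(\alpha)}^\dag$, also $C(N;G,\alpha)=h(Y_{\alpha^\ddag(\alpha)}^\dag)-h(N)$. The $\P$-almost sure uniqueness of $(X_\alpha^\ddag,Y_\alpha^\ddag)$ follows by combining the $\P$-almost sure uniqueness of $(X_\alpha^\dag,Y_\alpha^\dag)$ from Theorem \ref{thm1}, the uniqueness of $\alpha^\ddag(\alpha)$, and the fact that a distribution is determined by its Fourier transform. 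For the last assertion, if $\overline{\X}_\alpha\in L^1(\R,\B(\R),\ell)$ then standard Fourier inversion yields an absolutely continuous law for the capacity-achieving input, with density $\int_\R\overline{\X}_\alpha(f)e^{j2\pi fx}\,\ell(df)$ for $\ell$-almost every $x$.
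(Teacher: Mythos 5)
Your proposal is correct and follows essentially the same route as the paper: Bochner's theorem to construct $X_\alpha^{ind}$, strict monotonicity of $h(Y_\alpha^\dag)$ in the isoperimetric parameter plus uniqueness to get $E(\alpha)>\alpha$, the Portmanteau/monotone-convergence argument for $\lim_{\alpha\downarrow\alpha_0}E(\alpha)=\alpha_0$, and the same contradiction via the compressed-and-shifted family $X_\xi^*=\xi X^*+s$ together with uniqueness of the optimal curve. The points you flag as delicate are exactly the ones the paper handles, and handles in the same way.
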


Before concluding the section, we make the following remarks.

\begin{remark}\label{entropygainwithfeedback}
Note since $\alpha>\alpha^\ddag(\alpha)$ and $h(Y^\dag_{\alpha})$ is strictly increasing in $\alpha\in \I_G$, then for each $\alpha\in \I_G$ the difference
$$
h(Y^\dag_{\alpha})-h(Y^\ddag_{\alpha})=h(Y^\dag_{\alpha})-h(Y^\dag_{\alpha^\ddag{(\alpha)}})>0
$$
represents a bound on how much entropy at the output may be gained when going from the independent of $N$ input case to the fully dependent one, so providing an estimate for the gain of entropy at the output by the use of feedback. This subject will be pursued further in Section \ref{generalcapacity}.
\end{remark}

\begin{remark}
Note if the Fourier transform quotient
$$
\frac{\F[\Lambda_{Y^\dag_{\alpha}}]}{\F[\Lambda_N]}(f),\;\alpha\in \I_G,
$$
fails to have a positive definite continuous extension from $S_N$ to the whole of $\R$ for some $\alpha^*\in \I_G$, then by Bochner's theorem we know $X_{\alpha^*}^{ind}$ fails to exist, and therefore there will be some $\hat{\alpha}(\alpha^*)\in \I_G$ such that the corresponding capacity
$$
C(N;G,\hat{\alpha}(\alpha^*))
$$
will not be achievable. This case will be properly handled in Section \ref{existence}, Theorem \ref{thm3}.
\end{remark}

\section{Amplitude Constraints}
\label{amplitude}

We briefly discuss in this section on the case where in addition to average cost constraints, amplitude constraints are imposed in the input.

Consider the problem of finding
$$
\sup\{h(Y):Y\in\Sigma(N;G,\beta,\overline{(\xll,\xuu)})\}.
$$
Motivated from the analysis in the previous section, we are lead to looking for an optimum differential entropy at the output occurring, as before, at $X=g(N)$ for some appropriate sufficiently regular function $g$. The characterization of such a $g$ via the calculus of variations is similar to that developed in the previous context, being now necessary, however, to take into account such additional amplitude constraint. Consider first the case where both $c,d\in\R$, i.e., with $-\infty<c<d<\infty$. Then, the constraint
$$
c\leq x\leq d\;\;\text{with $x=g(n)$, $n\in\overline{(\nll,\nuu)}$,}
$$
can be taken into account by introducing in the calculus of variations formulation the \textit{holonomic} constraint (see \cite{BVB2006} for the terminology)
$$
(d-g(n))(g(n)-c)-z^2(n)=0,\;\;\;n\in\overline{(\nll,\nuu)},
$$
where we have also introduced the $n$-dependent variable $z=z(n)$ \footnote{As standard, we abuse notation and write the same $z$ for the mapping $n\longmapsto z(n)$ and the corresponding dependent variable.}. Note then
$$
(d-g(n))(g(n)-c)\geq 0,\;\;\;n\in\overline{(\nll,\nuu)}.
$$
It is easy to see that the corresponding Euler-Lagrange equation in $(\nll,\nuu)$, or more precisely its first integral as in equation (\ref{firstintegral}), now becomes, upon including this additional constraint,
\begin{equation*}
p_N(n)=C(1+g'(n))e^{-\lambda\left(\int G'(g(n))dn+G(g(n))\right)}e^{\int\frac{\mu(n)(1+g'(n))}{p_N(n)}(2g(n)-(c+d))dn},\;\;\;n\in(\nll,\nuu),
\end{equation*}
where we use the same notation as before for $\int(\cdot)dn$ (that is, denoting a corresponding primitive function) and where we have introduced the associated Lagrange multiplier function
$$
\mu:\overline{(\nll,\nuu)}\rightarrow\R
$$
satisfying
$$
\mu(n)z(n)=0,\;\;\;n\in\overline{(\nll,\nuu)}.
$$
Analogously, for $c=-\infty$ and $d<\infty$ (i.e., $x\leq d$) we have
$$
(d-g(n))-z^2(n)=0,\;\;\;n\in\overline{(\nll,\nuu)},
$$
and therefore
\begin{equation*}
p_N(n)=C(1+g'(n))e^{-\lambda\left(\int G'(g(n))dn+G(g(n))\right)}e^{\int\frac{\mu(n)(1+g'(n))}{p_N(n)}dn},\;\;\;n\in(\nll,\nuu),
\end{equation*}
and, for $c>-\infty$ and $d=\infty$ (i.e., $x\geq c$)
$$
(g(n)-c)-z^2(n)=0,\;\;\;n\in\overline{(\nll,\nuu)},
$$
and therefore
\begin{equation}\label{ELAC}
p_N(n)=C(1+g'(n))e^{-\lambda\left(\int G'(g(n))dn+G(g(n))\right)}e^{-\int\frac{\mu(n)(1+g'(n))}{p_N(n)}dn},\;\;\;n\in(\nll,\nuu).
\end{equation}

Regarding now as to appropriate boundary conditions to be imposed on $g$ when further integrating the corresponding equation above, note the natural boundary conditions
$$
\lim_{\substack{n\rightarrow\nll\\n>\nll}}\frac{p_N(n)}{1+g'(n)}=\lim_{\substack{n\rightarrow\nuu\\n<\nuu}}\frac{p_N(n)}{1+g'(n)}=0
$$
may not be compatible with the imposed amplitude constraint. However, and since the natural boundary conditions indeed represent the appropriate boundary conditions to be imposed when considering a not-fixed-end-point variational problem as we do now, we impose those conditions, from the left (i.e., $n\rightarrow\nuu,n<\nuu$), from the right (i.e., $n\rightarrow\nll,n>\nll$), or both whenever compatible with the amplitude constraint under consideration. When at least one of them cannot be applied, we may solve for the remaining unknown constants of integration appearing in $g$ by performing an standard optimum search for those constants maximizing the corresponding differential entropy at the output. The problem can so be reduced to an standard optimization problem over real-valued parameters. Equivalently, we are then considering the corresponding one- or two-fixed-end-point variational problem, and performing a maximization over such end points.

If an optimum solution to the above problem indeed exists, then in addition to provide us with the optimum solution for
$$
\sup\{h(Y):Y\in\Sigma(N;G,\beta,\overline{(\xll,\xuu)})\},
$$
it can also be exploited, by the same sort of techniques already described in the previous section, to approach the related channel capacity and capacity-achieving input distribution problem enclosed in
$$
\sup\{h(Y):Y\in\Sigma_\bot(N;G,\beta,\overline{(\xll,\xuu)})\},
$$
of course taking into account the corresponding restrictions imposed in the average cost constraint parameter $\beta$ by the amplitude constraint
$$
X\in\overline{(\xll,\xuu)},\;\;\P\text{-almost surely}.
$$

Specific cases in the next section will exemplify the described methodology.

\section{Examples}
\label{examples}

In this section we present several examples on the use of the results so far provided in the paper. We consider the cases of normally, exponentially and uniformly distributed noise, as well as important and commonly used cost functions for the related average cost constraints. Amplitude constraints are also considered. We begin with the standard case of normally distributed noise. Of course, this case can be solved by almost pure inspection. It is useful in showing, however, and as a first example, how some known classical results are recovered from our approach. In that direction, though some results are guessable a priori, the main objective of the examples provided is to describe the use of our general setting and results as a tool for a systematic approach to the problems of maximum entropy, capacity and capacity-achieving distribution computations.

In some of the examples we will consider a linear cost function $G$. Though such function cannot deliver finite optimums (see Remark \ref{linear} in Section \ref{setting}), it is considered with the purpose of showing how the unboundedness appears from our equations. In all those examples we will then include amplitude constraints, turning the optimums finite.

\subsection{Normally Distributed Noise}

Let $N\sim\mathcal{N}(0,\sigma_N^2)$, i.e., with $N$ a Gaussian RV, mean zero, variance $\sigma_N^2$ with $\sigma_N\in(0,\infty)$, density
$$
p_N(n)=\frac{1}{\sqrt{2\pi}\sigma_N}e^{-\frac{n^2}{2\sigma_N^2}},\;\;\;n\in\overline{(\nll,\nuu)}=\R,
$$
and consider the cost function $G$ given by
$$
G(x)\doteq x^2,\;\;\;x\in\R,
$$
and the maximum average cost $\beta\in(0,\infty)$ (the case $\beta=0$ being trivial).

Since $G(x)=x^2$ and therefore $G'(x)=2x$, from equation (\ref{firstintegral}) we have, upon using that $G(g(n))=g^2(n)$ and $G'(g(n))=2g(n)$,
$$
p_N(n)=C(1+g'(n))e^{-\lambda\left(2\int g(n)dn+g^2(n)\right)},\;\;\;n\in\R.
$$
In light of the preceding equation, it is clear we must look for a solution of the form
$$
g(n)=\theta n+\kappa,\;\;\;n\in\R,
$$
with $\theta$ and $\kappa$ real constants to be determined, the same as $\lambda\geq 0$ and $C>0$. By direct replacement we find
$$
\frac{1}{\sqrt{2\pi}\sigma_N}e^{-\frac{n^2}{2\sigma_N^2}}=C(1+\theta)e^{-\lambda\left(\theta(1+\theta)n^2+2\kappa(1+\theta)n+\kappa^2\right)},
$$
and hence, since $\theta=g'(n)>-1$,
$$
\kappa=0.
$$
Thus, we have
$$
\frac{1}{\sqrt{2\pi}\sigma_N}e^{-\frac{n^2}{2\sigma_N^2}}=C(1+\theta)e^{-\lambda\theta(1+\theta)n^2}
$$
and therefore
\begin{equation}\label{GC1}
\frac{1}{\sqrt{2\pi}\sigma_N}=C(1+\theta)
\end{equation}
and
\begin{equation}\label{GC2}
\frac{1}{2\sigma_N^2}=\lambda\theta(1+\theta).
\end{equation}
Also, from the isoperimetric constraint,
\begin{equation}\label{GC3}
\beta=\E G(g(N))=\int_\R g^2(n)p_N(n)\ell(dn)=\theta^2\int_\R n^2p_N(n)\ell(dn)=\theta^2\sigma_N^2
\end{equation}
where we have used that $\kappa=0$ and $\E N=0$. Since $\lambda\geq 0$ and $\theta>-1$, from (\ref{GC2}) we conclude $\lambda>0$ and $\theta>0$. In particular, from (\ref{GC3}) we then obtain
$$
\theta=\frac{\sqrt{\beta}}{\sigma_N}.
$$
Direct replacement of $\theta$ above in (\ref{GC1}) and (\ref{GC2}) finally shows that
$$
C=\frac{1}{\sqrt{2\pi}(\sigma_N+\sqrt{\beta})}
$$
and
$$
\lambda=\frac{1}{2\sqrt{\beta}(\sigma_N+\sqrt{\beta})}.
$$
Note the (natural) boundary conditions
$$
\lim_{n\rightarrow\infty}\frac{p_N(n)}{1+\theta}=\lim_{n\rightarrow-\infty}\frac{p_N(n)}{1+\theta}=0
$$
are automatically satisfied.

We then have
$$
\sup\{h(Y):Y\in\Sigma(N;G,\beta)\}=h(Y^\dag)
$$
with
$$
Y^\dag=\Sigma(X^\dag,N;G,\beta)
$$
and
$$
X^\dag=\frac{\sqrt{\beta}}{\sigma_N}N,
$$
that is,
$$
X^\dag\sim\mathcal{N}(0,\beta),\;\;\;Y^\dag\sim\mathcal{N}(0,(\sigma_N+\sqrt{\beta})^2)
$$
and
$$
h(Y^\dag)=\frac{1}{2}\ln 2\pi e(\sigma_N+\sqrt{\beta})^2.
$$
Moreover, we have
$$
C(N;G,\beta)=h(Y^\ddag)-h(N)=\frac{1}{2}\ln\frac{\sigma_N^2+\beta}{\sigma_N^2}
$$
with
$$
h(Y^\ddag)=\sup\{h(Y):Y\in\Sigma_\bot(N;G,\beta)\}
$$
and
$$
Y^\ddag=\Sigma_\bot(X^\ddag,N;G,\beta),
$$
and with the probability law of $X^\ddag$ being associated to
$$
\overline{\X}_\beta(f)=\X_\beta(f)=e^{-2\pi^2\beta f^2},\;\;\;f\in\R,
$$
that is,
$$
X^\ddag\sim\mathcal{N}(0,\beta)
$$
and therefore
$$
Y^\ddag\sim\mathcal{N}(0,\sigma_N^2+\beta).
$$

\subsection{Exponentially Distributed Noise}

Let $N\sim\exp(\mu_N^{-1})$, i.e., with $N$ an exponentially distributed RV, parameter $\mu_N^{-1}\in(0,\infty)$, that is, mean $\mu_N$ and density
$$
p_N(n)=\frac{1}{\mu_N}e^{-\frac{1}{\mu_N}n},\;\;\;n\in\overline{(\nll,\nuu)}=[0,\infty).
$$
We discuss the cases with average cost constraints and with average cost and amplitude constraints separately. In both of them we consider the cost function $G$ given by
$$
G(x)\doteq x,\;\;\;x\in\R.
$$

\subsubsection{Average Cost Constraints}

Consider a maximum average cost $\beta\in\R$. Since $G(x)=x$ and therefore $G'(x)=1$, from equation (\ref{firstintegral}) we have, upon using that $G(g(n))=g(n)$ and $G'(g(n))=1$,
$$
p_N(n)=C(1+g'(n))e^{-\lambda(n+g(n))},\;\;\;n\in(0,\infty).
$$
It is clear that a solution to the previous equation must be of the form
$$
g(n)=\theta n+\kappa,\;\;\;n\in(0,\infty),
$$
with $\theta>-1$ and $\kappa$ real constants. However, and since then $g'(n)=\theta$, the (natural) boundary condition at $0+$,
$$
\lim_{\substack{n\rightarrow 0\\n>0}}\frac{p_N(n)}{1+\theta}=0,
$$
cannot be satisfied being
$$
\lim_{\substack{n\rightarrow 0\\n>0}}p_N(n)=p_N(0)=\frac{1}{\mu_N}>0.
$$

On the other hand, note with $g$ as above
$$
\E G(g(N))=\int_{(0,\infty)}\frac{(\theta n+\kappa)}{\mu_N}e^{-\frac{n}{\mu_N}}\ell(dn)=\theta\mu_N+\kappa,
$$
and therefore the isoperimetric constraint reads
\begin{equation}\label{EqT}
\theta\mu_N+\kappa=\beta,\;\;\;\text{i.e.,}\;\;\;\theta=\frac{\beta-\kappa}{\mu_N}.
\end{equation}
Since
$$
\lim_{\theta\rightarrow\infty}\frac{p_N(0)}{1+\theta}=0
$$
and from (\ref{EqT}) we have that $\theta$ does indeed increase to $\infty$ as $\kappa$ decreases to $-\infty$, we are lead to consider the sequences of RVs $\{X_i\}_{i\in\mathbb{N}}$ and $\{Y_i\}_{i\in\mathbb{N}}$ given by \footnote{$\mathbb{N}\doteq\{1,2,\ldots\}$.}
$$
X_i\doteq\frac{\beta+i}{\mu_N}N-i
$$
and
$$
Y_i\doteq X_i+N=\Sigma(X_i,N;G,\beta).
$$
Then, it is straightforward to check that
$$
h(Y_i)=\ln e|\beta+\mu_N+i|
$$
and therefore
$$
\lim_{i\rightarrow\infty}h(Y_i)=\infty.
$$
Thus, we have as expected (see Remark \ref{linear} in Section \ref{setting})
$$
\sup\{h(Y):Y\in\Sigma(N;G,\beta)\}=\infty,
$$
i.e., no finite extremum exists. Note also that
$$
\F[\Lambda_{Y_i}](f)=\frac{e^{j2\pi fi}}{1+j2\pi f(\beta+\mu_N+i)},\;\;\;f\in\R,
$$
and
$$
\F[\Lambda_{N}](f)=\frac{1}{1+j2\pi f\mu_N},\;\;\;f\in\R,
$$
and thus
$$
\frac{\F[\Lambda_{Y_i}]}{\F[\Lambda_{N}]}(f)=\frac{1+j2\pi f\mu_N}{1+j2\pi f(\beta+\mu_N+i)}e^{j2\pi fi},\;\;\;f\in\R,
$$
giving then the corresponding generalized density (in a tempered distributions sense) and for sufficiently large $i$ (such that $\beta+\mu_N+i>0$)
$$
\frac{(\beta+i)e^{-\frac{x+i}{\beta+\mu_N+i}}}{(\beta+\mu_N+i)^2}u(x+i)+\frac{\mu_N}{\beta+\mu_N+i}\delta(x+i),\;\;\;x\in\R,
$$
with $u(t)=1$ if $t\geq 0$, $u(t)=0$ otherwise, and with $\delta$ the Dirac delta (generalized) function. Since the mean value associated to the previous density is also $\E X_i=\beta$, we conclude that
$$
\sup\{h(Y):Y\in\Sigma_\bot(N;G,\beta)\}=\infty
$$
and therefore
$$
C(N;G,\beta)=\infty,
$$
as expected too (again, see Remark \ref{linear} in Section \ref{setting}).

\subsubsection{Average Cost and Amplitude Constraints}

We consider the case where the amplitude constraint
$$
X\in[0,\infty),\;\;\P\text{-almost surely},
$$
is also brought into the picture \footnote{The case when $X$ is restricted to lie in $[c,\infty)$ for any fixed $c\in\R$ does not add any generality since it is recovered from the present one by just shifting and using that $h(Z+c)=h(Z)$.}, along with the maximum average cost $\beta\in(0,\infty)$ (the case $\beta=0$ being now trivial).

In light of the previous case, and since
$$
g(n)=\theta n+\kappa\geq 0,\;\;\;n\in(0,\infty),
$$
when $\theta\geq 0$ and $\kappa\geq 0$, we are lead to consider equation (\ref{ELAC}) with $\mu\equiv 0$, i.e., so retaining the same form for $g$ as above. Direct replacement then shows that
\begin{eqnarray*}
\frac{1}{\mu_N}e^{-\frac{1}{\mu_N}n}&=&C(1+\theta)e^{-\lambda\left((1+\theta)n+\kappa\right)}\\&=&C(1+\theta)e^{-\lambda\kappa}e^{-\lambda(1+\theta)n}
\end{eqnarray*}
and therefore
\begin{equation}\label{EC1}
\frac{1}{\mu_N}=C(1+\theta)e^{-\lambda\kappa}
\end{equation}
and
\begin{equation}\label{EC2}
\frac{1}{\mu_N}=\lambda(1+\theta).
\end{equation}
As before, the isoperimetric constraint gives
\begin{equation}\label{EC3}
\theta=\frac{\beta-\kappa}{\mu_N}.
\end{equation}
Equations (\ref{EC1}), (\ref{EC2}) and (\ref{EC3}) give us three equations for the four unknowns $C,\lambda,\theta$ and $\kappa$. On the other hand, the (natural) boundary condition at $\infty$
$$
\lim_{n\rightarrow\infty}\frac{p_N(n)}{1+\theta}=0
$$
is trivially satisfied, being impossible to fulfill, as mentioned in the previous case, its counterpart at $0+$. However, since $g$ is a straight line, we may therefore exploit the equations above to reduce the problem to an optimal search over $\kappa\geq 0$, requiring of course for the corresponding slopes to be non negative as well, i.e., considering as admissible the values $\kappa\in[0,\beta]$. Note from (\ref{EC2}) and (\ref{EC1}), $\theta\geq 0$ in particular implies $\lambda>0$ and $C>0$. In fact,
$$
\lambda=\frac{1}{\mu_N+\beta-\kappa}\;\;\;\text{and}\;\;\;C=\frac{e^{\frac{\kappa}{\mu_N+\beta-\kappa}}}{\mu_N+\beta-\kappa}.
$$

It is straightforward to see that the density at the output $p_Y$ is given by
$$
p_Y(y)=\frac{1}{\mu_N(1+\theta)}e^{-\frac{y-\kappa}{\mu_N(1+\theta)}},\;\;\;y\geq\kappa,
$$
$0$ otherwise, and therefore its associated differential entropy
$$
h(Y)=\ln e\mu_N(1+\theta)
$$
which is strictly increasing in $\theta$. Thus, the optimum takes place at $\kappa=0$ where $\theta$ achieves its maximum allowable value
$$
\theta=\frac{\beta}{\mu_N}.
$$

We then have
$$
\sup\{h(Y):Y\in\Sigma(N;G,\beta)\}=h(Y^\dag)
$$
with
$$
Y^\dag=\Sigma(X^\dag,N;G,\beta)
$$
and
$$
X^\dag=\frac{\beta}{\mu_N}N,
$$
that is,
$$
X^\dag\sim\exp(\beta^{-1}),\;\;\;Y^\dag\sim\exp((\mu_N+\beta)^{-1})
$$
and
$$
h(Y^\dag)=\ln e(\mu_N+\beta).
$$
Moreover, and restricting attention to $\beta\in G([0,\infty))$, we have
$$
C(N;G,\beta)=h(Y^\ddag)-h(N)=\ln\frac{\mu_N+\beta}{\mu_N}
$$
with
$$
h(Y^\ddag)=\sup\{h(Y):Y\in\Sigma_\bot(N;G,\beta)\}
$$
and
$$
Y^\ddag=\Sigma_\bot(X^\ddag,N;G,\beta),
$$
and with the probability law of $X^\ddag$ being associated to
$$
\overline{\X}_\beta(f)=\X_\beta(f)=\frac{1+j2\pi f\mu_N}{1+j2\pi f (\mu_N+\beta)},\;\;\;f\in\R,
$$
that is, with generalized density
\begin{equation}\label{CADEC}
\frac{\beta e^{-\frac{x}{\beta+\mu_N}}}{(\beta+\mu_N)^2}u(x)+\frac{\mu_N}{\beta+\mu_N}\delta(x),\;\;\;x\in\R,
\end{equation}
supported accordingly in $[0,\infty)$, and therefore in this case we obtain, due to the amplitude constraint that has been brought into the picture, that
$$
Y^\ddag\eqd Y^\dag,
$$
i.e.,
\begin{equation}\label{CADEC2}
Y^\ddag\sim\exp((\mu_N+\beta)^{-1}).
\end{equation}
Note the amplitude constraint turns channel capacity finite with, in the jargon of Section \ref{main2} and since $G$ is linear and therefore the average cost constraint is equally satisfied by a jointly distributed with $N$ input RV as well as by an independent of $N$ input RV,
$$
\beta^\ddag(\beta)=\beta.
$$

Of course, the capacity-achieving input distribution associated to (\ref{CADEC}), as well as its corresponding output distribution in (\ref{CADEC2}), are known in the literature and they correspond to the also known fact that a non-negative RV with mean $\mu_N+\beta$ cannot have a higher differential entropy than in the exponentially distributed case, that is, $\ln e(\mu_N+\beta)$ (see \cite{SV1996}).

\subsection{Uniformly Distributed Noise}

Let $N\sim\mathcal{U}([-\vartheta,\vartheta])$, i.e., with $N$ a uniformly distributed RV over $[-\vartheta,\vartheta]$, $0<\vartheta<\infty$, density
$$
p_N(n)=\frac{1}{2\vartheta},\;\;\;n\in\overline{(\nll,\nuu)}=[-\vartheta,\vartheta].
$$
We discuss the cases with average cost constraints and with average cost and amplitude constraints separately. In both of them we consider, as in the exponentially distributed noise case, the cost function $G$ given by
$$
G(x)\doteq x,\;\;\;x\in\R.
$$

\subsubsection{Average Cost Constraints}

Consider a maximum average cost $\beta\in\R$. Proceeding as before, from equation (\ref{firstintegral}) we have
$$
p_N(n)=C(1+g'(n))e^{-\lambda(n+g(n))},\;\;\;n\in(-\vartheta,\vartheta),
$$
i.e.,
$$
\frac{1}{2\vartheta}=C(1+g'(n))e^{-\lambda(n+g(n))},\;\;\;n\in(-\vartheta,\vartheta).
$$
Alternatively, and in terms of
\begin{equation*}
y=f(n)=g(n)+n
\end{equation*}
(see equation (\ref{rel2}) and Remark \ref{ELAlt}), we have, upon using that $G'\equiv 1$,
$$
f^{-1'}(y)=2\vartheta Ce^{-\lambda y}
$$
with $y$ ranging in some appropriate interval. Assume by the time being $\lambda>0$. Then
$$
f^{-1}(y)=-\frac{2\vartheta C}{\lambda}e^{-\lambda y}+\kappa
$$
with $\kappa$ a real constant, and therefore
$$
g(n)=-n+f(n)=-n-\frac{1}{\lambda}\ln\frac{\lambda(\kappa-n)}{2\vartheta C}
$$
which in turn implies
$$
g'(n)=-1+\frac{1}{\lambda(\kappa-n)}.
$$
From the (natural) boundary condition at $\vartheta-$,
$$
\lim_{\substack{n\rightarrow \vartheta\\n<\vartheta}}\frac{p_N(n)}{1+g'(n)}=0,
$$
we conclude $\kappa=\vartheta$, and thus
$$
g'(n)=-1+\frac{1}{\lambda(\vartheta-n)}.
$$
Hence
\begin{equation}\label{NLg}
\lim_{\substack{n\rightarrow -\vartheta\\n>-\vartheta}}\frac{p_N(n)}{1+g'(n)}=\lambda>0,
\end{equation}
showing then the (natural) boundary condition at $(-\vartheta)+$ cannot be satisfied. Therefore, we look for a solution $g$ with $\lambda=0$. We then have
$$
f^{-1'}(y)=2\vartheta C
$$
from where
$$
g(n)=\frac{1-2\vartheta C}{2\vartheta C}n-\frac{\kappa}{2\vartheta C}
$$
and therefore the (natural) boundary conditions
$$
\lim_{\substack{n\rightarrow -\vartheta\\n>-\vartheta}}\frac{p_N(n)}{1+g'(n)}=\lim_{\substack{n\rightarrow \vartheta\\n<\vartheta}}\frac{p_N(n)}{1+g'(n)}=0
$$
cannot be satisfied either. However, since
$$
g'(n)=\frac{1-2\vartheta C}{2\vartheta C}
$$
and
$$
\lim_{\substack{C\rightarrow 0\\C>0}}\frac{1-2\vartheta C}{2\vartheta C}=\infty,
$$
and also from the isoperimetric constraint
$$
\E G(g(N))=\int_{(-\vartheta,\vartheta)}g(n)p_N(n)\ell(dn)=-\frac{\kappa}{2\vartheta C}=\beta,
$$
we are lead to consider the sequences of RVs $\{X_i\}_{i\in\mathbb{N}}$ and $\{Y_i\}_{i\in\mathbb{N}}$ given by \footnote{We may alternatively consider the previous $g$ and, from equation (\ref{NLg}), construct sequences with $\lambda_i$ decreasing to $0$ as $i$ increases to infinite.}
$$
X_i\doteq\frac{1-2\vartheta\frac{1}{i}}{2\vartheta\frac{1}{i}}N+\beta
$$
and
$$
Y_i\doteq X_i+N=\Sigma(X_i,N;G,\beta).
$$
Then, it is straightforward to check that each
$$
Y_i\sim\mathcal{U}([\beta-i2^{-1},\beta+i2^{-1}])
$$
and therefore
$$
h(Y_i)=\ln i,
$$
from where
$$
\lim_{i\rightarrow\infty}h(Y_i)=\infty.
$$
Thus, we have as expected (see Remark \ref{linear} in Section \ref{setting})
$$
\sup\{h(Y):Y\in\Sigma(N;G,\beta)\}=\infty,
$$
i.e., no finite extremum exists. Note also that, and being direct to verify,
$$
\frac{\F[\Lambda_{Y_i}]}{\F[\Lambda_{N}]}(f)=e^{-j2\pi f\beta}\frac{2\vartheta}{i}\frac{\sin \pi fi}{\sin 2\pi f\vartheta},\;\;\;f\neq\frac{k}{2\vartheta},\;\;\;k\in\mathbb{Z},
$$
where $\mathbb{Z}\doteq\{0,\pm 1,\pm 2,\ldots\}$. We may also set
$$
\frac{\F[\Lambda_{Y_i}]}{\F[\Lambda_{N}]}(0)\doteq 1=\lim_{f\rightarrow 0}\frac{\F[\Lambda_{Y_i}]}{\F[\Lambda_{N}]}(f).
$$
By appropriate re-scaling if necessary, and by the denseness of the rational numbers $\mathbb{Q}$ in $\R$ and the fact that $h(cZ)=h(Z)+\ln|c|$ for the differential entropy of $cZ$ and $c\in\R$, we may assume in what follows and w.l.o.g. $\vartheta\in\mathbb{Q}$, i.e., being in addition strictly positive, $\vartheta\in\mathbb{Q}\cap(0,\infty)$. Then, there exist strictly positive integers $q$ and $r$ such that
$$
\vartheta=\frac{q}{r}.
$$
Thus
$$
\vartheta=\frac{2q}{2r}\doteq\frac{i_1}{2r}=\frac{i_1k}{2rk}\doteq\frac{i_k}{2r_k},\;\;\;k\in\mathbb{N},
$$
and therefore there exists a subsequence $\{Y_{i_k}\}_{k\in\mathbb{N}}\subseteq\{Y_i\}_{i\in\mathbb{N}}$ such that
$$
\frac{\F[\Lambda_{Y_{i_k}}]}{\F[\Lambda_{N}]}(f)=e^{-j2\pi f\beta}\frac{2\vartheta}{i_k}\frac{\sin \pi fi_k}{\sin \pi f\frac{i_k}{r_k}},\;\;\;f\neq 0,
$$
retaining of course the same value at $f=0$ as before. That this last expression is not only well defined but also continuous can be easily verified. Indeed, by standard trigonometric expansions,
$$
\sin \pi fi_k=\sin r_k\pi f\frac{i_k}{r_k}=\sum_{l=1}^{r_k-1}\binom{r_k}{1}\psi_{l,0}(f)
$$
with
\begin{equation*}
\psi_{l,m}(f)\doteq\cos^l\left(\pi f\frac{i_k}{r_k}\right)\sin^{r_k-l-m}\left(\pi f\frac{i_k}{r_k}\right)\sin\left(\frac{\pi}{2}(r_k-l)\right),
\end{equation*}
and therefore
$$
\frac{\sin \pi fi_k}{\sin \pi f\frac{i_k}{r_k}}=\sum_{l=1}^{r_k-1}\binom{r_k}{1}\psi_{l,1}(f)
$$
which is a trigonometric polynomial in $f$. Moreover, by expressing the trigonometric functions appearing in $\psi_{l,1}$ in terms of complex exponentials, and upon taking corresponding inverse Fourier transformations, it can be easily seen that in fact the function
\begin{displaymath}
\frac{\F[\Lambda_{Y_{i_k}}]}{\F[\Lambda_{N}]}(f)= \left\{ \begin{array}{ll}
e^{-j2\pi f\beta}\frac{2\vartheta}{i_k}\frac{\sin \pi fi_k}{\sin \pi f\frac{i_k}{r_k}} & \textrm{if $f\neq 0$}\\
1 & \textrm{if $f=0$}
\end{array} \right.
\end{displaymath}
is a positive definite function corresponding to a pure atomic probability distribution over $\R$ (i.e., concentrating its total probability mass in a discrete set, which in fact is obviously finite). Since the mean value associated to the previous distribution is also $\E X_{i_k}=\beta$, we finally conclude that
$$
\sup\{h(Y):Y\in\Sigma_\bot(N;G,\beta)\}=\infty
$$
and therefore
$$
C(N;G,\beta)=\infty,
$$
as expected too (again, see Remark \ref{linear} in Section \ref{setting}).

\subsubsection{Average Cost and Amplitude Constraints}

As for the exponentially distributed noise case, we now consider the case where the amplitude constraint
$$
X\in[0,\infty),\;\;\P\text{-almost surely},
$$
is also brought into the picture, along with the maximum average cost $\beta\in(0,\infty)$ (the case when $\beta=0$ being now correspondingly trivial).

In light of the previous case, we are lead to consider firstly equation (\ref{ELAC}) with $\mu\equiv 0$ and assuming $\lambda>0$, i.e., with $g$ of the form
\begin{equation}\label{NNg}
g(n)=-n-\frac{1}{\lambda}\ln\frac{\lambda(\kappa-n)}{2\vartheta C},\;\;\;n\in(-\vartheta,\vartheta),
\end{equation}
which can be clearly made non-negative. As we will see, this will be the case when
$$
\beta\geq\vartheta\;\;\;\text{and}\;\;\;(1-2e^{-1})\vartheta\leq\beta<\vartheta.
$$
The case when
$$
0<\beta<(1-2e^{-1})\vartheta
$$
will require a non-identically null function $\mu$. Each of the three cases above will now be considered separately.

\paragraph{Case $\beta\geq\vartheta$}

As before, and from equation (\ref{NNg}) and the (natural) boundary condition at $\vartheta-$ we conclude $\kappa=\vartheta$, and thus
$$
g(n)=-n-\frac{1}{\lambda}\ln\frac{\lambda(\vartheta-n)}{2\vartheta C}.
$$
Note then $g(n)$ increases to infinity as $n$ approaches $\vartheta$ from below, the same as
$$
g'(n)=-1+\frac{1}{\lambda(\vartheta-n)}.
$$
Note also
$$
\lim_{\substack{n\rightarrow -\vartheta\\n>-\vartheta}}g'(n)=-1+\frac{1}{2\lambda\vartheta}\geq 0,
$$
and in fact
$$
g'(n)>0,\;\;\;n\in(-\vartheta,\vartheta),
$$
when
$$
\lambda\leq\frac{1}{2\vartheta}.
$$
Since
$$
\E G(g(N))=\int_{(-\vartheta,\vartheta)}g(n)p_N(n)\ell(dn)=\frac{1}{\lambda}\left(1-\ln\frac{\lambda}{C}\right),
$$
from the isoperimetric constraint we obtain
\begin{equation}\label{UNE1}
\ln\frac{\lambda}{C}=1-\lambda\beta\;\;\;\text{or, equivalently,}\;\;\;C=\lambda e^{\lambda\beta-1}.
\end{equation}
As in the preceding case, the (natural) boundary condition at $(-\vartheta)+$ cannot be satisfied. However, since from the previous equation we may express, say, $C$ in terms of $\lambda$, we may reduce the problem to a one-dimensional optimal search treating then $\lambda$ as a parameter.
Towards that direction, we set
$$
x_{(-\vartheta)}\doteq\lim_{\substack{n\rightarrow -\vartheta\\n>-\vartheta}}g(n)=\vartheta-\frac{1}{\lambda}\ln\frac{\lambda}{C},
$$
i.e., and from equation (\ref{UNE1}),
$$
x_{(-\vartheta)}=\vartheta+\beta-\frac{1}{\lambda},
$$
and, correspondingly,
$$
y_{(-\vartheta)}\doteq x_{(-\vartheta)}-\vartheta=\beta-\frac{1}{\lambda}.
$$
It is straightforward to see that the density at the output $p_Y$ is given by
\begin{equation}\label{ODen}
p_Y(y)=Ce^{-\lambda y},\;\;\;y\geq y_{(-\vartheta)},
\end{equation}
$0$ otherwise, and therefore, upon using again equation (\ref{UNE1}), we have for the corresponding differential entropy
$$
h(Y)=1-\ln\lambda,
$$
which is strictly decreasing in $\lambda>0$. Thus, the maximum entropy at the output is so associated to the minimum such attainable $\lambda$, subject of course to the constraints $x_{(-\vartheta)}\geq 0$ and $g(n)\geq 0$, $n\in(-\vartheta,\vartheta)$. Note since $\lambda>0$, $x_{(-\vartheta)}\geq 0$ and
$$
x_{(-\vartheta)}=\vartheta+\beta-\frac{1}{\lambda}\;\;\;\text{or, equivalently,}\;\;\;\lambda=\frac{1}{\vartheta+\beta-x_{(-\vartheta)}},
$$
we in fact have
$$
\lambda\in[(\beta+\vartheta)^{-1},\infty)\;\;\;\text{and}\;\;\;x_{(-\vartheta)}\in[0,\beta+\vartheta).
$$
Therefore, since by setting
$$
\lambda=\frac{1}{\beta+\vartheta}
$$
we have
$$
x_{(-\vartheta)}=0
$$
and, moreover,
$$
\lambda\leq\frac{1}{2\vartheta}
$$
from where
$$
g'(n)>0,\;\;\;n\in(-\vartheta,\vartheta),
$$
we conclude
\begin{equation}\label{SE}
\sup\{h(Y):Y\in\Sigma(N;G,\beta)\}=h(Y^\dag)=\ln e(\beta+\vartheta)
\end{equation}
where
$$
Y^\dag=\Sigma(X^\dag,N;G,\beta),
$$
$$
X^\dag=-N-\frac{1}{\lambda}\ln\frac{\lambda(\vartheta-N)}{2\vartheta C},
$$
$$
\lambda=\frac{1}{\beta+\vartheta}\;\;\;\text{and}\;\;\;C=\frac{e^{-\frac{\vartheta}{\beta+\vartheta}}}{\beta+\vartheta}.
$$
Note since (see equation (\ref{ODen}))
$$
p_{Y^\dag}(y)=\frac{1}{\beta+\vartheta}e^{-\frac{y+\vartheta}{\beta+\vartheta}},\;\;\;y\geq-\vartheta,
$$
$0$ otherwise, and therefore $Y^\dag$ is distributed as $Z-\vartheta$ with $Z\sim\exp((\beta+\vartheta)^{-1})$, we have $h(Y^\dag)=h(Z)$, in agreement with equation (\ref{SE}).

Regarding capacity, and restricting attention to $\beta\in G([\vartheta,\infty))$, we have the channel capacity upper bound
\begin{eqnarray}\label{cce}
    C(N;G,\beta)&\leq&\ln e(\beta+\vartheta)-\ln 2\vartheta\nonumber\\&=&\ln\frac{e(\beta+\vartheta)}{2\vartheta}
\end{eqnarray}
since
$$
\sup\{h(Y):Y\in\Sigma_\bot(N;G,\beta)\}\leq\ln e(\beta+\vartheta)
$$
and
$$
h(N)=\ln 2\vartheta.
$$
However, in this case, as straightforward to verify,
$$
\X_\beta(f)=\frac{e^{j\frac{2\pi f\vartheta(\beta-\vartheta)}{\beta+\vartheta}}}{1+j2\pi f(\beta+\vartheta)}\frac{2\pi f\vartheta}{\sin 2\pi f\vartheta},\;\;\;f\in S_N,
$$
and therefore, regardless of the value of $\beta$, a corresponding positive definite continuous extension $\overline{\X}_\beta$ does not exist since
$$
|\X_\beta(f)|\rightarrow\infty\;\;\;\text{as}\;\;\;f\rightarrow\frac{k}{2\vartheta}
$$
for each $k\in\{\pm 1,\pm 2,\ldots\}$, i.e., in particular $\X_\beta$ is not bounded in $S_N$ for any $\beta$. Thus, channel capacity is not achievable. Though not achievable, but since $G$ is linear and therefore the average cost constraint is equally satisfied by a jointly distributed with $N$ input RV as well as by an independent of $N$ input RV, and as the approximation scheme to be developed in Section \ref{existence} will show, where a the noise RV $N$ as the one at hand will be approximated by a sequence of noise RVs $\{N_k\}_{k\geq 0}$ with each $N_k$ such that $S_{N_k}=\R$, we in fact have that the bound in (\ref{cce}) indeed correspond to channel capacity, that is, we have
$$
C(N;G,\beta)=\ln\frac{e(\beta+\vartheta)}{2\vartheta}.
$$

\paragraph{Case $(1-2e^{-1})\vartheta\leq\beta<\vartheta$}

Proceeding as in the previous case, we have
$$
g(n)=-n-\frac{1}{\lambda}\ln\frac{\lambda(\vartheta-n)}{2\vartheta C}.
$$
However, since $\lambda\in[(\beta+\vartheta)^{-1},\infty)$ and now $\beta<\vartheta$, we have
$$
\lambda>\frac{1}{2\vartheta}
$$
and therefore
$$
\lim_{\substack{n\rightarrow -\vartheta\\n>-\vartheta}}g'(n)=-1+\frac{1}{2\lambda\vartheta}<0.
$$
It is straightforward to see that $g$ indeed decreases from $x_{(-\vartheta)}$ at $-\vartheta$, to reach its minimum
$$
g(\zeta)=-\vartheta+\frac{1}{\lambda}+\frac{1}{\lambda}\ln 2\vartheta C
$$
at
$$
\zeta\doteq\vartheta-\frac{1}{\lambda},
$$
to then increase to infinity as $n$ approaches $\vartheta$ (from below). Thus, maximum entropy at the output is attained for the minimum value of $\lambda\geq(\beta+\vartheta)^{-1}$ such that $g(\zeta)\geq 0$, i.e., such that
$$
-\vartheta+\frac{1}{\lambda}+\frac{1}{\lambda}\ln 2\vartheta C\geq 0
$$
or, equivalently, and after using equation (\ref{UNE1}), such that
$$
\Lambda(\lambda)\doteq\frac{\ln 2\vartheta\lambda}{\lambda}\geq\vartheta-\beta.
$$
Since also
$$
(1-2e^{-1})\vartheta\leq\beta<\vartheta,
$$
it is easy to see that the equation
$$
\Lambda(\lambda)=\vartheta-\beta
$$
has two solutions, which coincide when $\beta=(1-2e^{-1})\vartheta$ and with both lying in the interval $((\beta+\vartheta)^{-1},\infty)$, and that indeed the desired $\lambda$ corresponds to the minimum between them, that is,
$$
\lambda=\min\{\xi>(\beta+\vartheta)^{-1}:\Lambda(\xi)=\vartheta-\beta\}.
$$
All the results of the previous case apply the same here \textit{mutatis-mutandis}. In particular,
\begin{equation*}
\sup\{h(Y):Y\in\Sigma(N;G,\beta)\}=1-\ln\lambda=1-\ln\min\{\xi>(\beta+\vartheta)^{-1}:\Lambda(\xi)=\vartheta-\beta\}
\end{equation*}
and
$$
 C(N;G,\beta)=\ln\frac{e}{2\vartheta\min\{\xi>(\beta+\vartheta)^{-1}:\Lambda(\xi)=\vartheta-\beta\}},
$$
being channel capacity not achievable. Note when
$$
\Lambda(\beta^{-1})=\vartheta-\beta,
$$
i.e., when
$$
\frac{\ln 2\vartheta\frac{1}{\beta}}{\frac{1}{\beta}}=\vartheta-\beta
$$
(which is possible in the considered range), we have
$$
\lambda=C=\frac{1}{\beta}
$$
and therefore (see equation (\ref{ODen}))
$$
p_{Y^\dag}(y)=\frac{1}{\beta}e^{-\frac{y}{\beta}},\;\;\;y\geq 0,
$$
$0$ otherwise, that is,
$$
Y^\dag\sim\exp(\beta^{-1}),
$$
in agreement with $x_{(-\vartheta)}=\vartheta$ and $y_{(-\vartheta)}=0$.

\paragraph{Case $0<\beta<(1-2e^{-1})\vartheta$}

From the previous cases we conclude that no solution for $g$ of the form
$$
-n-\frac{1}{\lambda}\ln\frac{\lambda(\vartheta-n)}{2\vartheta C}
$$
can be made non-negative in the present case. However, it is easy to see that we must now look for a solution of the form
\begin{displaymath}
g(n)= \left\{ \begin{array}{ll}
-n-\frac{1}{\lambda_1}\ln\frac{\lambda_1(\kappa_1-n)}{2\vartheta C_1}  & \textrm{if $n\in[-\vartheta,n_1]$}\\
0 & \textrm{if $n\in(n_1,n_2]$}\\
-n-\frac{1}{\lambda_2}\ln\frac{\lambda_2(\vartheta-n)}{2\vartheta C_2}  & \textrm{if $n\in(n_2,\vartheta)$}
\end{array} \right.
\end{displaymath}
with
$\lambda_1$, $\kappa_1$, $C_1$, $n_1$, $n_2$, $\lambda_2$ and $C_2$ real constants such that
$$
\lambda_1, C_1, \lambda_2, C_2>0,\;\;\;\kappa_1-n_1>0
$$
and
$$
-\vartheta\leq n_1<n_2<\vartheta.
$$
(Note we are considering equation (\ref{ELAC}) with $\mu\equiv 0$ where $g$ above is $>0$ and $\mu\equiv$ constant $\neq 0$ otherwise, the actual value of this constant being irrelevant.) Note as before, the (natural) boundary condition at $\vartheta-$ has already been imposed. The available equations from continuity requirements are
$$
g(n_i-)\doteq\lim_{\substack{n\rightarrow n_i\\n<n_i}}g(n)=\lim_{\substack{n\rightarrow n_i\\n>n_i}}g(n)\doteq g(n_i+)
$$
and
$$
g'(n_i-)\doteq\lim_{\substack{n\rightarrow n_i\\n<n_i}}g'(n)=\lim_{\substack{n\rightarrow n_i\\n>n_i}}g'(n)\doteq g'(n_i+),
$$
both for $i=1,2$, and where for sake of well definiteness we may w.l.o.g. assume $n_1>-\vartheta$ in the computation of $g(n_1-)$ and $g'(n_1-)$ above (the case $n_1=-\vartheta$ being included in the resulting equations). Also, from the isoperimetric condition,
$$
\int_{(-\vartheta,\vartheta)}g(n)\ell(dn)=2\vartheta\beta.
$$
We so have five equations for the seven unknowns $\lambda_1$, $\kappa_1$, $C_1$, $n_1$, $n_2$, $\lambda_2$ and $C_2$, and therefore we may choose to express five of them in terms of the remaining two. For convenience we treat $\lambda_1$ and $n_1$ as the independent parameters and express $\kappa_1$, $C_1$, $n_2$, $\lambda_2$ and $C_2$ accordingly. After some algebraic manipulations we find
$$
\kappa_1=n_1+\frac{1}{\lambda_1}
$$
and
$$
C_1=\frac{e^{\lambda_1n_1}}{2\vartheta},
$$
and
$$
C_2=\frac{e^{\lambda_2(\vartheta-\frac{1}{\lambda_2})}}{2\vartheta},
$$
$$
n_2=\vartheta-\frac{1}{\lambda_2}
$$
with
$$
\lambda_2=\frac{1}{\sqrt{\Lambda_2(\lambda_1,n_1)}}
$$
and where
\begin{equation*}
\Lambda_2(\lambda_1,n_1)\doteq(\vartheta^2-n_1^2-4\vartheta\beta)-\frac{2}{\lambda_1}
-\frac{2}{\lambda_1^2}\left(\lambda_1(\vartheta+n_1)+1\right)\ln\frac{\lambda_1(\vartheta+n_1)+1}{e^{\lambda_1n_1}}.
\end{equation*}
Therefore, and upon expressing the corresponding entropy at the output $h(Y)$ in terms of $\lambda_1$ and $n_1$, abusing notation denoted as $h(\lambda_1,n_1)$, we must perform the associated maximization of $h(\lambda_1,n_1)$ over $(\lambda_1,n_1)$ in the valid domain $\mathcal{D}$ with
$$
\mathcal{D}\doteq\left\{(\lambda_1,n_1)\in\mathcal{D}_1:0<\Lambda_2(\lambda_1,n_1)<(\vartheta-n_1)^2\right\}
$$
and where
$$
\mathcal{D}_1\doteq\left\{(\lambda_1,n_1)\in\R^2:\lambda_1>0, -\vartheta\leq n_1<\vartheta\right\}.
$$
(Note $(\lambda_1,n_1)\in\mathcal{D}$ guarantees for $\lambda_2$ to be well defined and for $n_1<n_2$, as required.) Since the density at the output $p_Y$ is easily seen to be
\begin{displaymath}
p_Y(y)= \left\{ \begin{array}{ll}
C_1e^{-\lambda_1y}  & \textrm{if $y\in[-\frac{1}{\lambda_1}\ln\frac{\lambda_1(\kappa_1+\vartheta)}{2\vartheta C_1},n_1]$}\\
\frac{1}{2\vartheta} & \textrm{if $y\in(n_1,n_2]$}\\
C_2e^{-\lambda_2y}  & \textrm{if $y\in(n_2,\infty)$}
\end{array} \right.
\end{displaymath}
and $0$ otherwise, after some algebra we find
\begin{equation*}
h(Y)=C_1e^{-\lambda_1n_1}\left(-n_1+\frac{1}{\lambda_1}\ln\frac{C_1}{e}\right)+\frac{1}{2\vartheta}\left((\kappa_1+\vartheta)\ln\frac{2\vartheta e}{\lambda_1(\kappa_1+\vartheta)}+\ln 2\vartheta\right)
+C_2e^{-\lambda_2n_2}\left(n_2-\frac{1}{\lambda_2}\ln\frac{C_2}{e}\right)
\end{equation*}
which, and in light of the previous relationships, gives the corresponding expression for $h(\lambda_1,n_1)$ after replacements. Therefore, if indeed a maximum
$$
\max\{h(\xi_1,\xi_2):(\xi_1,\xi_2)\in\mathcal{D}\}
$$
exists, then we have with
$$
(\lambda_1,n_1)=\am\{h(\xi_1,\xi_2):(\xi_1,\xi_2)\in\mathcal{D}\}
$$
that
\begin{equation*}
\sup\{h(Y):Y\in\Sigma(N;G,\beta)\}=h(Y^\dag)=h(\lambda_1,n_1)
\end{equation*}
with
$$
Y^\dag=\Sigma(X^\dag,N;G,\beta)
$$
and
$$
X^\dag=g(N),
$$
and where $g$ is correspondingly given by replacing all the optimum values for the respective constants. Also,
$$
 C(N;G,\beta)= h(\lambda_1,n_1)-\ln2\vartheta,
$$
being as before the previous channel capacity not achievable.

Of course, the previous maximization
$$
\max\{h(\xi_1,\xi_2):(\xi_1,\xi_2)\in\mathcal{D}\}
$$
is algebraically complicated enough so as to not allow for a final closed-form expression. However, it can be easily implemented numerically for given $\beta$ and $\vartheta$ (satisfying of course $0<\beta<(1-2e^{-1})\vartheta$). As an example, we numerically solved the previous optimization for the case with
$$
\vartheta=3\;\;\;\text{and}\;\;\;\beta=0.3.
$$
After appropriate numerical testing, it was found that
$$
\lambda_1\in[48,48.5]\;\;\;\text{and}\;\;\;n_1\in[-0.985,-0.98].
$$
For steps sizes of $10^{-6}$ for $\lambda_1$ and $10^{-7}$ for $n_1$ in the corresponding grids partitioning the intervals above, the optimal values found for $\lambda_1$ and $n_1$ were
$$
\lambda_1=48.1888\;\;\;\text{and}\;\;\;n_1=-0.9825,
$$
giving a maximum entropy
$$
h(\lambda_1,n_1)=10.0745\;(\nus).
$$
Regarding the general expression for the optimum curve $g$, its main points were found to be given by
$$
g(-\vartheta)=1.9223,
$$
to then decrease towards zero in $(-\vartheta,n_1)=(-3,-0.9825)$, staying at zero in $[-0.9825,n_2]$ with
$$
n_2=2.9958,
$$
and to then increase towards infinity in $(n_2,\vartheta)=(2.9958,3)$.
Though $n_2=2.9958\simeq 3=\vartheta$, the corresponding contribution to the entropy at the output in the interval $(n_2,\vartheta)$ was found to be
$$
-\int_{(n_2,\vartheta)}p_Y(y)\ln p_Y(y)\ell(dy)=0.0020(>0),
$$
i.e., as discussed next, several orders of magnitude greater than the numerical precisions involved (and in accordance with the natural boundary condition at $\vartheta-$, which precludes the case $n_2=\vartheta$). As a numerical checking on the correctness of the values found, the continuity requirements were met with the following precisions. At $n_1=-0.9825$,
$$
g(n_1)=-1.1102\times 10^{-16}\;\;\text{and}\;\;g'(n_1)=-6.6613\times 10^{-16},
$$
and, at $n_2=2.9958$,
$$
g(n_2)=0\;\;\text{and}\;\;g'(n_2)=3.7081\times 10^{-14},
$$
that is, with all of them being $\simeq 0$ at least to the order of $10^{-13}$.

Finally, we have the following channel capacity
\begin{eqnarray*}
 C(N;G,\beta)&=& h(\lambda_1,n_1)-\ln2\vartheta\\
&=&10.0745-\ln 6\\
&=&8.2827\;(\nus).
\end{eqnarray*}

\section{Attainability}
\label{existence}

In this section of the paper we present general conditions for the optimum
\begin{equation*}
\sup\{h(Y):Y\in\Sigma(N;G,\beta)\}=\max\{h(Y):Y\in\Sigma(N;G,\beta)\}=h(Y^\dag)
\end{equation*}
to exist, being then attained, as established in Section \ref{main}, at $Y^\dag$ with
$$
Y^\dag=\Sigma(X^\dag,N;G,\beta)
$$
and
$$
X^\dag=g(N),
$$
and with function $g$ satisfying the Euler-Lagrange equation (\ref{ELCFI}), along with the boundary conditions in (\ref{BCs}).

Conditions for channel capacity to be achievable by means of a capacity-achieving input distribution will also be provided, along with an interpretation of theoretical value via an approximation scheme when such attainability is not a priori guaranteed.

The section is a bit technical, so that instead of as in Section \ref{main} where we provided intuition for the proofs to summarize afterwards the corresponding result, we will now adopt the result-proof scheme, stating first the result to be followed by its respective proof.

Proofs rely on tools from functional analysis and Sobolev Space theory, being all of the cited theorems along the section available in any classical reference on such subjects, such as \cite{Ru,Sch,Al} and \cite{Ad}, to name a few.

We begin by stating two lemmas that will be of use in proving the main result of the section, Theorem \ref{thm3}. The first one is related to the approximation of the entropy at the output in (\ref{ODE2})
$$
h(Y)=h(N)+\int_{\nll}^{\nuu}p_N(n)\ln(1+g'(n))\ell(dn),
$$
when considering variations in the noise, and it will be of use in two cases: when $p_N$ is supported in an infinite Lebesgue measure interval, that is when $|\nll|+|\nuu|=\infty$, or, in the case of finite support, when $p_N$ corresponds to a uniform RV, that is, with $p_N$ constant over $(\nll,\nuu)$, $|\nll|+|\nuu|<\infty$. For that purpose, for a given $N\in\N(\nll,\nuu)$ we set
$$
\mathcal{H}_N[g]\doteq h(N)+\int_{\nll}^{\nuu}\ln(1+g')\Lambda_N(dn)
$$
over the domain $\mathcal{D}_{\mathcal{H}_N}$ consisting of the class of functions $g$ over $(\nll,\nuu)$ that are $\ell$-almost everywhere differentiable in $(\nll,\nuu)$ with $g'\geq-1$ at those points and giving finite entropy at the output, that is with
$$
\int_{\nll}^{\nuu}|\ln(1+g')|\Lambda_N(dn)<\infty.
$$
The second one is related to the Fourier transform quotient
$$
\X_\beta(f)=\frac{\F[\Lambda_{Y^\dag_{\beta}}]}{\F[\Lambda_N]}(f)
$$
with $f\in S_N$ and
$$
S_N=\{f\in\R:\F[\Lambda_N](f)\neq 0\},
$$
for a given $\beta\in \I_G$, of importance when characterizing the existence of a capacity-achieving input distribution as seen in Section \ref{main2}, and establishing its positive definiteness when continuity over $\R$ is already guaranteed by means of $S_N$ being the whole of $\R$, as well as recognizing general settings for which $S_N=\R$ is precisely the case.

\begin{lemma}\label{lemma1}
Let $N\in\N(\nll,\nuu)$ and cost function $G$. If $|\nll|+|\nuu|=\infty$ or if $p_N$ is constant in $(\nll,\nuu)$, then there exist a sequence of probability densities $\{p_{N_k}\}_{k\geq 0}$, defined accordingly over $(\nll,\nuu)$ but correspondingly supported over finite Lebesgue measure intervals $(\nll_k,\nuu_k)\subseteq(\nll,\nuu)$ ($\nll_k<\nuu_k$), and such that each of them can be taken to be nonconstant in $(\nll_k,\nuu_k)$, with associated noise RVs $N_k\in\N(\nll_k,\nuu_k)$, and with $\{p_{N_k}\}_{k\geq 0}$ converging to $p_N$ in $L^1((\nll,\nuu),\B(\nll,\nuu),\ell)$ such that, for each $g\in\mathcal{D}_{\mathcal{H}_N}$, we have
$$
g\in\bigcap_{k\geq 0}\mathcal{D}_{\mathcal{H}_{N_k}}
$$
and
$$
\mathcal{H}_{N_k}[g]\rightarrow\mathcal{H}_N[g]
$$
as $k\rightarrow\infty$. Moreover, if the corresponding optimums exist
\begin{equation*}
\sup\{h(Y):Y\in\Sigma(N;G,\beta)\}=\max\{h(Y):Y\in\Sigma(N;G,\beta)\}=h(Y_\beta^\dag)
\end{equation*}
and
\begin{equation*}
\sup\{h(Y):Y\in\Sigma(N_k;G,\beta)\}=\max\{h(Y):Y\in\Sigma(N_k;G,\beta)\}=h(Y_{k,\beta}^\dag)
\end{equation*}
for a given $\beta\in \I_G$ and all $k\geq 0$, then we have
$$
h(Y_{k,\beta}^\dag)\rightarrow h(Y_\beta^\dag)
$$
with
$$
Y_{k,\beta}^\dag\arrowd Y_\beta^\dag
$$
as $k\rightarrow\infty$. In addition, if the previous optimums exist not only for a given $\beta\in \I_G$ but for $\beta$ ranging in some compact interval $I\subseteq \I_G$, then the family
$$
\{h(Y_{k,\beta}^\dag)\}_{k\geq 0},\;\beta\in I,
$$
converges uniformly to $h(Y_\beta^\dag)$ in $I$ as $k\rightarrow\infty$. Finally, assume for a given $\beta\in \I_G$ there is a sequence of RVs $\{X_{N_k}\}_{k\geq 0}$ with each $X_{N_k}$ independent of $N_k$ and, setting
$$
Y_{N_k}\doteq X_{N_k}+N_k,
$$
with $Y_{N_k}\in\Sigma_\bot(N_k;G,\beta)$. Consider the sequence of RVs $\{X_N^k\}_{k\geq 0}$ with each $X_N^k$ independent of $N$ and equally distributed as $X_{N_k}$, that is, with
$$
X_{N}^k\eqd X_{N_k}.
$$
Then, setting
$$
Y_{N}^k\doteq X_{N}^k+N,
$$
each $Y_{N}^k\in\Sigma_\bot(N;G,\beta)$ and, denoting as $p_{Y_{N_k}}$ and $p_{Y_N^k}$ the densities of $Y_{N_k}$ and $Y_{N}^k$, respectively, which for simplicity are assumed to be defined in all $\R$ (that is, being set to $0$ where necessary), we have the following convergence as $k\rightarrow\infty$
$$
\int_\R|p_{Y_{N_k}}(y)-p_{Y_N^k}(y)|\ell(dy)\rightarrow 0
$$
and
$$
|h(Y_{N_k})-h(Y_N^k)|\rightarrow 0.
$$
\end{lemma}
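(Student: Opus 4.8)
The plan is to exploit that $Y_N^k$ and $Y_{N_k}$ are sums of independent summands sharing the common input law $\Lambda_{X_N^k}=\Lambda_{X_{N_k}}$, so that their densities are convolutions of that single law against $p_N$ and $p_{N_k}$, respectively. For the $L^1$ statement this collapses the whole comparison onto the noise--density distance $\int_\R|p_{N_k}-p_N|\,\ell(dz)$, already made to vanish by the first part of the lemma. The entropy statement is genuinely more delicate, since $L^1$ convergence of densities does not by itself force convergence of differential entropy; the bulk of the work is to upgrade the density convergence to convergence of the entropy integrals by a uniform integrability argument.

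First I would record membership and the convolution form. Since $X_N^k\eqd X_{N_k}$ and $Y_{N_k}\in\Sigma_\bot(N_k;G,\beta)$ forces $\E G(X_{N_k})\le\beta$, also $\E G(X_N^k)\le\beta$; as $N$ is a.c.\ and $X_N^k$ is independent of $N$, the computation recalled in Section \ref{setting} gives $Y_N^k$ a.c.\ with $p_{Y_N^k}=\Lambda_{X_N^k}*p_N$, and likewise $p_{Y_{N_k}}=\Lambda_{X_{N_k}}*p_{N_k}$. Using $\Lambda_{X_N^k}=\Lambda_{X_{N_k}}$,
\[
p_{Y_{N_k}}(y)-p_{Y_N^k}(y)=\int_\R\big(p_{N_k}(y-x)-p_N(y-x)\big)\,\Lambda_{X_{N_k}}(dx),
\]
so by the triangle inequality under the integral sign, Tonelli's theorem, and translation invariance of $\ell$,
\[
\int_\R\big|p_{Y_{N_k}}(y)-p_{Y_N^k}(y)\big|\,\ell(dy)\le\int_\R\Big(\int_\R|p_{N_k}(z)-p_N(z)|\,\ell(dz)\Big)\Lambda_{X_{N_k}}(dx)=\int_\R|p_{N_k}(z)-p_N(z)|\,\ell(dz).
\]
The right member tends to $0$ by the first part of the lemma, proving the first convergence (and, incidentally, uniformly in the common input law).

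For the entropies I would write $h(Y_{N_k})-h(Y_N^k)=\int_\R\big(\phi(p_{Y_N^k})-\phi(p_{Y_{N_k}})\big)\,\ell(dy)$ with $\phi(t)=t\ln t$, and use $|\phi(a)-\phi(b)|\le|a-b|\,(1+|\ln a|+|\ln b|)$, which follows from the mean value theorem applied to $\phi$. Splitting $\R$ into the region where both densities lie in a fixed compact subinterval of $(0,\infty)$, the region where one of them exceeds a large threshold $M$, and the tail region where one of them falls below a small threshold $\epsilon$, the middle region is handled directly by the $L^1$ convergence just established, since there $1+|\ln a|+|\ln b|$ is bounded and $\phi$ is Lipschitz.

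The remaining two regions are the crux and constitute the main obstacle. For the large--value region I would invoke $\|\Lambda_X*q\|_\infty\le\|q\|_\infty$, valid for any probability law $\Lambda_X$ and $q\in L^\infty$, which in the two situations covered by the lemma (uniform noise, and the bounded densities $p_{N_k}$ produced in its first part) yields a common bound $M$ with $\|p_{Y_{N_k}}\|_\infty,\|p_{Y_N^k}\|_\infty\le M$, reducing that region again to the $L^1$ estimate. For the tail region I would use $-t\ln t\le C_\delta\,t^{1-\delta}$ for small $t$ together with a uniform moment bound: the cost constraint $\E G(X_N^k)\le\beta$ with $G$ convex and non-affine (hence growing at least linearly on the unbounded side), together with $\E|N|<\infty$, gives a uniform first--moment bound on the $Y_N^k$ and $Y_{N_k}$, and a H\"older estimate then renders the tail contribution uniformly small. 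Combining the three regions yields $|h(Y_{N_k})-h(Y_N^k)|\to0$ and, in passing, the finiteness of the entropies needed to conclude $Y_N^k\in\Sigma_\bot(N;G,\beta)$. The crux is exactly this uniform integrability of $\{p\ln p\}$: the sup--norm bound from the convolution structure controls the large values while the cost/moment data control the tails, and it is precisely the restriction to the infinite--support or uniform--noise cases that makes both uniform bounds available.
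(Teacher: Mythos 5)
Your proposal addresses only the last assertion of the lemma. The construction of the truncated densities $\{p_{N_k}\}_{k\geq 0}$, the convergence $\mathcal{H}_{N_k}[g]\rightarrow\mathcal{H}_N[g]$ for $g\in\mathcal{D}_{\mathcal{H}_N}$, the convergence $h(Y_{k,\beta}^\dag)\rightarrow h(Y_\beta^\dag)$ of the optima and its uniform-in-$\beta$ refinement are all left unproved, and your treatment of the final part borrows the input $\int_{\nll}^{\nuu}|p_{N_k}-p_N|\,\ell(dn)\rightarrow 0$ from that unproved first part. For the piece you do treat, the $L^1$ estimate via the convolution representation, the triangle inequality under the integral sign and Tonelli is correct, and is in fact tighter than the paper's own route (which passes through pointwise convergence of the Fourier transforms).

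The genuine gap is in the entropy step. Your three-region argument needs two uniform bounds that the lemma's hypotheses do not supply. First, the sup-norm bound: $\|\Lambda_X*q\|_\infty\leq\|q\|_\infty$ is of no use unless $\|p_N\|_\infty<\infty$, and $p_N$ is only assumed positive and $\C^1$ on the open interval $(\nll,\nuu)$ with finite entropy integral --- it may be unbounded (e.g.\ $p_N(n)\propto n^{-1/2}e^{-n}$ on $(0,\infty)$ satisfies every hypothesis, including (\ref{fe})), in which case $p_{Y_N^k}=\Lambda_{X_N^k}*p_N$ can be unbounded for every $k$ (take $X_N^k$ degenerate) and no common $M$ exists; note also that $p_{Y_N^k}$ involves $p_N$ itself, not the truncations $p_{N_k}$, so boundedness of each $p_{N_k}$ is beside the point. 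Second, the moment bound: the lemma assumes only that $G$ is convex with $G'\neq 0$ $\ell$-a.e.; the quadratic lower bound on $G$ appears only later, as a hypothesis of Theorem \ref{thm3}. A cost function such as $G(x)=e^x$ gives no control of the left tail of $X_{N_k}$, so $\E G(X_{N_k})\leq\beta$ does not yield a first-moment bound uniform in $k$, and your H\"older estimate for the small-density region does not close. The paper avoids both issues by exploiting the exact identity $p_{N_k}=D_k\,p_N\mathbbm{1}_{[\nll_k,\nuu_k]}$ with $D_k\rightarrow 1$: after substituting this into the convolution, the two entropy integrals differ only through the range of the inner $\xi$-integral, which is monotone in $k$, so the comparison is carried out by monotone and dominated convergence on the sets $U(k)$ and $L(j,k)$ with no uniform sup-norm or moment input. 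To salvage your scheme you would have to either establish the two uniform bounds under the actual hypotheses (which is not possible) or reorganize the large-value and tail estimates around the specific truncation structure of $p_{N_k}$, as the paper does.
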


\begin{proof}
Consider first $|\nll|+|\nuu|=\infty$. We will provide the proof for the case $\nll=-\infty$ and $\nuu=\infty$, the others following mutatis-mutandis. Take sequences $\{\nll_k\}_{k\geq 0}$ decreasing monotonically to $-\infty$ and $\{\nuu_k\}_{k\geq 0}$ increasing monotonically to $\infty$, with $\nll_0<\nuu_0$, and set for $k\geq 0$
$$
p_{N_k}(n)\doteq\frac{p_N(n)}{\int_{\nll_k}^{\nuu_k}p_N(\xi)d\xi}
$$
for $n\in[\nll_k,\nuu_k]$, $p_{N_k}(n)=0$ for $n\in(-\infty,\nll_k)\cup (\nuu_k,\infty)$. That each associated $N_k\in\N(\nll_k,\nuu_k)$ is trivial to verify, as well as the convergence of $\{p_{N_k}\}_{k\geq 0}$ to $p_N$ in $L^1((\nll,\nuu),\B(\nll,\nuu),\ell)$,
$$
\int_{\nll}^{\nuu}|p_N(n)-p_{N_k}(n)|\ell(dn)\rightarrow 0
$$
as $k\rightarrow\infty$. Moreover, the convergence $p_N(n)\rightarrow 0$ as $n\rightarrow\pm\infty$ guarantees that $a_0$ and $b_0$ can be taken to make each $p_{N_k}$ nonconstant in $(\nll_k,\nuu_k)$. Now, let $g\in\mathcal{D}_{\mathcal{H}_N}$ and note that
\begin{eqnarray*}
\int_{\nll}^{\nuu}|\ln(1+g')|\Lambda_{N_k}(dn)
&=&\left[\int_{\nll_k}^{\nuu_k}p_N(n)\ell(dn)\right]^{-1}\int_{\nll_k}^{\nuu_k}|\ln(1+g')|\Lambda_{N}(dn)\\&\leq&
\left[\int_{\nll_k}^{\nuu_k}p_N(n)\ell(dn)\right]^{-1}\int_{\nll}^{\nuu}|\ln(1+g')|\Lambda_{N}(dn)<\infty
\end{eqnarray*}
since $g\in\mathcal{D}_{\mathcal{H}_N}$, and therefore
$$
g\in\bigcap_{k\geq 0}\mathcal{D}_{\mathcal{H}_{N_k}}.
$$
Then
\begin{equation*}
|\mathcal{H}_N[g]-\mathcal{H}_{N_k}[g]|\leq|h(N)+h(N_k)|+\int_{\nll}^{\nuu}|\ln(1+g'(n))|\cdotp|p_N(n)-p_{N_k}(n)|\ell(dn).
\end{equation*}
It is straightforward to check that
$$
|h(N)+h(N_k)|\rightarrow 0
$$
as $k\rightarrow\infty$. As for the remaining term,
\begin{multline*}
\int_{\nll}^{\nuu}|\ln(1+g'(n))|\cdotp|p_N(n)-p_{N_k}(n)|\ell(dn)\\=\int_{-\infty}^{\nll_k}|\ln(1+g')|\Lambda_N(dn)+\int_{\nuu_k}^{\infty}|\ln(1+g')|\Lambda_N(dn)
+\left|1-\frac{1}{\int_{\nll_k}^{\nuu_k}p_N(\xi)\ell(d\xi)}\right|\int_{\nll_k}^{\nuu_k}|\ln(1+g')|\Lambda_N(dn)
\end{multline*}
whose convergence to $0$ as $k\rightarrow\infty$ follows from the fact that $g\in\mathcal{D}_{\mathcal{H}_N}$ and therefore
$$
\int_{\nll}^{\nuu}|\ln(1+g')|\Lambda_N(dn)<\infty.
$$

Having established the convergence $\mathcal{H}_{N_k}[g]\rightarrow\mathcal{H}_N[g]$ as $k\rightarrow\infty$ for all $g\in\mathcal{D}_{\mathcal{H}_N}$, the assertion of the lemma in that, when the optimums exist,
$$
h(Y_{k,\beta}^\dag)\rightarrow h(Y_\beta^\dag)
$$
with
$$
Y_{k,\beta}^\dag\arrowd Y_\beta^\dag
$$
as $k\rightarrow\infty$, follows then from the concavity of the functionals $\mathcal{H}_{N}$ and $\mathcal{H}_{N_k}$, $k\geq 0$. Moreover, when the optimums exist for $\beta\in I$ with $I\subseteq \I_G$ a compact interval, and since then $h(Y_\beta^\dag)$ is continuous in $I$ and each $h(Y_{k,\beta}^\dag)$ is monotone in $I$ (in fact they all, the limit as well as the sequence, are strictly increasing and differentiable in $I$), the previous pointwise convergence as $k\rightarrow\infty$
$$
h(Y_{k,\beta}^\dag)\rightarrow h(Y_\beta^\dag),
$$
$\beta\in I$, is in fact uniform in $I$ (see for example (\cite{HGE1949})).

Consider now $\beta\in \I_G$ and the sequences $\{X_{N_k}\}_{k\geq 0}$, $\{Y_{N_k}\}_{k\geq 0}$, $\{X_N^k\}_{k\geq 0}$ and $\{Y_N^k\}_{k\geq 0}$, as stated in the lemma. Then, since for each $k\geq 0$, $X_{N_k}$ independent of $N_k$ and $X_N^k$ independent of $N$,
\begin{eqnarray*}
|\F[\Lambda_{Y_{N_k}}](f)-\F[\Lambda_{Y_N^k}](f)|
&=&|\F[\Lambda_{X_{N_k}}](f)\F[\Lambda_{N_k}](f)-\F[\Lambda_{X_N^k}](f)\F[\Lambda_{N}](f)|
\\&=&|\F[\Lambda_{X_{N_k}}](f)|\cdot|\F[\Lambda_{N_k}](f)-\F[\Lambda_{N}](f)|,
\end{eqnarray*}
where for the last equality we have used that $X_{N}^k\eqd X_{N_k}$. Then,
\begin{equation*}
|\F[\Lambda_{Y_{N_k}}](f)-\F[\Lambda_{Y_N^k}](f)|
\leq|\F[\Lambda_{N_k}](f)-\F[\Lambda_{N}](f)|
\leq\int_{\nll}^\nuu|p_{N_k}(n)-p_N(n)|\ell(dn)\rightarrow 0
\end{equation*}
as $k\rightarrow\infty$ for each $f\in\R$, from where the convergence
$$
\int_\R|p_{Y_{N_k}}(y)-p_{Y_N^k}(y)|\ell(dy)\rightarrow 0
$$
as $k\rightarrow\infty$ now follows. For simplicity we will proceed with the proof assuming the RVs $\{X_{N_k}\}_{k\geq 0}$ and $\{X_N^k\}_{k\geq 0}$ are absolutely continuous w.r.t. Lebesgue measure in $\R$, that is, assuming the corresponding densities $\{p_{X_{N_k}}\}_{k\geq 0}$ and $\{p_{X_N^k}\}_{k\geq 0}$ exist. (Otherwise the same proof that follows can be given in terms of the respective laws, but with a more tedious notation.) Then, from independence, for each $k\geq 0$ we may write the corresponding convolution integrals
$$
p_{Y_N^k}(y)=\int_\nll^\nuu p_{X_N^k}(y-\xi)p_N(\xi)\ell(d\xi)
$$
and
$$
p_{Y_{N_k}}(y)=\int_{\nll_k}^{\nuu_k} p_{X_{N_k}}(y-\xi)p_{N_k}(\xi)\ell(d\xi)
$$
with $y\in\R$. Setting for $k\geq 0$
$$
D_k\doteq\left[\int_{\nll_k}^{\nuu_k}p_N(\xi)\ell(d\xi)\right]^{-1},
$$
and since $X_{N}^k\eqd X_{N_k}$, we may write for $y\in\R$
$$
p_{Y_{N_k}}(y)=D_k\int_{\nll_k}^{\nuu_k} p_{X_{N}^k}(y-\xi)p_{N}(\xi)\ell(d\xi),
$$
and therefore
\begin{equation*}
\int_\R p_{Y_{N_k}}(y)\ln p_{Y_{N_k}}(y)\ell(dy)=
D_k\ln D_k\int_\R\int_{\nll_k}^{\nuu_k}p_{X_{N}^k}(y-\xi)p_{N}(\xi)\ell(d\xi)\ell(dy)
+D_k\int_\R\phi_k(\nll_k,\nuu_k,y)\ell(dy),
\end{equation*}
where we have set for $k\geq 0$, $\nll\leq u\leq v\leq\nuu$ and $y\in\R$,
\begin{equation*}
\phi_k(u,v,y)\doteq \int_{u}^v p_{X_{N}^k}(y-\xi)p_{N}(\xi)
\ln\left[p_{X_{N}^k}(y-\xi)p_{N}(\xi)\right]\ell(d\xi).
\end{equation*}
Then
\begin{multline*}
\left|\int_\R p_{Y_{N_k}}(y)\ln p_{Y_{N_k}}(y)\ell(dy)-\int_R p_{Y_{N}^k}(y)\ln p_{Y_{N}^k}(y)\ell(dy)\right|\\
\leq\left|D_k\ln D_k\int_\R\int_{\nll_k}^{\nuu_k}p_{X_{N}^k}(y-\xi)p_{N}(\xi)\ell(d\xi)\ell(dy)\right|
+\left|D_k\int_\R\phi_k(\nll_k,\nuu_k,y)\ell(dy)-\int_\R\phi_k(\nll,\nuu,y)\ell(dy)\right|.
\end{multline*}
Since for all $k\geq 0$
$$
0\leq\int_\R\int_{\nll_k}^{\nuu_k}p_{X_{N}^k}(y-\xi)p_{N}(\xi)\ell(d\xi)\ell(dy)\leq 1
$$
and also
$$
\lim_{k\rightarrow\infty}D_k=1,
$$
we have
$$
\left|D_k\ln D_k\int_\R\int_{\nll_k}^{\nuu_k}p_{X_{N}^k}(y-\xi)p_{N}(\xi)\ell(d\xi)\ell(dy)\right|\rightarrow 0
$$
as $k\rightarrow\infty$. Now, for $j,k\in\mathbb{N}_0\doteq\{0,1,2,\ldots\}$ set
$$
L(j,k)\doteq\left\{y\in[-j,j]:\int_{\nll_k}^{\nuu_k}p_{X_{N}^k}(y-\xi)p_{N}(\xi)<1\right\}
$$
and
$$
U(k)\doteq\left\{y\in\R:\int_{\nll_k}^{\nuu_k}p_{X_{N}^k}(y-\xi)p_{N}(\xi)\geq 1\right\}.
$$
Then, for each $k\geq 0$,
\begin{equation*}
\int_\R\phi_k(\nll_k,\nuu_k,y)\ell(dy)=\int_{U(k)}\phi_k(\nll_k,\nuu_k,y)\ell(dy)
+\sum_{j=0}^{\infty}\int_{L(j,k)}\phi_k(\nll_k,\nuu_k,y)\ell(dy)
\end{equation*}
and
\begin{equation*}
\int_\R\phi_k(\nll,\nuu,y)\ell(dy)=\int_{U(k)}\phi_k(\nll,\nuu,y)\ell(dy)
+\sum_{j=0}^{\infty}\int_{L(j,k)}\phi_k(\nll,\nuu,y)\ell(dy).
\end{equation*}
Since $\phi_k(\nll_k,\nuu_k,y)\geq 0$ over $U(k)$ and $\phi_k(\nll_k,\nuu_k,y)$ increases to $\phi_k(\nll,\nuu,y)$ with $k$, Lebesgue's Monotone Convergence theorem shows that
$$
\left|\int_{U(k)}\phi_k(\nll_k,\nuu_k,y)\ell(dy)-\int_{U(k)}\phi_k(\nll,\nuu,y)\ell(dy)\right|\rightarrow 0
$$
as $k\rightarrow\infty$. Now, Lebesgue's Dominated Convergence theorem shows that, for each $j\geq 0$,
$$
\left|\int_{L(j,k)}\phi_k(\nll_k,\nuu_k,y)\ell(dy)-\int_{L(j,k)}\phi_k(\nll,\nuu,y)\ell(dy)\right|\rightarrow 0
$$
as $k\rightarrow\infty$ as well, and, since
$$
-\int_{L(j,k)}\phi_k(\nll_k,\nuu_k,y)\ell(dy)\geq 0
$$
for all $j,k\in\mathbb{N}_0$, we conclude once again from Lebesgue's Monotone Convergence theorem that
\begin{equation*}
\left|\sum_{j=0}^{\infty}\int_{L(j,k)}\phi_k(\nll_k,\nuu_k,y)\ell(dy)
-\sum_{j=0}^{\infty}\int_{L(j,k)}\phi_k(\nll,\nuu,y)\ell(dy)\right|\rightarrow 0
\end{equation*}
as $k\rightarrow\infty$. Thus, since $D_k\rightarrow 1$ as $k\rightarrow\infty$, we then conclude
$$
\left|D_k\int_\R\phi_k(\nll_k,\nuu_k,y)\ell(dy)-\int_\R\phi_k(\nll,\nuu,y)\ell(dy)\right|\rightarrow 0
$$
as $k\rightarrow\infty$, and therefore
\begin{equation*}
\left|\int_\R p_{Y_{N_k}}(y)\ln p_{Y_{N_k}}(y)\ell(dy)-\int_\R p_{Y_{N}^k}(y)\ln p_{Y_{N}^k}(y)\ell(dy)\right|\rightarrow 0
\end{equation*}
as $k\rightarrow\infty$ as well. Since $Y_{N_k}\in\Sigma_\bot(N_k;G,\beta)$ for all $k\geq 0$, and therefore
$$
\int_\R p_{Y_{N_k}}(y)|\ln p_{Y_{N_k}}(y)|\ell(dy)<\infty,
$$
we conclude $Y_{N}^k\in\Sigma_\bot(N;G,\beta)$ for all $k\geq 0$ as well (redefining if necessary the sequence to start, say, at $k_0$), and hence
$$
|h(Y_{N_k})-h(Y_N^k)|\rightarrow 0
$$
as $k\rightarrow\infty$. The proof is thus concluded for the case $\nll=-\infty$ and $\nuu=\infty$.

Considering now $N$ a uniform RV in $(\nll,\nuu)$, and setting
$$
p_{N_k}(n)\doteq\theta_k\left[n-\frac{\nll+\nuu}{2}\right]+\frac{1}{\nuu-\nll}
$$
for $n\in(\nll,\nuu)$ with $\{\theta_k\}_{k\geq0}$ any sequence converging monotonically to $0$ (taking of course $\nll_k=\nll$ and $\nuu_k=\nuu$, $k\geq 0$), we get the advertised convergence to $p_N$ in $L^1((\nll,\nuu),\B(\nll,\nuu),\ell)$ and the remaining of the proof follows by exactly the same arguments as before. We have thus proved the lemma.
\end{proof}

As mentioned before, the next lemma considers the Fourier transform quotient
$$
\X_\beta(f)=\frac{\F[\Lambda_{Y^\dag_{\beta}}]}{\F[\Lambda_N]}(f)
$$
with $f\in S_N$ and
$$
S_N=\{f\in\R:\F[\Lambda_N](f)\neq 0\},
$$
for a given $\beta\in \I_G$, and states that when $S_N=\R$, giving then the continuity of $\X_\beta$ in the whole of $\R$ from the individual continuities of numerator and denominator, we obtain a positive definite function. In addition, it recognizes general settings for which $S_N=\R$ is precisely the case.

\begin{lemma}\label{lemma2}
Let $N\in\N(\nll,\nuu)$ and cost function $G$. Assume that $S_N=\R$ and that the optimum exists
\begin{equation*}
\sup\{h(Y):Y\in\Sigma(N;G,\beta)\}=\max\{h(Y):Y\in\Sigma(N;G,\beta)\}=h(Y_\beta^\dag)
\end{equation*}
for a given $\beta\in \I_G$. Then, the continuous function
$$
\X_\beta(f)=\frac{\F[\Lambda_{Y^\dag_{\beta}}]}{\F[\Lambda_N]}(f),\;\;\;f\in\R,
$$
is positive definite. Moreover, we in fact have $S_N=\R$ either when $p_N$ is nonconstant and of compact support ($|\nll|+|\nuu|<\infty$), when $p_N$ is non-increasing and of semi-infinite support ($|\nll|<\infty$, $\nuu=\infty$), or when $p_N$ is non-decreasing and of semi-infinite support as well ($\nll=-\infty$, $|\nuu|<\infty$).
\end{lemma}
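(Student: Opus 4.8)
The plan is to treat the two assertions separately: first the claim that $S_N=\R$ in the three listed cases, and then, granting $S_N=\R$, the positive definiteness of $\X_\beta$.

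For the statements guaranteeing $S_N=\R$ I would argue by an integration-by-parts identity that exposes a sign. Consider the non-increasing semi-infinite case ($|\nll|<\infty$, $\nuu=\infty$); after a harmless shift (which only multiplies $\F[\Lambda_N]$ by a unimodular factor and so does not alter its zero set) assume $\nll=0$. Since $p_N\in\C^1$, $p_N\geq 0$ is non-increasing and $p_N(n)\to 0$, I write $p_N(x)=\int_x^\infty(-p_N'(t))\,\ell(dt)$ with $-p_N'\geq 0$ continuous, and interchanging the order of integration gives
$$
j2\pi f\,\F[\Lambda_N](f)=\int_0^\infty(-p_N'(t))\left(1-e^{-j2\pi f t}\right)\ell(dt).
$$
Taking real parts, $\int_0^\infty(-p_N'(t))(1-\cos 2\pi f t)\,\ell(dt)$ is the integral of a nonnegative continuous integrand, hence vanishes for some $f\neq 0$ only if the integrand is identically zero; at the dense set of $t$ where $1-\cos 2\pi ft>0$ this forces $-p_N'(t)=0$, so by continuity $p_N'\equiv 0$, which is impossible for a density on $[0,\infty)$. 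Thus $\F[\Lambda_N](f)\neq 0$ for $f\neq 0$, and $\F[\Lambda_N](0)=1$, so $S_N=\R$. The non-decreasing case follows by the reflection $n\mapsto -n$.

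The compact support case I would attack by the same device, now writing $p_N(x)=p_N(\nuu)+\int_x^\nuu(-p_N'(t))\,\ell(dt)$; the boundary value only contributes an extra nonnegative term $p_N(\nuu)(1-\cos 2\pi f\nuu)$ to the real part, so the same positivity forces $p_N'\equiv 0$ and contradicts nonconstancy. The main obstacle here is that this argument genuinely needs $p_N$ to be monotone: for a merely nonconstant $\C^1$ density of compact support the real part carries no definite sign (indeed a symmetric smooth bump such as $p_N(x)\propto 1+\cos\pi x$ on $[-1,1]$ has $\F[\Lambda_N](1)=0$), so the proof must exploit the monotone structure of the densities actually produced in Lemma \ref{lemma1} rather than nonconstancy alone.

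For the positive definiteness of $\X_\beta$ under $S_N=\R$, soft Fourier-analytic manipulation will not suffice, since the ratio of two characteristic functions need not be positive definite; the conclusion must be extracted from the optimality of $Y_\beta^\dag$. The plan is to show $Y_\beta^\dag$ is itself reproducible by an input independent of $N$. Fix the cost level $\alpha\in\I_G$ at which the independent optimum value equals $h(Y_\beta^\dag)$ (the $\alpha^\ddag$-matching of Section \ref{main2}) and take a maximizing sequence $\{X_m\}$, each independent of $N$, with $\E G(X_m)\leq\alpha$ and $h(X_m+N)\to h(Y_\beta^\dag)$. Using convexity and coercivity of the nonlinear cost $G$ (linear $G$ is excluded, as it cannot deliver a finite optimum by Remark \ref{linear}) the bound $\E G(X_m)\leq\alpha$ yields tightness, so along a subsequence $X_m\arrowd X^{ind}$ and hence $X_m+N\arrowd X^{ind}+N$; upper semicontinuity of $h(\cdot+N)$ under this convergence gives $h(X^{ind}+N)\geq h(Y_\beta^\dag)$, so $X^{ind}+N$ attains the optimum. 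Since the optimizing output is unique in law (Theorem \ref{thm1}), $X^{ind}+N\eqd Y_\beta^\dag$, whence $\Lambda_{Y_\beta^\dag}=\Lambda_{X^{ind}}*\Lambda_N$ and, dividing Fourier transforms on $S_N=\R$, $\X_\beta=\F[\Lambda_{X^{ind}}]$, a genuine characteristic function and therefore positive definite.

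The step I expect to be the crux is the existence of this independent maximizer, that is the pair (tightness, upper semicontinuity). Tightness must be argued with care because a convex nonlinear $G$ need only be coercive in one direction (e.g. $G(x)=e^x$), so control of the opposite tail has to come from the amplitude behavior of the problem or from the entropy being driven to $-\infty$ otherwise; and upper semicontinuity of differential entropy under weak convergence is delicate in general, here to be recovered from the smoothing effect of convolving with the fixed density $p_N$ together with the uniform integrability furnished by the cost bound. Once these two analytic points are secured, the reduction to uniqueness of the dependent optimum closes the argument.
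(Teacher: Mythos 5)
Your treatment of the two semi-infinite monotone cases is sound and in fact more self-contained than the paper's: the paper disposes of all three ``$S_N=\R$'' cases by citing external theorems on the zeros of sine and cosine transforms of structured densities, whereas your integration-by-parts identity and the strict positivity of $\int_0^\infty(-p_N'(t))(1-\cos 2\pi ft)\,\ell(dt)$ reach the same conclusion from first principles. Your raised-cosine observation is also correct and important: for $p_N(x)=\tfrac12(1+\cos\pi x)$ on $[-1,1]$ one computes $\F[\Lambda_N](f)=\sin(2\pi f)/\bigl(2\pi f(1-4f^2)\bigr)$, which vanishes at $f=1$, so nonconstancy plus compact support does \emph{not} imply $S_N=\R$. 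The paper's proof of that case rests on the cited transform theorems ``by linearly transforming $N$ if necessary,'' which cannot bridge this gap; the compact-support assertion genuinely needs the monotone (or otherwise structured) densities actually produced in Lemma \ref{lemma1}, exactly as you say. You were right to decline to prove it in the stated generality.

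The genuine gap is in the first assertion, where your route diverges completely from the paper's. The paper argues directly on the quadratic form $\sum_{l,m}\X_\beta(\xi_l-\xi_m)z_lz_m^*$: it pairs the $(l,m)$ and $(m,l)$ terms, uses $|\F[\Lambda_N]|\leq 1$ to strip off the denominator $|\F[\Lambda_N]|^2$, and compares the result with the quadratic form of $\F[\Lambda_{Y^\dag_\beta}]\F^*[\Lambda_N]$, the characteristic function of $Y^\dag_\beta-N'$ with $N'$ an independent copy of $N$, which is positive definite by Bochner. You instead propose to manufacture an independent-input maximizer $X^{ind}$ by the direct method and then read off $\X_\beta=\F[\Lambda_{X^{ind}}]$. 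That is a legitimate alternative strategy in principle, but the two load-bearing steps --- tightness of the maximizing sequence under a convex cost that may be coercive in only one direction, and upper semicontinuity of $h(\cdot+N)$ along $\arrowd$ --- are precisely where the difficulty lives, and you acknowledge rather than supply them. There is also a circularity hazard: the $\alpha^\ddag$-matching of Section \ref{main2} that you invoke to select the cost level $\alpha$ is itself constructed under the standing assumption that $X^{ind}_\alpha$ exists, i.e., under the conclusion you are trying to establish. Until the compactness/semicontinuity pair is proved independently of Section \ref{main2}, the positive-definiteness assertion remains open in your write-up.
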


\begin{proof}
Let $n\in\mathbb{N}$ and $f_1, f_2, ..., f_n\in\R$, $\xi_1, \xi_2, ..., \xi_n\in\mathbb{C}$. Then,
\begin{equation*}
\sum_{l=1}^n\sum_{m=1}^n\frac{\F[\Lambda_{Y^\dag_{\beta}}](\xi_l-\xi_m)}{\F[\Lambda_N](\xi_l-\xi_m)}\xi_l\xi^*_m
=\frac{\F[\Lambda_{Y^\dag_{\beta}}](\xi_l-\xi_m)}{\F[\Lambda_N](\xi_l-\xi_m)}\xi_l\xi^*_m
+\frac{\F[\Lambda_{Y^\dag_{\beta}}](\xi_m-\xi_l)}{\F[\Lambda_N](\xi_m-\xi_l)}\xi_m\xi^*_l+\dots,
\end{equation*}
where we have written together the terms for $(l,m)$ and $(m,l)$, and where the dots denote all the remaining terms in the sum not explicitly written. Since $Y^\dag_{\beta}$ and $N$ are real valued RVs, we have for $f\in\R$
$$
\F[\Lambda_{Y^\dag_{\beta}}](f)=\F^*[\Lambda_{Y^\dag_{\beta}}](-f),
$$
and similarly for $\F[\Lambda_N]$, with $\F^*$ denoting complex conjugate of the corresponding Fourier transform at $f\in\R$. Hence, we may write
\begin{eqnarray*}
\sum_{l=1}^n\sum_{m=1}^n\frac{\F[\Lambda_{Y^\dag_{\beta}}](\xi_l-\xi_m)}{\F[\Lambda_N](\xi_l-\xi_m)}\xi_l\xi^*_m
&=&\left[\frac{\F[\Lambda_{Y^\dag_{\beta}}](\xi_m-\xi_l)}{\F[\Lambda_N](\xi_m-\xi_l)}\xi_l^*\xi_m\right]^*
+\frac{\F[\Lambda_{Y^\dag_{\beta}}](\xi_m-\xi_l)}{\F[\Lambda_N](\xi_m-\xi_l)}\xi_m\xi^*_l+\dots\\
&\doteq&\frac{A^*_{m,l}}{\F^*[\Lambda_N](\xi_m-\xi_l)}+\frac{A_{m,l}}{\F[\Lambda_N](\xi_m-\xi_l)}+\dots\\
&=&\frac{A^*_{m,l}\F[\Lambda_N](\xi_m-\xi_l)+A_{m,l}\F^*[\Lambda_N](\xi_m-\xi_l)}{|\F[\Lambda_N](\xi_m-\xi_l)|^2}+\dots
\end{eqnarray*}
Now, both numerator and denominator of the previous expression are real numbers, and since by Bochner's theorem $\F[\Lambda_N](f)$ is positive definite and thus
$$
|\F[\Lambda_N](f)|\leq|\F[\Lambda_N](0)|=1
$$
for all $f\in\R$, we conclude
\begin{equation*}
\frac{A^*_{m,l}\F[\Lambda_N](\xi_m-\xi_l)+A_{m,l}\F^*[\Lambda_N](\xi_m-\xi_l)}{|\F[\Lambda_N](\xi_m-\xi_l)|^2}+\dots
\geq A^*_{m,l}\F[\Lambda_N](\xi_m-\xi_l)+A_{m,l}\F^*[\Lambda_N](\xi_m-\xi_l)+\dots
\end{equation*}
Thus, we have
\begin{equation*}
\sum_{l=1}^n\sum_{m=1}^n\frac{\F[\Lambda_{Y^\dag_{\beta}}](\xi_l-\xi_m)}{\F[\Lambda_N](\xi_l-\xi_m)}\xi_l\xi^*_m
\geq\sum_{l=1}^n\sum_{m=1}^n\F[\Lambda_{Y^\dag_{\beta}}](\xi_m-\xi_l)\F^*[\Lambda_N](\xi_m-\xi_l)\xi^*_l\xi_m
\doteq\sum_{l=1}^n\sum_{m=1}^n\F(\xi_m-\xi_l)\xi^*_l\xi_m,
\end{equation*}
that is, with $\F(f)\doteq\F[\Lambda_{Y^\dag_{\beta}}](f)\F^*[\Lambda_N](f)$ for all $f\in\R$. But, it is easy to see that $\F(f)$ so defined corresponds to the Fourier transform of the convolution function of the densities of the RVs $Y^\dag_{\beta}$ and $-N$, i.e., $\F$ can be looked at as the Fourier transform of a density w.r.t. Lebesgue measure for the RV $\hat{X}$, with
$$
\hat{X}\doteq\hat{Y}-N
$$
and with $\hat{Y}\eqd Y^\dag_{\beta}$ and $\hat{Y}$ and $N$ independent. Hence, by Bochner's theorem we conclude that $\F$ is positive definite and therefore
\begin{equation*}
\sum_{l=1}^n\sum_{m=1}^n\frac{\F[\Lambda_{Y^\dag_{\beta}}](\xi_l-\xi_m)}{\F[\Lambda_N](\xi_l-\xi_m)}\xi_l\xi^*_m
\geq\sum_{l=1}^n\sum_{m=1}^n\F(\xi_m-\xi_l)\xi^*_l\xi_m\geq0,
\end{equation*}
which gives the first part of the claimed result.

As for the second one, since
\begin{equation*}
\F[\Lambda_{N}](f)=\int_{\nll}^{\nuu}e^{-j2\pi f\xi}p_{N}(\xi)\ell(d\xi)
=\int_{\nll}^{\nuu}\cos(2\pi f\xi)p_{N}(\xi)\ell(d\xi)-j\int_{\nll}^{\nuu}\sin(2\pi f\xi)p_{N}(\xi)\ell(d\xi),
\end{equation*}
$f\in\R$, and since $\E|N|<\infty$ and $p_N\in\C^1(\nll,\nuu)$, in particular continuous, by \cite[Thms. 2.1.1 and 2.1.5, pp. 4264 and 4268, respt.]{AMS2005} we conclude, and by linearly transforming $N$ if necessary, that
$$
\int_{\nll}^{\nuu}\sin(2\pi f\xi)p_{N}(\xi)\ell(d\xi)\neq 0
$$
for all $f\in\R$, from where
$$
\F[\Lambda_N](f)\neq 0
$$
for all $f\in\R$ as well, in all the advertised cases, that is, either when $p_N$ is nonconstant and of compact support ($|\nll|+|\nuu|<\infty$), when $p_N$ is non-increasing and of semi-infinite support ($|\nll|<\infty$, $\nuu=\infty$), or when $p_N$ is non-decreasing and of semi-infinite support as well ($\nll=-\infty$, $|\nuu|<\infty$). We have thus proved the lemma.
\end{proof}

We are now in position to state and prove the main result of this section, Theorem \ref{thm3} below. As mentioned at the beginning of the section, it recognizes general conditions for the optimum to exist for all $\alpha\in \I_G$,
\begin{equation*}
\sup\{h(Y):Y\in\Sigma(N;G,\alpha)\}=\max\{h(Y):Y\in\Sigma(N;G,\alpha)\}=h(Y^{\dag}_{\alpha}),
\end{equation*}
where each $Y^{\dag}_{\alpha}$, as established in Section \ref{main}, is given then by
$$
Y^\dag_\alpha=\Sigma(X^\dag_\alpha,N;G,\alpha)
$$
with
$$
X^\dag_\alpha=g_\alpha(N),
$$
and with each function $g_\alpha$ satisfying the Euler-Lagrange equation (\ref{ELCFI}), along with the boundary conditions in (\ref{BCs}). The theorem also identify general conditions for the corresponding capacity to be achievable.

\begin{theorem}\label{thm3}
Let $N\in\N(\nll,\nuu)$ and cost function $G$. Assume there exist $p,q,r\in\R$, $p>0$, such that
$$
px^2+qx+r\leq G(x)
$$
for all $x\in\R$. Then, the optimum exists for all $\alpha\in \I_G$, that is, for each $\alpha\in \I_G$ there exists $Y_\alpha^\dag\in\Sigma(N;G,\alpha)$ such that
\begin{equation*}
\sup\{h(Y):Y\in\Sigma(N;G,\alpha)\}=\max\{h(Y):Y\in\Sigma(N;G,\alpha)\}=h(Y_\alpha^\dag),
\end{equation*}
and therefore we also have
$$
C(N;G,\alpha)<\infty.
$$
Moreover, either when $p_N$ is nonconstant and of compact support ($|\nll|+|\nuu|<\infty$), when $p_N$ is non-increasing and of semi-infinite support ($|\nll|<\infty$, $\nuu=\infty$), or when $p_N$ is non-decreasing and of semi-infinite support as well ($\nll=-\infty$, $|\nuu|<\infty$), we have that for each $\alpha\in \I_G$ there exists $Y^\ddag_\alpha\in\Sigma_{\bot}(N;G,\alpha)$ such that, with
$$
Y^\ddag_{\alpha}=\Sigma_{\bot}(X_\alpha^\ddag,N;G,\alpha),
$$
we have
\begin{equation*}
\sup\{h(Y):Y\in\Sigma_{\bot}(N;G,\alpha)\}=\max\{h(Y):Y\in\Sigma_{\bot}(N;G,\alpha)\}=h(Y^\ddag_{\alpha}),
\end{equation*}
that is, with the probability law of $X^\ddag_\alpha$ being a corresponding capacity-achieving input distribution, and therefore we may write
$$
C(N;G,\alpha)=h(Y^\ddag_{\alpha})-h(N).
$$
In addition, for each $\alpha\in \I_G$ there exists a unique $\alpha^\ddag(\alpha)\in \I_G$, $\alpha^\ddag(\alpha)<\alpha$, such that
$$
Y^\ddag_{\alpha}\eqd Y_{\alpha^\ddag(\alpha)}^\dag
$$
and therefore we may alternatively write
$$
C(N;G,\alpha)=h(Y_{\alpha^\ddag(\alpha)}^\dag)-h(N).
$$
Finally, for all cases not included above, when capacity achievability is not a priori guaranteed but depends on the existence of a positive definite continuous extension of the Fourier transform quotient
$$
\X_\alpha(f)=\frac{\F[\Lambda_{Y^\dag_{\alpha}}]}{\F[\Lambda_N]}(f),\;\alpha\in \I_G,
$$
from $S_N=\{f\in\R:\F[\Lambda_N](f)\neq 0\}$ to the whole of $\R$, cases that are subsumed when either $|\nll|+|\nuu|=\infty$ or $p_N$ constant in $(\nll,\nuu)$, we still have that there exists a sequence of noise RVs $\{N_k\}_{k\geq 0}$ with each $N_k\in\N(\nll_k,\nuu_k)$, $(\nll_k,\nuu_k)\subseteq(\nll,\nuu)$ ($\nll_k<\nuu_k$) and $|\nll_k|+|\nuu_k|<\infty$, and with densities $\{p_{N_k}\}_{k\geq 0}$ converging to $p_N$ in $L^1((\nll,\nuu),\B(\nll,\nuu),\ell)$ as $k\rightarrow\infty$ and such that, for each $\alpha\in \I_G$,
$$
C(N;G,\alpha)=\lim_{k\rightarrow\infty}C(N_k;G,\alpha)
$$
with each $C(N_k;G,\alpha)$ being attainable, that is, for each $\alpha\in \I_G$ there exists a sequence of RVs $\{Y_{k,\alpha}^\ddag\}_{k\geq 0}$, each one correspondingly belonging to $\Sigma_{\bot}(N_k;G,\alpha)$ and such that, with
$$
Y^\ddag_{k,\alpha}=\Sigma_{\bot}(X_{k,\alpha}^\ddag,N_k;G,\alpha),
$$
we have
\begin{equation*}
\sup\{h(Y):Y\in\Sigma_{\bot}(N_k;G,\alpha)\}=\max\{h(Y):Y\in\Sigma_{\bot}(N_k;G,\alpha)\}=h(Y^\ddag_{k,\alpha})
\end{equation*}
and therefore
$$
C(N_k;G,\alpha)=h(Y^\ddag_{k,\alpha})-h(N_k),
$$
that is, with the probability law of each $X_{k,\alpha}^\ddag$ being a corresponding capacity-achieving input distribution. As before, for each $k\geq 0$ and each $\alpha\in \I_G$ there exists a unique $\alpha^\ddag(k,\alpha)\in \I_G$, $\alpha^\ddag(k,\alpha)<\alpha$, such that
$$
Y^\ddag_{k,\alpha}\eqd Y_{k,\alpha^\ddag(k,\alpha)}^\dag,
$$
and therefore we may alternatively write
$$
C(N_k;G,\alpha)=h(Y_{k,\alpha^\ddag(k,\alpha)}^\dag)-h(N_k),
$$
where for each $k\geq 0$ and $\alpha\in \I_G$, $Y^\dag_{k,\alpha}\in\Sigma(N_k;G,\alpha)$ is the corresponding optimum
\begin{equation*}
\sup\{h(Y):Y\in\Sigma(N_k;G,\alpha)\}=\max\{h(Y):Y\in\Sigma(N_k;G,\alpha)\}=h(Y^\dag_{k,\alpha}).
\end{equation*}
In particular, we still have that for each $\alpha\in \I_G$ there exists a unique $\alpha^{\ddag}(\alpha)\in \I_G$, now with $\alpha^{\ddag}(\alpha)\leq\alpha$, and such that
$$
C(N;G,\alpha)=\lim_{k\rightarrow\infty}C(N_k;G,\alpha)=h(Y^\dag_{\alpha^\ddag(\alpha)})-h(N),
$$
even if the corresponding Fourier transform quotient
$$
\X_{\alpha^\ddag(\alpha)}(f)=\frac{\F[\Lambda_{Y^\dag_{\alpha^\ddag(\alpha)}}]}{\F[\Lambda_N]}(f)
$$
does not have a positive definite continuous extension from $S_N$ to the whole of $\R$, making then the corresponding capacity $C(N;G,\alpha)$ not achievable.
\end{theorem}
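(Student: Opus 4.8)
For the existence of the optimum in the dependent case together with the finiteness of $C(N;G,\alpha)$, I would run the \emph{direct method} of the calculus of variations on the concave functional $\mathcal{H}_N[g]=h(N)+\int_{\nll}^{\nuu}\ln(1+g')\,\Lambda_N(dn)$ of (\ref{ODE2}), maximized over the convex feasible set $\{g\in\mathcal{D}_{\mathcal{H}_N}:\E G(g(N))\le\alpha\}$. The whole role of the hypothesis $px^2+qx+r\le G(x)$ is to supply coercivity: combined with the constraint it gives $p\,\E[g(N)^2]+q\,\E[g(N)]+r\le\alpha$, and with $\E|g(N)|<\infty$ and Cauchy--Schwarz this bounds $\E[g(N)^2]$ uniformly over the feasible set by some finite $M$. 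Writing $X=g(N)$, this uniform second-moment control on $X$, through standard weak-compactness arguments in the weighted Sobolev space associated with $\Lambda_N$ and the upper semicontinuity of differential entropy along sequences with controlled moments, makes a maximizing sequence $\{g_k\}$ admit a weakly convergent subsequence whose limit $g_\alpha$ is a maximizer of finite value (weak closedness of the convex constraint set handling the cost constraint). Hence the supremum is attained, $Y_\alpha^\dag=\Sigma(X_\alpha^\dag,N;G,\alpha)$ with $X_\alpha^\dag=g_\alpha(N)$, and $C(N;G,\alpha)<\infty$ follows since the maximum is finite and $h(N)\in\R$.

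For the three cases in which capacity is achievable, I would simply combine Lemma \ref{lemma2} with Corollary \ref{cor1}. Lemma \ref{lemma2} guarantees that in each of these cases $S_N=\R$ and that the quotient $\X_\alpha(f)=\F[\Lambda_{Y^\dag_\alpha}](f)/\F[\Lambda_N](f)$ is a continuous, positive definite function on all of $\R$; by Bochner's theorem it is therefore the Fourier transform of a probability law. This is exactly the standing hypothesis of Corollary \ref{cor1} (whose first hypothesis, existence of $Y^\dag_\alpha$ for all $\alpha\in\I_G$, is furnished by the previous paragraph), and that corollary delivers, for each $\alpha\in\I_G$, the independent input $X_\alpha^\ddag\eqd X^{ind}_{\alpha^\ddag(\alpha)}$, the unique $\alpha^\ddag(\alpha)<\alpha$, the identity $Y_\alpha^\ddag\eqd Y^\dag_{\alpha^\ddag(\alpha)}$, the achievability $C(N;G,\alpha)=h(Y_\alpha^\ddag)-h(N)$, and the $\P$-almost sure uniqueness.

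For the remaining cases ($|\nll|+|\nuu|=\infty$ or $p_N$ constant), I would invoke Lemma \ref{lemma1} to produce the approximating sequence $\{N_k\}$ of nonconstant, compactly supported noises with $p_{N_k}\to p_N$ in $L^1((\nll,\nuu),\B(\nll,\nuu),\ell)$. Each $N_k$ falls under the compact-support nonconstant case, so the previous two paragraphs apply verbatim to $N_k$ (the quadratic lower bound being a blanket hypothesis), yielding the optimum $Y^\dag_{k,\alpha}$ and the achievable $C(N_k;G,\alpha)=h(Y^\ddag_{k,\alpha})-h(N_k)=h(Y^\dag_{k,\alpha^\ddag(k,\alpha)})-h(N_k)$ with its own map $\alpha^\ddag(k,\cdot)$. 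Lemma \ref{lemma1} then supplies $h(N_k)\to h(N)$, the pointwise and (on compact subintervals of $\I_G$) uniform convergence $h(Y^\dag_{k,\beta})\to h(Y^\dag_\beta)$, and, through its final displays $\int_\R|p_{Y_{N_k}}-p_{Y_N^k}|\,\ell(dy)\to0$ and $|h(Y_{N_k})-h(Y_N^k)|\to0$, the transfer of independent inputs back and forth between $N_k$ and $N$, which forces the independent-case suprema to converge and hence $C(N;G,\alpha)=\lim_k C(N_k;G,\alpha)$.

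The hard part will be this last limiting step. Passing to the limit in $C(N_k;G,\alpha)=h(Y^\dag_{k,\alpha^\ddag(k,\alpha)})-h(N_k)$ requires controlling the sequence $\beta_k\doteq\alpha^\ddag(k,\alpha)<\alpha$: one must rule out escape toward the boundary $\alpha_0$ of $\I_G$, extract a limit $\beta^\ast\le\alpha$, and identify $h(Y^\dag_{k,\beta_k})\to h(Y^\dag_{\beta^\ast})$ using the uniform convergence of $h(Y^\dag_{k,\cdot})$ together with the strict monotonicity and continuity in the isoperimetric parameter established in Section \ref{main}; that same strict monotonicity of $\beta\mapsto h(Y^\dag_\beta)$ pins $\alpha^\ddag(\alpha)\doteq\beta^\ast$ uniquely, independently of the subsequence. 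The genuinely new phenomenon to handle is that the limiting $\alpha^\ddag(\alpha)$ may now satisfy $\alpha^\ddag(\alpha)=\alpha$ rather than the strict inequality of the achievable cases: this is precisely the degenerate boundary at which the transferred independent inputs $X_N^k$ cease to converge to an admissible independent input for $N$, equivalently at which $\X_{\alpha^\ddag(\alpha)}$ fails to admit a positive definite continuous extension from $S_N$ to $\R$, so that $C(N;G,\alpha)=h(Y^\dag_{\alpha^\ddag(\alpha)})-h(N)$ is realized only as a limit of achievable capacities and is itself not achievable. Establishing uniqueness of $\alpha^\ddag(\alpha)$ and the non-achievability characterization at this boundary, consistently with the earlier uniqueness results, is where the main care is required.
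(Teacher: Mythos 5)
Your proposal follows essentially the same route as the paper: the direct method with weak compactness in the weighted Sobolev space $H^1(\Lambda_N)$ driven by the quadratic lower bound on $G$ (the paper makes the coercivity, Banach--Saks/Fatou weak closedness of the constraint set, preservation of $Dg\ge-1$ under weak limits, and weak upper semicontinuity from concavity explicit, where you compress them into ``standard weak-compactness arguments''), then Lemma \ref{lemma2} plus Corollary \ref{cor1} for the three achievable cases, and the Lemma \ref{lemma1} approximation with the back-and-forth transfer of independent inputs and a contradiction against the optimality of $h(Y^\dag_{k,\alpha^\ddag(k,\alpha)})$ for the limiting identity $C(N;G,\alpha)=\lim_k C(N_k;G,\alpha)$. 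The subtleties you flag in the last paragraph (convergence of $\alpha^\ddag(k,\alpha)$, the possible degeneracy $\alpha^\ddag(\alpha)=\alpha$, and non-achievability at that boundary) are exactly the points the paper handles via the uniform convergence on compact subintervals of $\I_G$ and the strict monotonicity of the optimum in the isoperimetric parameter, so the outline is faithful to the paper's proof.
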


Before giving the proof, we make the following remark.

\begin{remark}
Note when the capacity is not achievable, that is when
$$
C(N;G,\alpha)=\sup\{h(Y):Y\in\Sigma_\bot(N;G,\beta)\}-h(N)
$$
but with
$$
\sup\{h(Y):Y\in\Sigma_\bot(N;G,\beta)\}
$$
not being attainable (not a true maximum), then, though a supremum can always be properly approximated, the theoretical relevance of the approximation scheme stated in Theorem \ref{thm3}, and based in turn in Lemmas \ref{lemma1} and \ref{lemma2} and their corresponding proofs, relies in that it provides a systematic way of construction of such approximation for any noise RV $N$, as well as providing an approximation via capacity attainable channels.
\end{remark}

\begin{proof}
With $\Omega_{\nll,\nuu}\doteq(\nll,\nuu)$, we will write
$$
L^2(\Lambda_N)=L^2(\Omega_{\nll,\nuu},\B(\Omega_{\nll,\nuu}),\Lambda_N)
$$
for the corresponding space of square integrable functions, and $H^1(\Lambda_N)$ for the corresponding Sobolev space of order $(1,2)$ in $\Omega_{\nll,\nuu}$ and w.r.t. the measure $\Lambda_N$, that is
$$
H^1(\Lambda_N)=W^{(1,2)}(\Omega_{\nll,\nuu},\Lambda_N)
$$
where $W^{(1,2)}(\Omega_{\nll,\nuu},\Lambda_N)$ is the cited Sobolev space,
$$
W^{(1,2)}(\Omega_{\nll,\nuu},\Lambda_N)\doteq\left\{g\in L^2(\Lambda_N):Dg\in L^2(\Lambda_N)\right\}
$$
with $Dg$ denoting the weak derivative of $g$. Writing $L^1(\Omega_{\nll,\nuu})$ for the space of integrable functions
$$
L^1(\Omega_{\nll,\nuu},\B(\Omega_{\nll,\nuu}),\ell)
$$
and $L^1_{loc}(\Omega_{\nll,\nuu})$ for its corresponding localized version (that is, the collection of all functions belonging to $L^1(K)$ for all compact $K\subseteq\Omega_{\nll,\nuu}$), we recall that $v\in L^1_{loc}(\Omega_{\nll,\nuu})$ is the weak derivative (unique up to a Lebesgue null set) of $u\in L^1_{loc}(\Omega_{\nll,\nuu})$, writen $v=Du$, if
$$
\int_{\Omega_{\nll,\nuu}}u\phi'\ell(dn)=-\int_{\Omega_{\nll,\nuu}}v\phi\ell(dn)
$$
for all $\phi\in\mathcal{C}_0^{\infty}(\Omega_{\nll,\nuu})$, the collection of infinitely differentiable functions with compact support in $\Omega_{\nll,\nuu}$. Note since $1/p_N\in L^1_{loc}(\Omega_{\nll,\nuu})$, then $g\in L^2(\Lambda_N)$ implies $g\in L^1_{loc}(\Omega_{\nll,\nuu})$. Indeed, by H\"{o}lder's inequality and for compact $K\subseteq\Omega_{\nll,\nuu}$,
$$
\int_K|g|\ell(dn)\leq||g||_{L^2(\Lambda_N)}\left[\int_K\frac{\ell(dn)}{p_N}\right]^\frac{1}{2}<\infty
$$
for $g\in L^2(\Lambda_N)$, with $||\cdot||_{L^2(\Lambda_N)}$ denoting the usual norm in the Banach space $L^2(\Lambda_N)$.

It is a known fact that the space $H^1(\Lambda_N)$ is a Banach space when endowed with the norm
$$
||g||_{H^1(\Lambda_N)}\doteq\left[||g||_{L^2(\Lambda_N)}^2+||Dg||_{L^2(\Lambda_N)}^2\right]^\frac{1}{2},
$$
and in fact a Hilbert space when endowed with the inner product
\begin{equation*}
<g_1,g_2>_{H^1(\Lambda_N)}\doteq<g_1,g_2>_{L^2(\Lambda_N)}+<Dg_1,Dg_2>_{L^2(\Lambda_N)}
\end{equation*}
with $<\cdot,\cdot>_{L^2(\Lambda_N)}$ denoting the usual inner product in the Hilbert space $L^2(\Lambda_N)$.

Now we may proceed with the proof. We fix throughout an $\alpha\in \I_G$ and set
$$
\mathcal{D}_\alpha\doteq\left\{g\in H^1(\Lambda_N):\int_{\Omega_{\nll,\nuu}}G(g)\Lambda_N(dn)\leq\alpha\right\}.
$$
We will first establish that $\mathcal{D}_\alpha$ is indeed weakly compact, that is, that for every sequence $\{g_i\}_{i\geq 0}\subseteq\mathcal{D}_\alpha$, there exists a corresponding subsequence $\{g_{i_k}\}_{k\geq 0}$ and a $g\in\mathcal{D}_\alpha$, such that
$$
g_{i_k}\arrowwh g
$$
as $k\rightarrow\infty$, with $\arrowwh$ denoting weak convergence in $H^1(\Lambda_N)$, that is, with
$$
g_{i_k}\arrowwl2 g
$$
and
$$
Dg_{i_k}\arrowwl2 Dg
$$
as $k\rightarrow\infty$, with $\arrowwl2$ denoting in turn weak convergence in $L^2(\Lambda_N)$. Recall for a sequence $\{f_i\}_{i\geq 0}\subseteq L^2(\Lambda_N)$ and $f\in L^2(\Lambda_N)$ we have
$$
f_{i}\arrowwl2 f
$$
if
$$
\int_{\Omega_{\nll,\nuu}}f_i\phi\Lambda_N(dn)\rightarrow\int_{\Omega_{\nll,\nuu}}f\phi\Lambda_N(dn)
$$
for all $\phi\in L^2(\Lambda_N)$ as $i\rightarrow\infty$ (being $L^2(\Lambda_N)$ its self-dual space). Let then a sequence $\{g_i\}_{i\geq 0}\subseteq\mathcal{D}_\alpha$ be given. Note since there exist $p,q,r\in\R$, $p>0$, such that
$$
px^2+qx+r\leq G(x)
$$
for all $x\in\R$, then there also exist $p^*,r^*\in\R$, $p^*>0$, such that
$$
p^*x^2+r^*\leq G(x)
$$
for all $x\in\R$ as well. Therefore we have
$$
p^*\int_{\Omega_{\nll,\nuu}}g_{i}^2\Lambda_N(dn)+r^*\leq
\int_{\Omega_{\nll,\nuu}}G(g_{i})\Lambda_N(dn)\leq\alpha,
$$
for all $i\geq 0$, and thus the sequence $\{g_{i}\}_{i\geq 0}$ is bounded in $L^2(\Lambda_N)$, that is, there exists $M\in(0,\infty)$ such that
$$
||g_{i}||_{L^2(\Lambda_N)}\leq M
$$
for all $i\geq 0$. Therefore we conclude there exists a subsequence, $\{g_{i_k}\}_{k\geq 0}$, and a $g\in L^2(\Lambda_N)$ such that
$$
g_{i_k}\arrowwl2 g
$$
as $k\rightarrow\infty$. Hence, by Banach-Saks theorem, there in turn exists a subsequence, which for simplicity we still denote as $\{g_{i_k}\}_{k\geq 0}$, such that its Ces\`{a}ro means
$$
\overline{g}_j\doteq\frac{1}{j}\sum_{k=0}^{j-1}g_{i_k},
$$
$j\geq 1$, converges to $g$ strongly in $L^2(\Lambda_N)$, that is,
$$
||\overline{g}_j-g||_{L^2(\Lambda_N)}\rightarrow 0
$$
as $j\rightarrow\infty$. Note from the convexity of $G$ we have
\begin{equation*}
\int_{\Omega_{\nll,\nuu}}G(\overline{g}_j)\Lambda_N(dn)
=\int_{\Omega_{\nll,\nuu}}G\left(\frac{1}{j}\sum_{k=0}^{j-1}g_{i_k}\right)\Lambda_N(dn)
\leq\frac{1}{j}\sum_{k=0}^{j-1}\int_{\Omega_{\nll,\nuu}}G(g_{i_k})\Lambda_N(dn)\leq\frac{1}{j}\sum_{k=0}^{j-1}\alpha=\alpha,
\end{equation*}
that is,
$$
\int_{\Omega_{\nll,\nuu}}G(\overline{g}_j)\Lambda_N(dn)\leq\alpha
$$
for all $j\geq 1$. Now, given strong convergence
$$
||\overline{g}_j-g||_{L^2(\Lambda_N)}\rightarrow 0
$$
as $j\rightarrow\infty$, we conclude that there exists a subsequence $\{\overline{g}_{j_k}\}_{k\geq 0}\subseteq\{\overline{g}_{j}\}_{j\geq 0}$ such that
$$
\overline{g}_{j_k}(n)\rightarrow g(n)
$$
for $\ell$-almost every $n\in\Omega_{\nll,\nuu}$, and therefore, from the continuity of $G$, that
$$
G(\overline{g}_{j_k}(n))\rightarrow G(g(n))
$$
for $\ell$-almost every $n\in\Omega_{\nll,\nuu}$ as well. Since $G$ is bounded below, say by $-R$ with $R\in(0,\infty)$, and
$$
\int_{\Omega_{\nll,\nuu}}R\Lambda_N(dn)=R,
$$
then from Fatou's lemma we conclude
$$
\int_{\Omega_{\nll,\nuu}}\liminf_{k\rightarrow\infty}G(\overline{g}_{j_k})\Lambda_N(dn)\leq\liminf_{k\rightarrow\infty}
\int_{\Omega_{\nll,\nuu}}G(\overline{g}_{j_k})\Lambda_N(dn)
$$
and therefore, since
$$
\int_{\Omega_{\nll,\nuu}}G(\overline{g}_j)\Lambda_N(dn)\leq\alpha
$$
for all $j\geq 1$, and
$$
G(\overline{g}_{j_k}(n))\rightarrow G(g(n))
$$
for $\ell$-almost every $n\in\Omega_{\nll,\nuu}$ (and hence for $\Lambda_N$-almost every $n\in\Omega_{\nll,\nuu}$, being $\Lambda_N$ and $\ell$ equivalent in $\B(\Omega_{\nll,\nuu})$), we conclude
$$
\int_{\Omega_{\nll,\nuu}}G(g)\Lambda_N(dn)\leq\alpha.
$$
Thus, for the given sequence $\{g_i\}_{i\geq 0}\subseteq\mathcal{D}_\alpha$, there exists $\{g_{i_k}\}_{k\geq 0}\subseteq\{g_i\}_{i\geq 0}$ and $g\in L^2(\Lambda_N)$ such that
$$
g_{i_k}\arrowwl2 g
$$
as $k\rightarrow\infty$ and
$$
\int_{\Omega_{\nll,\nuu}}G(g)\Lambda_N(dn)\leq\alpha.
$$
To proceed further, note by the weak convergence of $g_{i_k}$ to $g$ in $L^2(\Lambda_N)$ we have
$$
\int_{\Omega_{\nll,\nuu}}g_{i_k}\phi\Lambda_N(dn)\rightarrow\int_{\Omega_{\nll,\nuu}}g\phi\Lambda_N(dn)
$$
as $k\rightarrow\infty$ for all $\phi\in L^2(\Lambda_N)$. Moreover, since each $g_{i_k}$ is weakly differentiable, we have
$$
\int_{\Omega_{\nll,\nuu}}g_{i_k}\phi'\ell(dn)=-\int_{\Omega_{\nll,\nuu}}\phi Dg_{i_k}\ell(dn)
$$
for all $k\geq 0$ and all $\phi\in\mathcal{C}_0^{\infty}(\Omega_{\nll,\nuu})$. Now, let $\phi\in\mathcal{C}_0^{\infty}(\Omega_{\nll,\nuu})$ and note that, since $\phi'$ is of compact support contained in $\Omega_{\nll,\nuu}$ and $p_N>0$ and continuous in $\Omega_{\nll,\nuu}$, we have
$$
\frac{\phi'}{p_N}\in L^2(\Lambda_N),
$$
and therefore
\begin{equation*}
\int_{\Omega_{\nll,\nuu}}g_{i_k}\phi'\ell(dn)=\int_{\Omega_{\nll,\nuu}}g_{i_k}\frac{\phi'}{p_N}\Lambda_N(dn)
\rightarrow\int_{\Omega_{\nll,\nuu}}g\frac{\phi'}{p_N}\Lambda_N(dn)=\int_{\Omega_{\nll,\nuu}}g\phi'\ell(dn)
\end{equation*}
as $k\rightarrow\infty$. Hence
\begin{equation*}
\int_{\Omega_{\nll,\nuu}}g\phi'\ell(dn)=\lim_{k\rightarrow\infty}\int_{\Omega_{\nll,\nuu}}g_{i_k}\phi'\ell(dn)
=-\lim_{k\rightarrow\infty}\int_{\Omega_{\nll,\nuu}}\phi Dg_{i_k}\ell(dn),
\end{equation*}
from where we conclude, and since H\"{o}lder's inequality shows
$$
\int_{\Omega_{\nll,\nuu}}|g\phi'|\ell(dn)
\leq||g||_{L^2(\Lambda_N)}\left|\left|\frac{\phi'}{p_N}\right|\right|_{L^2(\Lambda_N)}<\infty,
$$
that
$$
\lim_{k\rightarrow\infty}\int_{\Omega_{\nll,\nuu}}\phi(Dg_{i_{k+j}}-Dg_{i_k})\ell(dn)=0
$$
for all $j\geq 0$ and all $\phi\in\mathcal{C}_0^{\infty}(\Omega_{\nll,\nuu})$. But, since $\mathcal{C}_0^{\infty}(\Omega_{\nll,\nuu})$ is dense in $L^2(\Lambda_N)$, the above equation also holds for all $\phi\in L^2(\Lambda_N)$, and therefore in particular for all $\phi\in\mathcal{C}_0^{1}(\Omega_{\nll,\nuu})$, the collection of continuously differentiable functions with compact support in $\Omega_{\nll,\nuu}$. Then, we have
\begin{equation*}
\lim_{k\rightarrow\infty}\int_{\Omega_{\nll,\nuu}}\phi(Dg_{i_{k+j}}-Dg_{i_k})\ell(dn)
=\lim_{k\rightarrow\infty}\int_{\Omega_{\nll,\nuu}}\frac{\phi}{p_N}(Dg_{i_{k+j}}-Dg_{i_k})\Lambda_N(dn)=0
\end{equation*}
for all $\phi\in\mathcal{C}_0^{1}(\Omega_{\nll,\nuu})$, and then, since $\phi p_N$ is again an arbitrary function in $\mathcal{C}_0^{1}(\Omega_{\nll,\nuu})$, that
$$
\lim_{k\rightarrow\infty}\int_{\Omega_{\nll,\nuu}}\phi(Dg_{i_{k+j}}-Dg_{i_k})\Lambda_N(dn)=0
$$
for all $\phi\in\mathcal{C}_0^{1}(\Omega_{\nll,\nuu})$. But, $\mathcal{C}_0^1(\Omega_{\nll,\nuu})$ is also dense in $L^2(\Lambda_N)$, and therefore we have that the last equation holds for all $\phi\in L^2(\Lambda_N)$, that is,
$$
\lim_{k\rightarrow\infty}|<\phi,Dg_{i_{k+j}}>_{L^2(\Lambda_N)}-<\phi,Dg_{i_k}>_{L^2(\Lambda_N)}|=0
$$
for all $j\geq 0$ and all $\phi\in L^2(\Lambda_N)$. Hence, the sequence $\{Dg_{i_k}\}_{k\geq 0}$ is a weak Cauchy sequence in $L^2(\Lambda_N)$, and therefore, since $L^2(\Lambda_N)$ is a Hilbert space, it is also bounded in $L^2(\Lambda_N)$, that is, there exists $B\in (0,\infty)$ such that
$$
||Dg_{i_k}||_{L^2(\Lambda_N)}\leq B
$$
for all $k\geq 0$, which in turn implies the existence of a subsequence, still denoted as $\{Dg_{i_k}\}_{k\geq 0}$, and a $g^*\in L^2(\Lambda_N)$ such that
$$
Dg_{i_k}\arrowwl2 g^*
$$
as $k\rightarrow\infty$.
Now,
$$
\int_{\Omega_{\nll,\nuu}}g_{i_k}\phi'\ell(dn)=-\int_{\Omega_{\nll,\nuu}}\phi Dg_{i_k}\ell(dn),
$$
that is,
$$
\int_{\Omega_{\nll,\nuu}}g_{i_k}\frac{\phi'}{p_N}\Lambda_N(dn)=-\int_{\Omega_{\nll,\nuu}}\frac{\phi}{p_N} Dg_{i_k}\Lambda_N(dn)
$$
for all $\phi\in\mathcal{C}_0^{\infty}(\Omega_{\nll,\nuu})$. But, by the same arguments as for $\phi'/p_N$, for $\phi\in\mathcal{C}_0^{\infty}(\Omega_{\nll,\nuu})$ we also have
$$
\frac{\phi}{p_N}\in L^2(\Lambda_N),
$$
and therefore we may write ($g_{i_k}\arrowwl2 g$)
\begin{equation*}
\int_{\Omega_{\nll,\nuu}}g_{i_k}\frac{\phi'}{p_N}\Lambda_N(dn)\rightarrow
\int_{\Omega_{\nll,\nuu}}g\frac{\phi'}{p_N}\Lambda_N(dn)=\int_{\Omega_{\nll,\nuu}}g\phi'\ell(dn)
\end{equation*}
and ($Dg_{i_k}\arrowwl2 g^*$)
\begin{equation*}
\int_{\Omega_{\nll,\nuu}}\frac{\phi}{p_N}Dg_{i_k}\Lambda_N(dn)\rightarrow
\int_{\Omega_{\nll,\nuu}}\frac{\phi}{p_N}g^*\Lambda_N(dn)=\int_{\Omega_{\nll,\nuu}}\phi g^*\ell(dn),
\end{equation*}
both as $k\rightarrow\infty$, to conclude
$$
\int_{\Omega_{\nll,\nuu}}g\phi'\ell(dn)=-\int_{\Omega_{\nll,\nuu}}\phi g^*\ell(dn),
$$
for all $\phi\in\mathcal{C}_0^{\infty}(\Omega_{\nll,\nuu})$, that is, $g$ is weakly differentiable with, up to a Lebesgue null set of course,
$$
Dg=g^*.
$$
Therefore, we have proved that for every sequence $\{g_i\}_{i\geq 0}\subseteq\mathcal{D}_\alpha$, there exists a corresponding subsequence $\{g_{i_k}\}_{k\geq 0}$ and a $g\in\mathcal{D}_\alpha$, such that
$$
g_{i_k}\arrowwh g
$$
as $k\rightarrow\infty$, that is, $\mathcal{D}_\alpha$ is weakly compact, as claimed. Note then, since $\mathcal{D}_\alpha\subseteq H^1(\Lambda_N)$ is weakly compact and $H^1(\Lambda_N)$ is a Hilbert space, we conclude that $\mathcal{D}_\alpha$ is indeed bounded \footnote{It is then not only bounded but also weakly closed, meaning that every weakly convergent sequence contained in $\mathcal{D}_\alpha$ weakly converges in fact to a point in $\mathcal{D}_\alpha$.}, that is, there exists $L\in(0,\infty)$ such that
$$
||g||_{H^1(\Lambda_N)}\leq L
$$
for all $g\in\mathcal{D}_\alpha$.

To proceed further, we now consider the entropy functional
$$
\mathcal{H}_N[g]=h(N)+\int_{\Omega_{\nll,\nuu}}\ln(1+Dg)\Lambda_N(dn)
$$
over the class of functions $\mathcal{D}_{\alpha}^*$ consisting of all functions $g\in\mathcal{D}_\alpha$ whose weak derivatives $Dg$ satisfy $Dg\geq-1$ $\ell$-almost everywhere in $\Omega_{\nll,\nuu}$ (equivalently, $\Lambda_N$-almost everywhere in $\Omega_{\nll,\nuu}$). Then, since $\ln(1+x)\leq x$ for all $x\geq -1$ (recall we set $\ln x=-\infty$ for $x\leq0$ and $0[\pm\infty]=0$), it is easy to see that
\begin{equation*}
\int_{\Omega_{\nll,\nuu}}\ln(1+Dg)\Lambda_N(dn)\leq\int_{\Omega_{\nll,\nuu}}Dg\Lambda_N(dn)
\leq\int_{\Omega_{\nll,\nuu}}|Dg|\Lambda_N(dn)\leq ||Dg||_{L^2(\Lambda_N)}
\leq||g||_{H^1(\Lambda_N)},
\end{equation*}
where we have used H\"{o}lder's inequality, and therefore that
$$
\int_{\Omega_{\nll,\nuu}}\ln(1+Dg)\Lambda_N(dn)\leq L
$$
for all $g\in\mathcal{D}_\alpha^*$. Hence, $\mathcal{H}_N$ is bounded above in $\mathcal{D}_\alpha^*$. Set
$$
\mathcal{H}_N^*\doteq\sup_{g\in\mathcal{D}_\alpha^*}\mathcal{H}_N[g],
$$
and consider $\{g_i\}_{i\geq 0}\subseteq\mathcal{D}_\alpha^*$ such that
$$
\mathcal{H}_N[g_i]\rightarrow\mathcal{H}_N^*
$$
as $i\rightarrow\infty$. Since $\{g_i\}_{i\geq 0}\subseteq\mathcal{D}_\alpha^*\subseteq\mathcal{D}_\alpha$ and $\mathcal{D}_\alpha$ is weakly compact, we conclude that there exists a corresponding subsequence $\{g_{i_k}\}_{k\geq 0}$ and a $g^*\in\mathcal{D}_\alpha$ such that
$$
g_{i_k}\arrowwh g^*
$$
as $k\rightarrow\infty$. That indeed $g^*\in\mathcal{D}_\alpha^*$ can be seen as follows. Consider the indicator function
$\mathbbm{1}\{n\in\Omega_{\nll,\nuu}: Dg^*(n)\leq-1\}$, denoted for simplicity as $\mathbbm{1}\{Dg^*\leq-1\}$, and note since obviously
$$
\mathbbm{1}\{Dg^*\leq-1\}\in L^2(\Lambda_N),
$$
the weak convergence $Dg_{i_k}\arrowwl2 Dg^*$ shows then that
\begin{equation*}
\int_{\Omega_{\nll,\nuu}}Dg_{i_k}\mathbbm{1}\{Dg^*\leq-1\}\Lambda_N(dn)\rightarrow
\int_{\Omega_{\nll,\nuu}}Dg^*\mathbbm{1}\{Dg^*\leq-1\}\Lambda_N(dn)
\end{equation*}
as $k\rightarrow\infty$. Moreover, since $Dg_{i_k}\geq-1$ $\Lambda_N$-almost everywhere in $\Omega_{\nll,\nuu}$ for each $k\geq 0$, we have
$$
-\mathbbm{1}\{Dg^*\leq-1\}\leq Dg_{i_k}\mathbbm{1}\{Dg^*\leq-1\},
$$
$\Lambda_N$-almost everywhere in $\Omega_{\nll,\nuu}$ and for each $k\geq 0$ as well, and therefore, with
$$
\mu\doteq\Lambda_N(\{n\in\Omega_{\nll,\nuu}: Dg^*(n)\leq-1\}),
$$
we have
\begin{equation*}
-\mu\leq\int_{\Omega_{\nll,\nuu}}Dg_{i_k}\mathbbm{1}\{Dg^*\leq-1\}\Lambda_N(dn)
\rightarrow
\int_{\Omega_{\nll,\nuu}}Dg^*\mathbbm{1}\{Dg^*\leq-1\}\Lambda_N(dn)\leq-\mu,
\end{equation*}
that is
$$
\int_{\Omega_{\nll,\nuu}}Dg^*\mathbbm{1}\{Dg^*\leq-1\}\Lambda_N(dn)=-\mu,
$$
and hence in fact $Dg^*\geq -1$ $\Lambda_N$-almost everywhere in $\Omega_{\nll,\nuu}$. Thus, we conclude $g^*\in\mathcal{D}_\alpha^*$. Moreover, from the concavity of $\ln$, and therefore of $\mathcal{H}_N$, we have that $\mathcal{H}_N$ is indeed weakly upper semi-continuous, that is,
$$
\mathcal{H}_N[f]\geq\limsup_{i\rightarrow\infty}\mathcal{H}_N[f_i]
$$
for all $\{f_i\}\subseteq\mathcal{D}_\alpha^*$ converging weakly in $H^1(\Lambda_N)$ to $f\in\mathcal{D}_\alpha^*$, and therefore we may write
$$
\mathcal{H}_N^*\geq\mathcal{H}_N[g^*]\geq\limsup_{k\rightarrow\infty}\mathcal{H}_N[g_{i_k}]=\mathcal{H}_N^*,
$$
from where we conclude that the supremum is indeed attainable in $\mathcal{D}_\alpha^*$, that is, it is a true maximum,
$$
\sup_{g\in\mathcal{D}_\alpha^*}\mathcal{H}_N[g]=\max_{g\in\mathcal{D}_\alpha^*}\mathcal{H}_N[g]=\mathcal{H}_N[g^*].
$$
Now, since
$$
g^*\in W^{(1,p)}(\Omega_{\nll,\nuu},\Lambda_N)\doteq\left\{g\in L^p(\Lambda_N):Dg\in L^p(\Lambda_N)\right\}
$$
with $p=2$ and $\Omega_{\nll,\nuu}\subseteq\R^n$ with $n=1$, and therefore $n<p$, we conclude that $g^*$ is indeed differentiable $\ell$-almost everywhere in $\Omega_{\nll,\nuu}$, and that its derivative $(g^*)'$ satisfies
$$
Dg^*=(g^*)'
$$
$\ell$-almost everywhere in $\Omega_{\nll,\nuu}$ as well. Therefore we may rewrite the optimum value of the entropy functional as
$$
\mathcal{H}_N[g^*]=h(N)+\int_{\Omega_{\nll,\nuu}}\ln(1+(g^*)')\Lambda_N(dn),
$$
the arguments leading to Theorem \ref{thm1} in Section \ref{main} then showing that in fact the optimum function $g^*\in\C^2(\Omega_{\nll,\nuu})$.

Now, as for the claimed capacity achievability either when $p_N$ is nonconstant and of compact support ($|\nll|+|\nuu|<\infty$), when $p_N$ is non-increasing and of semi-infinite support ($|\nll|<\infty$, $\nuu=\infty$), or when $p_N$ is non-decreasing and of semi-infinite support as well ($\nll=-\infty$, $|\nuu|<\infty$), note it follows directly from the arguments leading to Corollary \ref{cor1} in Section \ref{main} and Lemma \ref{lemma2}.

Finally we prove the last assertion of the theorem, regarding the approximation
$$
C(N;G,\alpha)=\lim_{k\rightarrow\infty}C(N_k;G,\alpha)
$$
when achievability is not a priori guaranteed. For that purpose, consider $N\in\N(\nll,\nuu)$ when either $|\nll|+|\nuu|=\infty$ or $p_N$ constant in $(\nll,\nuu)$, and the corresponding sequence of approximating noise RVs $\{N_k\}_{k\geq 0}$, as introduced in Lemma \ref{lemma1}. We have already proved that for each $\alpha\in \I_G$ the optimums exist
\begin{equation*}
\sup\{h(Y):Y\in\Sigma(N;G,\alpha)\}=\max\{h(Y):Y\in\Sigma(N;G,\alpha)\}=h(Y_\alpha^\dag)
\end{equation*}
and
\begin{equation*}
\sup\{h(Y):Y\in\Sigma(N_k;G,\alpha)\}=\max\{h(Y):Y\in\Sigma(N_k;G,\alpha)\}=h(Y_{k,\alpha}^\dag),\;k\geq 0,
\end{equation*}
where we may write
$$
Y_\alpha^\dag=X_\alpha^\dag+N\;\text{with }X_\alpha^\dag=g_\alpha(N)
$$
and, for $k\geq 0$,
$$
Y_{k,\alpha}^\dag=X_{k,\alpha}^\dag+N_k\;\text{with }X_{k,\alpha}^\dag=g_{k,\alpha}(N_k).
$$
We know for each $\alpha\in \I_G$ there exists a sequence of RVs $\{X_{N_k}^\alpha\}_{k\geq 0}$ with each $X_{N_k}^\alpha$ independent of $N_k$ and such that
$$
Y_{k,\alpha}^\dag\eqd X_{N_k}^\alpha+N_k,
$$
and where the law of each $X_{N_k}^\alpha$ is uniquely determined by considering the Fourier transform quotient
$$
\frac{\F[\Lambda_{Y_{k,\alpha}^\dag}]}{\F[\Lambda_{N_k}]}(f),
$$
$f\in\R$, and taking the corresponding inverse Fourier transform. Now, for each $\alpha\in \I_G$ as well, we consider the sequence of RVs $\{X_N^{k,\alpha}\}_{k\geq 0}$ with each $X_N^{k,\alpha}$ independent of $N$ and such that
$$
X_N^{k,\alpha}\eqd X_{N_k}^\alpha.
$$
We set
$$
Y_N^{k,\alpha}\doteq X_N^{k,\alpha}+N.
$$
From the arguments leading to Corollary \ref{cor1}, we know for each $\alpha\in \I_G$ and $k\geq 0$ there exists a unique $\alpha^{\ddag}(k,\alpha)$, $\alpha^{\ddag}(k,\alpha)<\alpha$, such that
$$
\E G(X_{N_k}^{\alpha^\ddag(k,\alpha)})=\alpha
$$
and
$$
C(N_k;G,\alpha)=h(Y_{k,\alpha^\ddag(k,\alpha)}^\dag)-h(N_k)
$$
that is, with the law of each $X_{N_k}^{\alpha^\ddag(k,\alpha)}$ being a corresponding capacity-achieving input distribution. (To be coherent with the statement of the theorem, we could now set
$$
X_{k,\alpha}^\ddag\doteq X_{N_k}^{\alpha^\ddag(k,\alpha)}
$$
and
$$
Y_{k,\alpha}^\ddag\doteq Y_{k,\alpha^\ddag(k,\alpha)}^\dag
$$
for $k\geq 0$ and $\alpha\in \I_G$.) Now, from Lemma \ref{lemma1} we know we have the convergence
$$
|h(Y_N^{k,\alpha^\ddag(k,\alpha)})-h(Y_{k,\alpha^\ddag(k,\alpha)}^\dag)|\rightarrow 0
$$
as $k\rightarrow\infty$ for each $\alpha\in \I_G$. From Lemma \ref{lemma1} we also know
$$
|h(Y_{k,\alpha}^\dag)-h(Y_\alpha^\dag)|\rightarrow 0
$$
with
$$
Y_{k,\alpha}^\dag\arrowd Y_\alpha^\dag
$$
as $k\rightarrow\infty$ for each $\alpha\in \I_G$, and therefore we have that, for each $\alpha\in \I_G$ as well, there exists a unique $\alpha^\ddag(\alpha)\in \I_G$, now with $\alpha^\ddag(\alpha)\leq\alpha$, and such that
$$
\alpha^\ddag(k,\alpha)\rightarrow \alpha^\ddag(\alpha)
$$
as $k\rightarrow\infty$. By considering for each $\alpha\in \I_G$ a compact interval $I_\alpha$ with $\alpha^\ddag(\alpha)\in I_\alpha\subseteq \I_G$, the uniform convergence stated in Lemma \ref{lemma1} then shows that
$$
h(Y_{k,\alpha^\ddag(k,\alpha)}^\dag)\rightarrow h(Y^\dag_{\alpha^\ddag(\alpha)})
$$
and therefore
$$
h(Y_N^{k,\alpha^\ddag(k,\alpha)})\rightarrow h(Y^\dag_{\alpha^\ddag(\alpha)})
$$
too, both as $k\rightarrow\infty$. This last convergence shows that
$$
\sup\{h(Y):Y\in\Sigma_\bot(N;G,\alpha)\}\geq h(Y^\dag_{\alpha^\ddag(\alpha)}).
$$
Let us assume that
$$
\sup\{h(Y):Y\in\Sigma_\bot(N;G,\alpha)\}> h(Y^\dag_{\alpha^\ddag(\alpha)}).
$$
Then, there exists $Y_\alpha^*\in\Sigma_\bot(N;G,\alpha)$, say $Y_\alpha^*=X_\alpha^*+N$, such that
$$
h(Y_\alpha^*)>h(Y^\dag_{\alpha^\ddag(\alpha)}).
$$
Consider then the sequence $\{X_{k,\alpha}^*\}_{k\geq 0}$ with each $X_{k,\alpha}^*$ independent of $N_k$ and
$$
X_{k,\alpha}^*\eqd X_\alpha^*.
$$
For each $k\geq 0$ set
$$
Y_{k,\alpha}^*=X_{k,\alpha}^*+N_k.
$$
The same arguments as in the proof of Lemma \ref{lemma1} show that, since $Y_\alpha^*\in\Sigma_\bot(N;G,\alpha)$, each $Y_{k,\alpha}^*\in\Sigma_\bot(N_k;G,\alpha)$, and therefore Lemma \ref{lemma1} itself that
$$
h(Y_{k,\alpha}^*)\rightarrow h(Y_\alpha^*)
$$
as $k\rightarrow\infty$.
Hence
$$
\lim_{k\rightarrow\infty}h(Y_{k,\alpha}^*)=h(Y_\alpha^*)>h(Y^\dag_{\alpha^\ddag(\alpha)})
=\lim_{k\rightarrow\infty}h(Y_{k,\alpha^\ddag(k,\alpha)}^\dag),
$$
a contradiction since then there exists $k_0\geq 0$ such that, for all $k\geq k_0$ (for just one such $k$ is indeed enough),
$$
h(Y_{k,\alpha}^*)>h(Y_{k,\alpha^\ddag(k,\alpha)}^\dag),
$$
but
$$
h(Y_{k,\alpha^\ddag(k,\alpha)}^\dag)=\sup\{h(Y):Y\in\Sigma_\bot(N_k;G,\alpha)\}
$$
and
$$
Y_{k,\alpha}^*\in\Sigma_\bot(N_k;G,\alpha).
$$
Thus
$$
\sup\{h(Y):Y\in\Sigma_\bot(N;G,\alpha)\}=h(Y^\dag_{\alpha^\ddag(\alpha)}),
$$
and therefore we conclude
\begin{eqnarray*}
C(N;G,\alpha)&=&\sup\{h(Y):Y\in\Sigma_\bot(N;G,\alpha)\}-h(N)\\
&=&h(Y^\dag_{\alpha^\ddag(\alpha)})-h(N)\\
&=&\lim_{k\rightarrow\infty}h(Y_{k,\alpha^\ddag(k,\alpha)}^\dag)-\lim_{k\rightarrow\infty}h(N_k)\\
&=&\lim_{k\rightarrow\infty}\left[h(Y_{k,\alpha^\ddag(k,\alpha)}^\dag)-h(N_k)\right]\\
&=&\lim_{k\rightarrow\infty}C(N_k;G,\alpha),
\end{eqnarray*}
that is,
$$
C(N;G,\alpha)=\lim_{k\rightarrow\infty}C(N_k;G,\alpha).
$$
Finally we note that the above arguments show that the existence of $\alpha^\ddag(\alpha)$ for each $\alpha\in \I_G$ and such that
$$
\sup\{h(Y):Y\in\Sigma_\bot(N;G,\alpha)\}=h(Y^\dag_{\alpha^\ddag(\alpha)}),
$$
that is, such that
$$
C(N;G,\alpha)=\lim_{k\rightarrow\infty}C(N_k;G,\alpha)=h(Y^\dag_{\alpha^\ddag(\alpha)})-h(N),
$$
does not depend on the existence of a positive definite continuous extension of the Fourier transform quotient
$$
\X_{\alpha^\ddag(\alpha)}(f)=\frac{\F[\Lambda_{Y^\dag_{\alpha^\ddag(\alpha)}}]}{\F[\Lambda_N]}(f)
$$
from $S_N$ to the whole of $\R$, even if such an extension does not exist, which, by Bochner's theorem, implies there does not exist a probability measure over $\R$ representing an independent of $N$ RV, satisfying the constraint and generating the distribution of $Y^\dag_{\alpha^\ddag(\alpha)}$.
The theorem is thus proved.
\end{proof}

\section{Some Remarks on Feedback Capacity}
\label{generalcapacity}

In this last section of the paper we briefly describe some applications of our results in the context of feedback capacity. To that end, we consider an additive channel with noise process being an identical distributed sequence of RVs. Specifically, let the noise process $N$ be the sequence $\{N_i\}_{i\geq1}$ with each $N_i\in\N(\nll,\nuu)$ and
$$
N_i\eqd N_1
$$
for all $i\geq 2$, the channel model being
$$
Y_i=X_i+N_i,
$$
$i\geq 1$. Note the noise process $N$ is just assumed to be a sequence of identically distributed RVs, that is, no independence nor strong stationarity is explicitly demanded. We do assume however not only for the corresponding joint differential entropy
$$
h(N_1,\cdots,N_k)
$$
to be well defined in $\R$ for every $k\geq1$, but also for the differential entropy rate
$$
h_{\infty}(N)\doteq\lim_{k\rightarrow\infty}\frac{1}{k}h(N_1,\cdots,N_k)
$$
to exists (as a number in $\R$).

As usual, we will denote by the superscript $k$ the corresponding collection of RVs
$$
N^k=(N_1,\cdots,N_k),
$$
$$
X^k=(X_1,\cdots,X_k)
$$
and
$$
Y^k=(Y_1,\cdots,Y_k),
$$
and the notation used so far is generalized in a straightforward way to write
$$
Y^k=\Sigma(X^k,N^k;G,\beta),
$$
$$
Y^k=\Sigma_{\bot}(X^k,N^k;G,\beta),
$$
$$
\Sigma(N^k;G,\beta)
$$
and
$$
\Sigma_{\bot}(N^k;G,\beta),
$$
where the restriction is understood to be componentwise
$$
\E G(X_i)\leq\beta,
$$
$1\leq i\leq k$, and where, in the independent case ($\Sigma_{\bot}$), independence is demanded between the collections $(X_1,\cdots,X_k)$ and $(N_1,\cdots,N_k)$. Moreover, since we are interested in a feedback setting, and because of causality, we demand in the dependent case ($\Sigma$) for each corresponding $X_i$ to be of the form
$$
X_i=F_i(Y^{i-1},V_i)
$$
($X_1=V_1$) with each $F_i:\R^{i}\rightarrow\R$ Borel-measurable and the process $V$ independent of the noise process $N$. It is important to emphasize that relationships as above take care of causality in time, however they of course do not rule out the stochastic dependence that generally will exist between each pair $X_i$ and $N_i$ (for $i\geq 2$) when the noise process $N$ is not white but colored.

Thinking as usual of the channel input block $X^k$ as conveying a message random variable $W_k$ independent of $N$, we set capacity as the corresponding limit (when it does exist) of the extremal mutual information $I(W_k;Y^k)$,
$$
\lim_{k\rightarrow\infty}\frac{1}{k}\sup \{I(W_k;Y^k)\}=\lim_{k\rightarrow\infty}\frac{1}{k}\sup\{h(Y^k)-h(N^k)\},
$$
with the supremum being taken over the class of all admissible inputs. (Note as pointed out in \cite{CP1989}, in the context of a time-varying additive (colored) Gaussian noise channel with feedback, the corresponding limit above may be avoided by thinking of capacity in bits per transmission if the channel is to be used for the time block $\{1,\cdots,k\}$. However, since the goal of this section is just to show how our results can be generally applied in the context of feedback capacity, we just keep the limit as part of the definitions that follow.) Specifically, we set capacity without feedback, or forward capacity, as
\begin{equation}\nonumber
C(N;G,\beta)\doteq\lim_{k\rightarrow\infty}\frac{1}{k}\left[\sup\{h(Y^k):Y^k\in\Sigma_{\bot}(N^k;G,\beta)\}-h(N^k)\right]
\end{equation}
whenever the above limit exists (in $[0,\infty]$), and capacity with feedback, or backward capacity, as
\begin{equation}\nonumber
C_{FB}(N;G,\beta)\doteq\lim_{k\rightarrow\infty}\frac{1}{k}\left[\sup\{h(Y^k):Y^k\in\Sigma(N^k;G,\beta)\}-h(N^k)\right]
\end{equation}
whenever the above limit exists (in $[0,\infty]$) as well.

Being $N$ and $G$ usually clear from the context, from now on we just correspondingly write $C(\beta)$ and $C_{FB}(\beta)$.

To proceed further, note from the chain rule for differential entropies we get
$$
h(Y^k)=h(Y_1,\cdots,Y_k)\leq h(Y_1)+\cdots+h(Y_k)
$$
for any $Y^k$ belonging to $\Sigma(N^k;G,\beta)$ or $\Sigma_{\bot}(N^k;G,\beta)$, and therefore, assuming the individual optimum entropies exist (e.g., under the hypotheses of Theorem \ref{thm3} in previous section) and by noting that the $\{N_i\}_{i\geq 1}$ are all identically distributed, we may write the following upper bounds for $C(\beta)$ and $C_{FB}(\beta)$ under the noise process $N$
$$
C(\beta)\leq h(Y^\ddag_{\beta})-h_{\infty}(N)
$$
and
$$
C_{FB}(\beta)\leq h(Y^\dag_{\beta})-h_{\infty}(N).
$$
In what follows we set
$$
C^*(\beta)\doteq h(Y^\ddag_{\beta})-h_{\infty}(N)
$$
and
$$
C^*_{FB}(\beta)\doteq h(Y^\dag_{\beta})-h_{\infty}(N)
$$
and briefly explore on the relationship between both, which, though strictly speaking not precise of course (being both just upper bounds), provides some theoretical insight on the potential gain in capacity by the use of feedback.

We begin by noting that
\begin{eqnarray*}
C^*_{FB}(\beta)-C^*(\beta)&=&h(Y^\dag_{\beta})-h(Y^\ddag_{\beta})\\&=&h(Y^\dag_{\beta})-h(Y^\dag_{\beta^\ddag{(\beta)}})
\end{eqnarray*}
where the above difference is of course non-negative ($\beta\geq\beta^\ddag{(\beta)}$), and in fact strictly positive when the optimum entropy
$$
h(Y^\ddag_{\beta})=h(Y^\dag_{\beta^\ddag{(\beta)}})
$$
is achievable ($\beta>\beta^\ddag{(\beta)}$ in that case). It is from this point of view that, as mentioned in Remark \ref{entropygainwithfeedback} in Section \ref{main2}, the difference
$$
h(Y^\dag_{\beta})-h(Y^\ddag_{\beta})=h(Y^\dag_{\beta})-h(Y^\dag_{\beta^\ddag{(\beta)}})
$$
provides an estimate for the gain of entropy at the output, and in turn in capacity, by the use of feedback. Note also for any $\beta\in\I_G$ we have
$$
C^*_{FB}(\beta^\ddag{(\beta)})=C^*(\beta),
$$
and therefore the difference
$$
\beta-\beta^\ddag{(\beta)}
$$
in turn provides an estimate too but of the extra average cost required to compensate for the absence of feedback.

We now briefly review the Gaussian noise case
$$
N_1\sim\mathcal{N}(0,\sigma^2)
$$
(w.l.o.g. we take zero mean, and leave the correlation structure of process $N$ open), with cost function
$$
G(x)=x^2
$$
(power constraint). Then, for any $\beta>0$ we have
$$
h(Y^\dag_{\beta})=\frac{1}{2}\ln 2\pi e(\sigma+\sqrt{\beta})^2,
$$
$$
h(Y^\ddag_{\beta})=h(Y^\dag_{\beta^\ddag{(\beta)}})=\frac{1}{2}\ln 2\pi e(\sigma^2+\beta),
$$
$$
C^*(\beta)=h(Y^\ddag_{\beta})-h_{\infty}(N)=\frac{1}{2}\ln 2\pi e(\sigma^2+\beta)-h_{\infty}(N)
$$
and
$$
C^*_{FB}(\beta)=h(Y^\dag_{\beta})-h_{\infty}(N)=\frac{1}{2}\ln 2\pi e(\sigma+\sqrt{\beta})^2-h_{\infty}(N).
$$
Moreover, in this case it is easy to see that the associated $\beta^\ddag{(\beta)}$ is determined by
$$
\beta=\beta^\ddag{(\beta)}+2\sigma\sqrt{\beta^\ddag{(\beta)}},
$$
and therefore
$$
\beta^\ddag{(\beta)}=\left(\sqrt{\sigma^2+\beta}-\sigma\right)^2.
$$
Hence, $\beta-\beta^\ddag{(\beta)}$, the estimate of the extra average cost required to compensate for the absence of feedback, in this case becomes
$$
\beta-\left(\sqrt{\sigma^2+\beta}-\sigma\right)^2=2\sigma\left(\sqrt{\sigma^2+\beta}-\sigma\right).
$$

By using that $(\sigma+\sqrt{\beta})^2\leq 2(\sigma^2+\beta)$ and considering logarithms in the definition of differential entropies from now on to the base of $2$ instead of $e$, we have
\begin{equation*}
h(Y^\dag_{\beta})-h(Y^\ddag_{\beta})=\frac{1}{2}\log_2 2\pi e(\sigma+\sqrt{\beta})^2-\frac{1}{2}\log_2 2\pi e(\sigma^2+\beta)
\leq\frac{1}{2}\log_2 4\pi e(\sigma^2+\beta)-\frac{1}{2}\log_2 2\pi e(\sigma^2+\beta),
\end{equation*}
that is
$$
h(Y^\dag_{\beta})-h(Y^\ddag_{\beta})\leq\frac{1}{2}\log_2 2=\frac{1}{2},
$$
and therefore we conclude
$$
C^*(\beta)<C^*_{FB}(\beta)\leq C^*(\beta)+\frac{1}{2}\;\;\text{(bits per transmission)},
$$
which resembles the well known result for Gaussian feedback capacity,
\begin{equation}\label{fc1}
C(\beta)<C_{FB}(\beta)\leq C(\beta)+\frac{1}{2}\;\;\text{(bits per transmission)},
\end{equation}
stated in \cite{CP1989} for a time-varying additive Gaussian noise channel with feedback.

Equation (\ref{fc1}) is known to have a refinement, namely
$$
C_{FB}(\beta)\leq C(\gamma\beta)+\frac{1}{2}\log_2\left(1+\frac{1}{\gamma}\right)\;\;\text{(bits per transmission)},
$$
established in \cite{CY1999} for Gaussian feedback capacity and any $\gamma>0$.

Considering $\gamma>0$, in our context we get
\begin{eqnarray*}
C^*_{FB}(\beta)-C^*(\gamma\beta)&=&h(Y_\beta^\dag)-h(Y_{\gamma\beta}^\ddag)\\
&=&\frac{1}{2}\log_2\frac{(\sigma+\sqrt{\beta})^2}{\sigma^2+\gamma\beta}\\
&=&\frac{1}{2}\log_2\left(1+\frac{2\sigma\sqrt{\beta}+\beta-\gamma\beta}{\sigma^2+\gamma\beta}\right).
\end{eqnarray*}
However, since obviously
$$
(\sigma-\gamma\sqrt{\beta})^2\geq 0,
$$
we conclude
$$
\frac{2\sigma\sqrt{\beta}+\beta-\gamma\beta}{\sigma^2+\gamma\beta}\leq\frac{1}{\gamma}
$$
for all $\gamma>0$, and therefore
$$
C^*_{FB}(\beta)\leq C^*(\gamma\beta)+\frac{1}{2}\log_2\left(1+\frac{1}{\gamma}\right)\;\;\text{(bits per transmission)}
$$
for all $\gamma>0$ as well.

Along the previous lines, and in the context of the above additive upper bounds, there is also another well known result for Gaussian feedback capacity, stated in \cite{P1969,E1970}, and establishing the multiplicative upper bound
\begin{equation}\label{fc2}
C_{FB}(\beta)\leq2C(\beta).
\end{equation}

Being a multiplicative relationship, we expect for the bounds $C^*_{FB}(\beta)$ and $C^*(\beta)$ not to fully comply with equation (\ref{fc2}), basically because in $C^*_{FB}(\beta)$,
$$
C^*_{FB}(\beta)=\frac{1}{2}\ln 2\pi e(\sigma+\sqrt{\beta})^2-h_{\infty}(N),
$$
the first term on the right, appearing from the bounding
$$
\frac{\sup\{h(Y^k):Y^k\in\Sigma(N^k;G,\beta)\}}{k}\leq\frac{1}{2}\ln 2\pi e(\sigma+\sqrt{\beta})^2,
$$
does not take into account the correlation structure present in the noise process $N$, and therefore, as $N$ is considered as a whiter and whiter noise process, this component in $C^*_{FB}(\beta)$ remains however unchanged (unlike $C^*(\beta)$, which converges then to the forward capacity $C(\beta)$, this last one being also the limit in this case of the feedback capacity $C_{FB}(\beta)$).

Indeed, since $h_\infty(N)$ does not depend on $\beta$, it is easy to see that
$$
C^*_{FB}(\beta)\leq2C^*(\beta)
$$
if, and only if,
$$
h_\infty(N)\leq\frac{1}{2}\log_2 2\pi e\left[2(2-\sqrt{2})^2\right]\sigma^2,
$$
and therefore enough correlation structure is required on the noise process $N$ so as to lower $h_\infty(N)$ from the white case
$$
h_\infty(N)=h(N_1)=\frac{1}{2}\log_2 2\pi e\sigma^2
$$
to
$$
h_\infty(N)\leq\frac{1}{2}\log_2 2\pi e\left[2(2-\sqrt{2})^2\right]\sigma^2.
$$
Equivalently, we require for $\sigma_\infty^2$, the variance of the error in the best estimate of $N_k$ given the infinite past, to satisfy
$$
\sigma_\infty^2\leq\left[2(2-\sqrt{2})^2\right]\sigma^2,
$$
as
$$
h_\infty(N)=\frac{1}{2}\log_2 2\pi e\sigma_\infty^2.
$$
(See for example \cite{TC1991}.)

It is important to point out however that a multiplicative relationship between $C^*_{FB}(\beta)$ and $C^*(\beta)$, such as
$$
C^*_{FB}(\beta)\leq2C^*(\beta),
$$
is of course of relevance in the context of establishing by how much capacity can be improved by the use of feedback, which in turn requires enough structure on the noise to be taken advantage of by the feedback.

Finally, equation (\ref{fc2}) is also known to have a refinement, namely
$$
C_{FB}(\beta)\leq\left(1+\frac{1}{\gamma}\right)C(\gamma\beta),
$$
established for Gaussian feedback capacity in \cite{CY1999} too for any $\gamma>0$ as well.

The same comments as before being valid, in this case we find
$$
C^*_{FB}(\beta)\leq\left(1+\frac{1}{\gamma}\right)C^*(\gamma\beta)
$$
for all $\gamma>0$ if, and only if, $h_\infty(N)$ is lowered from the white case
$$
h_\infty(N)=h(N_1)=\frac{1}{2}\log_2 2\pi e\sigma^2
$$
to
$$
h_\infty(N)\leq\inf_{\beta>0}\inf_{\gamma>0}\frac{1}{2}\log_22\pi e\frac{(\sigma^2+\gamma\beta)^{\gamma+1}}{(\sigma+\sqrt{\beta})^{2\gamma}}.
$$

\ifCLASSOPTIONcaptionsoff
  \newpage
\fi

\bibliographystyle{IEEEtran}
\bibliography{IEEEfull,mybib}

\end{document}